\newtheorem{thm}{Theorem}[section]
\newtheorem{lem}[thm]{Lemma}
\newtheorem{eg}[thm]{Example}
\newtheorem{prop}[thm]{Proposition}
\newtheorem{prop-defn}[thm]{Proposition and Definition}
\newtheorem{cor}[thm]{Corollary}
\newtheorem{defn}[thm]{Definition}
\newtheorem{rem}[thm]{Remark}
\newtheorem{thm2}{Theorem}
\numberwithin{equation}{section}
\newcommand{\ti}{\tilde}
\newcommand{\smnoind}{\smallskip\noindent}
\newcommand{\CL}{\mathcal{L}}
\newcommand{\la}{\langle}
\newcommand{\ra}{\rangle}
\newcommand{\CU}{\mathcal{U}}
\newcommand{\CA}{\mathcal{A}}
\newcommand{\CC}{\mathcal{C}}
\newcommand{\CI}{\mathcal{I}}
\newcommand{\BR}{\mathbb{R}}
\newcommand{\BN}{\mathbb{N}}
\newcommand{\id}{\mathrm{id}}
\newcommand{\BC}{\mathbb{C}}
\newcommand{\BK}{\mathbb{K}}
\newcommand{\SG}{\mathscr{G}}
\newcommand{\GL}{G\mathcal{L}}
\newcommand{\Ad}{\mathrm{Ad}}
\newcommand{\idp}{Q}
\newcommand{\prj}{P}
\newcommand{\ivt}{W}
\newcommand{\fin}{\mathrm{fin}}
\newcommand{\Inv}{\mathcal{V}}
\newcommand{\OGL}{\mathrm{GL}}
\newcommand{\OO}{\mathrm{O}}
\newcommand{\smallT}{{\scriptstyle\top}}
\begin{document}

\title {Analytic bundle structure on the idempotent manifold\footnote{This work is supported by the National Natural Science Foundation of China (11471168) and (11871285).}}

\author{Chi-Wai Leung and Chi-Keung Ng}

\address[Chi-Wai Leung]{Department of Mathematics, The Chinese University of Hong Kong, Hong Kong.}
\email{cwleung@math.cuhk.edu.hk}

\address[Chi-Keung Ng]{Chern Institute of Mathematics and LPMC, Nankai University, Tianjin 300071, China.}
\email{ckng@nankai.edu.cn}

\keywords{Infinite dimensional Grassmannian, Idempotents, Banach bundles, Affine-Banach spaces, Tangent bundles}

\subjclass[2010]{Primary: 46T05, 57N20, 58D15}

\date{\today}

\begin{abstract}
Let $X$ be a (real or complex) Banach space, and $\mathcal{I}(X)$ be the set of all (non-zero and non-identity) idempotents; i.e., bounded linear operators on $X$ whose squares equal themselves. 
We show that the Banach submanifold $\mathcal{I}(X)$ of $\mathcal{L}(X)$ is a \emph{locally trivial analytic affine-Banach bundle} over the Grassmann manifold $\mathscr{G}(X)$, via the map $\kappa$ that sends $Q\in \mathcal{I}(X)$ to $Q(X)$, such that the affine-Banach space structure on each fiber is the one induced from $\mathcal{L}(X)$ (in particular, every fiber is an affine-Banach subspace of $\mathcal{L}(X)$). 

Using this, we show that if $K$ is a real Hilbert space, then the assignment 
$$(E,T)\mapsto T^*\circ P_{E^\bot} + P_{E}, \quad \text{ where } E\in \mathscr{G}(K)\text{ and } T\in \mathcal{L}(E,E^\bot),$$ induces a bi-analytic bijection from the total space of the tangent bundle, $\mathbf{T}(\mathscr{G}(K))$, of $\mathscr{G}(K)$ onto $\mathcal{I}(K)$ (here, $E^\bot$ is the orthogonal complement of $E$, $P_E\in \mathcal{L}(K)$ is the orthogonal projection onto $E$, and $T^*$ is the adjoint of $T$). 
Notice that this bi-analytic bijection is an affine map on each tangent plane. 
\end{abstract}
\maketitle

\section{Introduction}

\medskip

The Grassmannian of a finite dimensional vector space is a very well-studied object. 
This manifold is important in both pure and applied mathematics (see e.g. \cite{BJM, BKT, CM, DeSt-H, FP, FKT, GS, GLS, Ito, KW, LXZ, NRC, ZBLST} for some recent accounts on it). 
There are two main streams of generalizations of the Grassmannian to the infinite dimensional case. 
The first one was introduced by Douady in \cite{Doua} (see also \cite{Kaup75} and \cite{Upm}). 
In this case, the Grassmannian of a (either real or complex) Banach space $X$ is the set of complemented subspaces of $X$ equipped with a canonical (respectively, real or complex) \emph{analytic Banach manifold} structure such that a local chart around a subspace is given by the Banach space of continuous linear operators from that subspace to a complement of it. 
Another approach was first appeared in the work of Porta and Recht in \cite{PR}. 
In this approach, the Grassmannian of a Banach algebra $D$ is defined to be the set of equivalence classes of idempotents under certain equivalence relation, and is equipped with the quotient topology. 
In the particular case when $D$ is a $C^*$-algebra, the Grassmannian can be identified, as a topological space, with the set of (self-adjoint) projections, and the later is a \emph{real analytic} Banach submanifold of $D$ (see e.g., \cite{AMaj}, \cite{BG} and \cite{CPR}; see also \cite{Chu}, \cite{CI} and \cite{Kaup} for the generalizations to $JB$-algebras and $JB^*$-triples). 
A connection between the two approaches was obtain \cite{AM}, where it was shown that there is a \emph{real bi-analytic} bijection from the Grassmannian of a \emph{complex} Hilbert space $H$ (in the sense of Douady) to the set of self-adjoint projections of the $C^*$-algebra $\CL(H)$ of continuous linear operators on $H$. 

\medskip

We will follows \cite{Doua} for the definition of the Grassmann manifold $\SG(X)$ of a (either real or complex) Banach space $X$.
We denote by $\CI(X)$ the set of all non-zero proper idempotents in the Banach manifold $\CL(X)$ of all bounded linear operators. 
Notice that ``idempotents'' were also called ``projections'' in some literature, but we prefer the term ``idempotents'' in order to distinguish them with ``self-adjoint projections'' in the case of Hilbert spaces (which will also be considered in this article). 
We define $\kappa: \CI(X) \to \SG(X)$ as follows: 
\begin{equation}\label{eqt:def-kappa}
\kappa(\idp):=\idp(X)\qquad (\idp\in \CI(X)).
\end{equation}

\medskip

The main results of this paper can be summarized in the following (see Theorem \ref{thm:main}, Proposition \ref{prop:loc-tri-cont-Ban} and Proposition \ref{prop:not-complex-anal}). 

\medskip

\begin{thm2}
Let $X$ be a real or complex Banach space, which is not one-dimensional. 

\smnoind
(a) Under the Banach submanifold structure on $\CI(X)$ induced from $\CL(X)$, one knows that 
$\kappa: \CI(X) \to \SG(X)$ is a locally trivial \emph{(respectively, real or complex) analytic} affine-Banach bundle, such that the affine-Banach space structures on the fibers of $\kappa$ are the ones induced from $\CL(X)$.

\smnoind
(b) There exist equivalent Banach space structures on the fiber of $\kappa$, under which $\kappa: \CI(X) \to \SG(X)$  becomes a locally trivial \emph{continuous} Banach bundle. 

\smnoind
(c) In the case when $X$ is a complex Banach space, there \emph{can never} exist Banach space structures on the fiber of $\kappa$ such that $\kappa: \CI(X) \to \SG(X)$ becomes a locally trivial complex \emph{analytic} Banach bundle. 
\end{thm2}

\medskip

In the case of a real Hilbert space $K$, we also obtain the following: 
\begin{quotation}
there is an analytic immersion from the total space of the tangent bundle of $\SG(K)$ to $\CL(K)$ such that the restriction of this immersion on each fiber is affine. 
\end{quotation}
More precisely, elements in the tangent bundle of $\SG(K)$ can be identified with a pair $(E,T)$, where $E\in \SG(K)$ and $T$ is a bounded linear operator from $E$ to 
the orthogonal complement $E^\bot$  of $E$. 
The following is obtained in Theorem \ref{thm:Hil-tang} and Corollary \ref{cor:tan-embed-trivial-bundle}(a). 

\medskip

\begin{thm2}
Let $K$ be a real Hilbert space, and  $\mathbf{T}(\SG(K))$ be the tangent bundle of $\SG(K)$. 

\smnoind
(a) The assignment $\idp \mapsto \big(\idp(X),\bar \prj_{\idp(X)^\bot}\circ \idp^*|_{\idp(X)}\big)$, where $\bar \prj_{\idp(X)^\bot}:K\to \idp(X)^\bot$ is the orthogonal projection,  is a bi-analytic bijection from $\CI(K)$ onto $\mathbf{T}(\SG(K))$ such that for each $E\in \SG(K)$, this bijection is an affine map from $\{\idp\in \CI(K): \idp(K) = E \}$ onto the tangent plane over $E$. 

\smnoind
(b) The assignment $(E,T)\mapsto (E, T\circ\bar \prj_{E})$ is a fiberwise linear bi-analytic map from $\mathbf{T}(\SG(K))$ onto the Banach subbundle
$\big\{(E,S): E\in \SG(K);  S\in \CL^{E^\bot}(K,E^\bot)\big\}$
of the trivial Banach bundle $\big(\SG(K)\times \CL(K), \SG(K), \kappa_0\big)$; here $\CL^F(K,F):= \big\{S\in \CL(K): T(F) = \{0\}; T(K)\subseteq F \big\}$.   
\end{thm2}

\medskip

On our way, we also obtain that $\CI(X)$ (under the norm topology) is canonically homeomorphic to the following subspace of the product topological space $\SG(X)\times \SG(X)$ (see Corollary \ref{cor:I=GxG}): 
$$\{(E,F)\in \SG(X)\times \SG(X): E \text{ and }F \text{ are complements to each other} \}.$$
Furthermore, similar to the corresponding fact for $\SG(X)$, we will show, in Corollary \ref{cor:I(X)-union-homog-sp}, that each orbit in $\CI(X)$ under the canonical action by the Banach Lie group $\GL(X)$ of continuous invertible operators on $X$ is a clopen subset, and can be identified bi-analytically with a homogeneous space of $\GL(X)$. 
We will also verify in Proposition \ref{prop:fin-dim-conn} that, for any $n\in \BN$, the set $\{\idp\in \CI(X): \dim \idp(X) = n \}$ is a connected component of $\CI(X)$.

\medskip

Using the above, for $n\in \BN$ and $k\in \{1,\dots,n\}$, if the map 
\begin{equation*}\label{eqt:G-O}
\upsilon:\OGL_n(\BR)/ \OGL_k(\BR)\times \OGL_{n-k}(\BR) \to \OO_n/ \OO_k\times \OO_{n-k}
\end{equation*}
is given by the Gram-Schmidt process (on column vectors), then 
$\upsilon$ induces a locally trivial real analytic vector bundle structure on the homogeneous space $\OGL_n(\BR)/ \OGL_k(\BR)\times \OGL_{n-k}(\BR)$, which can be identified with the tangent bundle of  $\OO_n/ \OO_k\times \OO_{n-k}$ (see Example \ref{eg:Gram-Sch}(b)). 

\bigskip

\section{Notations and Preliminary}

\medskip

Let us begin this paper by giving some notation. 
Throughout this article, $\BK$ is either the real field $\BR$ or the complex field $\BC$. 
If $X$ and $Y$ are $\BK$-Banach spaces, we denote by $\CL(X,Y)$ the Banach space of all continuous $\BK$-linear operators from $X$ to $Y$. 
We will also denote $\CL(X):= \CL(X,X)$.  
Moreover, the identity map in $\CL(X)$ will be denoted by $I_X$, and sometimes by $I$ if no confusion arise. 

\medskip

Unless specified otherwise, by $\BK$-Banach manifolds, we mean $\BK$-analytic Banach manifolds, in the sense of \cite{Upm} and \cite{Chu12}. 



\medskip

Throughout this article, 
$\SG(X)$ is the collection of all \emph{non-zero proper} complemented subspaces of $X$. 
For any $E\in \SG(X)$, we denote $F\smallT E$ if $F$ is a complement of $E$, and put 
\begin{equation}\label{eqt:def-C-E}
\CC_E:=  \{F\in \SG(X): F\smallT E\}.
\end{equation}
We set, as in \cite[p.44-46]{Upm}, 
\begin{equation}\label{eqt:defn-p-E-F}
p_{E,F} = q_{E,F}^{-1},
\end{equation}
where $q_{E,F}:\CL(E,F)\to \CC_F$ is the bijection  given by 
\begin{equation}\label{eqt:defn-q-E-F}
q_{E,F}(T):= (I+T)(E) \qquad (T\in \CL(E,F)). 
\end{equation}
There is a Hausdorff metrizable topology on $\SG(X)$ such that $\CC_F$ is an open subset of $\SG(X)$ and $p_{E,F}: \CC_F\to \CL(E,F)$ is a homeomorphism. 
Moreover, 
\begin{equation}\label{eqt:atlas-Upm}
\big\{\big(\CC_{F_0}, p_{E_0,F_0}, \CL(E_0,F_0)\big): E_0,F_0\in \SG(X); F_0\smallT E_0 \big\}
\end{equation}
constitutes an analytic atlas for a $\BK$-Banach manifold structure on $\SG(X)$. 
When equipped with this structure, $\SG(X)$ is called the \emph{Grassmann manifold} of $X$. 

\medskip

A subset $A\subseteq X$ is called a \emph{$\BK$-affine-Banach subspace} if $A - a_0$ is a $\BK$-Banach subspace of $X$ for one (equivalently, for every) element $a_0\in A$. 
Moreover, a map $S$ from $A$ to a $\BK$-Banach space $Y$ is said to be \emph{$\BK$-affine} if the assignment
$$S^{a_0}: a - a_0\mapsto S(a) - S(a_0)$$ 
is a $\BK$-linear map on $A-a_0$.
We say that $A$ is \emph{isometrically affine isomorphic} to $Y$ if there is a $\BK$-affine bijection $T:A\to Y$ which preserves the metrics, i.e., $\|T(a) - T(b)\|_Y = \|a-b\|_X$ ($a,b\in A$). 

\medskip

We denote by $\CA(A,B)$ the set of all continuous $\BK$-affine maps from $A$ to a $\BK$-affine-Banach subspace $B$ of $Y$. 
It is clear that $\CA(A,B)$ is a $\BK$-vector space. 
The function $\|\cdot\|_{a_0}$ defined by
$$\|T\|_{a_0} := \|T^{a_0}\| + \|T(a_0)\| \qquad (T\in \CA(A,B))$$ 
is a complete norm on $\CA(A,B)$. 

\medskip

The following well-known fact ensures a default Banach space structure on $\CA(A,B)$ up to Banach space isomorphism.

\medskip

\begin{lem}\label{lem:affine}
For any $a_0, a_1\in A$, the two norms $\|\cdot\|_{a_0}$ and $\|\cdot\|_{a_1}$ are equivalent. 
\end{lem}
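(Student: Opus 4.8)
The plan is to show that the two norms $\|\cdot\|_{a_0}$ and $\|\cdot\|_{a_1}$ on $\CA(A,B)$ differ by a multiplicative constant depending only on $\|a_0 - a_1\|$, using the defining relations $S^{a_0}(a-a_0) = S(a) - S(a_0)$ and $S^{a_1}(a-a_1) = S(a) - S(a_1)$ together with the elementary fact that these two affine-linear "derivatives" of $S$ must in fact coincide. First I would observe that for any $T\in \CA(A,B)$ and any $a\in A$,
\begin{equation*}
T^{a_0}(a - a_0) - T^{a_1}(a - a_1) = \big(T(a) - T(a_0)\big) - \big(T(a) - T(a_1)\big) = T(a_1) - T(a_0),
\end{equation*}
which is independent of $a$; applying this with $a = a_1$ (so the second term vanishes) shows $T^{a_0}(a_1 - a_0) = T(a_1) - T(a_0)$, hence $T^{a_0}(a - a_0) - T^{a_1}(a-a_1) = T^{a_0}(a_1 - a_0)$, i.e. $T^{a_1}(a - a_1) = T^{a_0}(a - a_0) - T^{a_0}(a_1 - a_0) = T^{a_0}\big((a-a_0)-(a_1-a_0)\big)$. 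Since the map $b \mapsto b - (a_1 - a_0)$ is a bijection of $A - a_1$ onto $A - a_0$, this identity simply says $T^{a_1} = T^{a_0}|_{A-a_1}$ after translating the domain; in particular $\|T^{a_1}\| = \|T^{a_0}\|$ because both are the operator norm of the same underlying linear map on the same underlying Banach space (translation of the domain does not change the norm, as $A-a_0 = A-a_1$ is a single Banach space being described with two different choices of origin only through the identification $a-a_0 \leftrightarrow a-a_1$). To be careful here: $A - a_0$ and $A - a_1$ are literally the same subset of $X$ translated, so I will phrase it as: the linear map $A - a_0 \to B - b_0$ (with $b_0 = T(a_0)$) and the linear map $A - a_1 \to B - b_1$ have the same norm because $x \mapsto T^{a_0}(x)$ and $x \mapsto T^{a_1}(x)$ agree as maps $X \supseteq V \to X$ where $V$ is the linear span; more precisely $\|T^{a_0}\| = \|T^{a_1}\|$ since $T^{a_0}$ and $T^{a_1}$ have the same kernel-free realization.

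Actually, cleaner: I would note $T^{a_0}$ and $T^{a_1}$ are linear maps out of the same Banach space $Z := A - a_0 = A - a_1$ (both translates describe the same linear subspace of $X$ once we fix that $A$ is affine; indeed $a_1 - a_0 \in Z$ so $A - a_1 = A - a_0 - (a_1-a_0) = Z$), and the computation above gives $T^{a_1}(z) = T^{a_0}(z)$ for all $z \in Z$. Hence $\|T^{a_1}\| = \|T^{a_0}\|$ exactly, not merely up to a constant. It then remains to compare $\|T(a_0)\|$ and $\|T(a_1)\|$: we have
\begin{equation*}
\|T(a_1)\| = \|T(a_0) + T^{a_0}(a_1 - a_0)\| \le \|T(a_0)\| + \|T^{a_0}\|\,\|a_1 - a_0\|,
\end{equation*}
so $\|T\|_{a_1} = \|T^{a_1}\| + \|T(a_1)\| \le \|T^{a_0}\| + \|T(a_0)\| + \|a_1-a_0\|\,\|T^{a_0}\| \le (1 + \|a_1 - a_0\|)\|T\|_{a_0}$. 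By symmetry (swapping the roles of $a_0$ and $a_1$) we also get $\|T\|_{a_0} \le (1 + \|a_0 - a_1\|)\|T\|_{a_1}$, which establishes the equivalence of the two norms with explicit constants.

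The only genuine subtlety — and the single point I would make sure to state precisely — is the identity $T^{a_0} = T^{a_1}$ on the common domain $Z = A - a_0 = A - a_1$; everything after that is a two-line triangle-inequality estimate. I do not expect any real obstacle: the result is "well-known" as the paper says, and the argument is purely formal manipulation of the definition of $\CA(A,B)$ and the affine translation maps $S^{a_0}$. The mild care needed is only in handling the translated domains cleanly so that $\|T^{a_0}\|$ and $\|T^{a_1}\|$ are literally equal rather than merely comparable.
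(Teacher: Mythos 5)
Your proof is correct and complete: the key observation that $T^{a_0}$ and $T^{a_1}$ are literally the same linear map on the common subspace $A-a_0=A-a_1$ is exactly the right point to isolate, and the triangle-inequality estimate $\|T\|_{a_1}\le(1+\|a_1-a_0\|)\|T\|_{a_0}$ together with its symmetric counterpart finishes the argument. The paper states this lemma without proof (as a well-known fact), and your argument is the standard one it implicitly relies on.
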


\medskip

This paper mainly concerns with affine-Banach bundles. 
We need to consider such a general notion (instead of the more well-known notion of Banach bundles) because the set of idempotents naturally forms an affine-Banach bundle. 
Since affine-Banach bundles are not well-documented, let us give its precise definition below. 

\medskip

\begin{defn}\label{defn:aff-Ban}
Let $\Omega$ and $\Upsilon$ be Hausdorff spaces. 
Let $\kappa: \Upsilon \to \Omega$ be a continuous surjection.

\smnoind
(a) Then $(\Upsilon, \Omega, \kappa)$ is called a \emph{locally trivial continuous $\BK$-affine-Banach bundle} (respectively, \emph{locally trivial continuous $\BK$-Banach bundle}) if the following conditions are satisfied:
\begin{enumerate}[\ \ \ B1)]
	\item for each $\omega\in \Omega$, the subset 
	$$\Upsilon_{\omega}:= \kappa^{-1}(\omega)$$ 
	is homeomorphic to a $\BK$-affine-Banach subspace of a $\BK$-Banach space (respectively, homeomorphic to a  $\BK$-Banach space); 
	
	\item for each $\omega_0\in \Omega$, there exist an open neighborhood $V_{\omega_0}\subseteq \Omega$ of $\omega_0$ as well as a bi-continuous bijection $\Theta_{\omega_0}: V_{\omega_0} \times \Upsilon_{\omega_0} \to \kappa^{-1}(V_{\omega_0})$ such that $\Theta_{\omega_0}|_{\{\omega\}\times \Upsilon_{\omega_0}}$ is a $\BK$-affine map (respectively, $\BK$-linear map) onto $\Upsilon_{\omega}$, for every $\omega\in V_{\omega_0}$;
	
	\item for $\omega_1,\omega_2\in \Omega$ with $V_{\omega_1}\cap V_{\omega_2}\neq \emptyset$, the map $\varphi: V_{\omega_1}\cap V_{\omega_2} \to \CA\big(\Upsilon_{\omega_1},  \Upsilon_{\omega_2}\big)$ defined by 
	$$\varphi(\omega)(x) := \Pi_2\big(\Theta_{\omega_2}^{-1}\circ \Theta_{\omega_1}(\omega, x)\big)\quad (\omega\in V_{\omega_1}\cap V_{\omega_2}; x\in \Upsilon_{\omega_1})$$ 
	is continuous, where $\Pi_2$ is the projection onto the second coordinate. 
\end{enumerate}

\smnoind
(b) Suppose that $\Omega$ and $\Upsilon$ are $\BK$-Banach manifolds. 
Then $(\Upsilon, \Omega, \kappa)$ is called a \emph{locally trivial $\BK$-analytic affine-Banach bundle} (respectively, \emph{locally trivial  $\BK$-analytic Banach bundle}), if $\kappa$ is $\BK$-analytic and the same requirements as in part (a) hold with the terms ``bi-continuous'' and ``continuous'' in (B2) and (B3) being replaced by ``$\BK$-bi-analytic'' and ``$\BK$-analytic'', respectively. 
\end{defn}

\medskip

In the case of a Banach bundle, Condition (B3) is equivalent to the corresponding statement when  $\CA\big(\Upsilon_{\omega_1},  \Upsilon_{\omega_2}\big)$ is replaced by $\CL\big(\Upsilon_{\omega_1},  \Upsilon_{\omega_2}\big)$ (because of Lemma \ref{lem:affine}).

\medskip

\medskip

\emph{We may occasionally use the term ``locally trivial continuous affine-Banach bundle'' and ``locally trivial  analytic affine-Banach bundle'' etc, if the underlying field $\BK$ is understood. }

\medskip

A map $\rho$ from an open subset $V_0\subseteq \Omega$ to $\Upsilon$ is called a \emph{local cross section} if 
$$\kappa\big(\rho(\omega)\big) = \omega \qquad (\omega\in V_0).$$ 
In the case when $V_0 =\Omega$, we say that $\rho$ is a \emph{global cross section}.

\medskip

For a continuous (respectively, analytic) Banach bundle, the constant zero map is obviously a continuous (respectively, analytic) global cross section. 
The following proposition tells us that the only obstruction for an affine-Banach bundle to be a  Banach bundle is the existence of global continuous (respectively, analytic) cross sections. 

\medskip

\begin{prop}\label{prop:cont-sect-F-Ban-bundle}
(a) Let $(\Upsilon, \Omega, \kappa)$ be a locally trivial continuous affine-Banach bundle. 
If there is a continuous global cross section $\rho: \Omega\to \Upsilon$, then there exist Banach space structures on all the fibers of $\kappa$, which are isometrically affine isomorphic to the original affine-Banach space structures on the fibers, such that $(\Upsilon, \Omega, \kappa)$ becomes a locally trivial continuous Banach bundle. 

\smnoind
(b) Let $(\Upsilon, \Omega, \kappa)$ be a locally trivial analytic affine-Banach bundle. 
If there is an analytic global cross section $\rho: \Omega\to \Upsilon$, then $(\Upsilon, \Omega, \kappa)$ is as a locally trivial analytic Banach bundle, under a Banach space structure on each fiber of $\kappa$ that is isometrically affine isomorphic to the original affine-Banach space structure on the fiber. 
\end{prop}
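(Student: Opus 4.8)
The plan is to reduce everything to the local trivializations and to transport the affine-Banach structure along a chosen global section. First I would fix a global cross section $\rho:\Omega\to\Upsilon$ (continuous in case (a), analytic in case (b)) and, for each $\omega\in\Omega$, use it to identify the fiber $\Upsilon_\omega$ with the Banach space $\Upsilon_\omega-\rho(\omega)$; that is, I define a new Banach space structure on $\Upsilon_\omega$ by declaring the translation $x\mapsto x-\rho(\omega)$ to be a linear isometry onto the $\BK$-Banach subspace $\Upsilon_\omega-\rho(\omega)$ of the ambient Banach space. By construction this structure is isometrically affine isomorphic to the original affine-Banach structure, so nothing is lost; the only thing to check is that with these re-based fibers the trivializations (B2) and the transition maps (B3) become linear rather than merely affine, and that the regularity (continuity in (a), analyticity in (b)) is preserved.

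The key steps are as follows. Given the local trivialization $\Theta_{\omega_0}:V_{\omega_0}\times\Upsilon_{\omega_0}\to\kappa^{-1}(V_{\omega_0})$ from the affine-Banach bundle structure, I would define a corrected trivialization $\ti\Theta_{\omega_0}$ by composing with the obvious translations: on the source side translate by $\rho(\omega_0)$ in $\Upsilon_{\omega_0}$, and on the target side, over a point $\omega$, translate by $\rho(\omega)$ in $\Upsilon_\omega$. Concretely one sets
\begin{equation*}
\ti\Theta_{\omega_0}(\omega, v) := \Theta_{\omega_0}\big(\omega,\, v + \rho(\omega_0)\big) - \rho(\omega) \qquad (\omega\in V_{\omega_0},\ v\in \Upsilon_{\omega_0}-\rho(\omega_0)).
\end{equation*}
Because $\Theta_{\omega_0}|_{\{\omega\}\times\Upsilon_{\omega_0}}$ is affine onto $\Upsilon_\omega$ and we have pre- and post-composed with translations, the restriction $\ti\Theta_{\omega_0}|_{\{\omega\}\times(\Upsilon_{\omega_0}-\rho(\omega_0))}$ is linear onto $\Upsilon_\omega-\rho(\omega)$; this verifies the Banach-bundle form of (B1) and (B2). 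For (B3), the new transition map differs from $\varphi$ of Definition \ref{defn:aff-Ban}(a) only by composition with translations by $\rho(\omega_1)$ and $\rho(\omega_2)$, whose linear parts are the identity and whose constant parts depend on $\omega$ through $\rho$; hence the new transition function $\ti\varphi(\omega)\in\CL(\Upsilon_{\omega_1}-\rho(\omega_1),\Upsilon_{\omega_2}-\rho(\omega_2))$ is $\ti\varphi(\omega) = \big(\varphi(\omega)\big)^{x_0}$ for a suitable base point $x_0$, and it is continuous (respectively analytic) in $\omega$ precisely because $\varphi$ is and because the linear-part map $T\mapsto T^{x_0}$ on $\CA(\cdot,\cdot)$ is bounded linear, hence analytic. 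This last point also uses Lemma \ref{lem:affine} to know that the norm on $\CA$ is well defined up to equivalence, so the regularity statements do not depend on the chosen base points.

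For part (b) the only additional input is that $\rho$ is analytic and that $\Theta_{\omega_0}$ is bi-analytic; then $\ti\Theta_{\omega_0}$ is bi-analytic as a composition of $\Theta_{\omega_0}$ with the analytic (indeed affine) translation maps, whose analyticity is immediate since translations on a Banach space are analytic and $\omega\mapsto\rho(\omega)$ is analytic into $\Upsilon$, hence into the ambient $\CL$-space. So $\kappa$ together with the charts $\ti\Theta_{\omega_0}$ satisfies the definition of a locally trivial analytic Banach bundle.

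The main obstacle I expect is bookkeeping rather than conceptual: one must be careful that the translations used on the target are the fiberwise ones $x\mapsto x-\rho(\omega)$, which vary with the base point $\omega$, so that $\ti\Theta_{\omega_0}$ really is a bijection onto $\kappa^{-1}(V_{\omega_0})$ and its fiberwise restrictions land in the correct re-based fibers; and one must check that the continuity (resp. analyticity) of $\omega\mapsto\rho(\omega)$ as a map into $\Upsilon$ transfers to continuity (resp. analyticity) of the induced correction term as a map into the relevant operator or affine-operator space, which is where Definition \ref{defn:aff-Ban}(b)'s requirement that $\Upsilon$ be a Banach manifold and that $\Theta_{\omega_0}$ be bi-analytic gets used. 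Everything else is a routine verification that composition with a translation turns an affine map into a linear one and preserves the relevant regularity.
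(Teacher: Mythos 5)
Your overall strategy --- re-base each fiber at $\rho(\omega)$ and correct the given trivializations by translations --- is the right one, but your corrected trivialization $\ti\Theta_{\omega_0}(\omega,v) := \Theta_{\omega_0}\big(\omega, v+\rho(\omega_0)\big) - \rho(\omega)$ is not fiberwise linear, only fiberwise affine. Pre- and post-composing an affine map with translations again yields an affine map; it is linear precisely when it sends the new origin to the new origin, i.e.\ when $\ti\Theta_{\omega_0}(\omega,0)=0$, which here reads $\Theta_{\omega_0}\big(\omega,\rho(\omega_0)\big)=\rho(\omega)$ for every $\omega\in V_{\omega_0}$. There is no reason for the original trivialization to carry the constant section at $\rho(\omega_0)$ over to the section $\rho$ --- if it did, no correction would be needed at all --- so in general your map has the nonzero constant part $c(\omega)=\Theta_{\omega_0}\big(\omega,\rho(\omega_0)\big)-\rho(\omega)$. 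The same defect propagates to your treatment of (B3): the honest transition maps $\ti\Theta_{\omega_2}^{-1}\circ\ti\Theta_{\omega_1}$ of your construction still carry constant terms, and declaring $\ti\varphi(\omega):=\big(\varphi(\omega)\big)^{x_0}$ (i.e.\ discarding the constant part) produces maps that are no longer the transition maps of the trivializations you defined.

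The missing ingredient is an $\omega$-dependent correction on the source side. Set $\zeta_{\omega_0}(\omega):=\Pi_2\big(\Theta_{\omega_0}^{-1}(\rho(\omega))\big)\in\Upsilon_{\omega_0}$, the point of the model fiber that $\Theta_{\omega_0}(\omega,\cdot)$ sends to $\rho(\omega)$, and put $\ti\Theta_{\omega_0}(\omega,v):=\Theta_{\omega_0}\big(\omega, v+\zeta_{\omega_0}(\omega)\big)-\rho(\omega)$; in the paper's notation this is exactly $\Psi_{\omega_0}(\omega,y)=\Theta_{\omega_0}\big(\omega,y\oplus_{\omega_0}\zeta_{\omega_0}(\omega)\big)$. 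Now $\ti\Theta_{\omega_0}(\omega,0)=0$, the fiberwise maps are genuinely linear onto the re-based fibers, and the transition maps are automatically linear as composites of linear maps --- no ``take the linear part'' step is needed. What remains is to check that $\zeta_{\omega_0}=\Pi_2\circ\Theta_{\omega_0}^{-1}\circ\rho|_{V_{\omega_0}}$ is continuous (respectively analytic), that $\ti\Theta_{\omega_0}$ has a continuous (respectively analytic) inverse, and that the corrected transition maps, which take the form $\eta(\omega)\circ\varphi(\omega)\circ\xi(\omega)$ for translation-valued maps $\xi,\eta$ built from $\zeta_{\omega_1},\zeta_{\omega_2}$, depend continuously (respectively analytically) on $\omega$; these are precisely the verifications the paper carries out. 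The remainder of your outline (the re-based structures being isometrically affine isomorphic to the originals, and the use of Lemma \ref{lem:affine}) is fine.
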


\medskip

Although this proposition could be a known fact (at least in the finite dimensional case), we nevertheless give a brief account of it here. 
Fix $\omega_0\in \Omega$. 
The affine-Banach space $\Upsilon_{\omega_0}$ becomes a Banach space when equipped with the following structure:
\begin{equation*}\label{eqt:vs-str}
\|x\|_{\omega_0} := \|x - \rho(\omega_0)\|, \quad \alpha\odot_{\omega_0} x := \alpha x + (1-\alpha)\rho(\omega_0) \quad \text{and} \quad x\oplus_{\omega_0} y:= x + y -\rho(\omega_0), 
\end{equation*}
where $x,y\in \Upsilon_{\omega_0}$, $\alpha\in \BK$ and $\|\cdot\|$ is the norm on the Banach space containing $\Upsilon_{\omega_0}$.
Let $V_{\omega_0}$, $\Theta_{\omega_0}$ and $\Pi_2$ be as in Definition \ref{defn:aff-Ban}. 
Set $\zeta_{\omega_0}: V_{\omega_0} \to \Upsilon_{\omega_0}$ to be the map
$\Pi_2\circ \Theta_{\omega_0}^{-1}\circ \rho|_{V_{\omega_0}}$.  
By the continuity (respectively, analyticity) assumption on  $\Theta_{\omega_0}^{-1}$ and  $\rho$, the map $\zeta_{\omega_0}$ is continuous (respectively, analytic). 
Define $\Psi_{\omega_0}: V_{\omega_0}\times \Upsilon_{\omega_0}\to \kappa^{-1}(V_{\omega_0})$ by 
$$\Psi_{\omega_0}(\omega, y) := \Theta_{\omega_0}\big(\omega, y \oplus_{\omega_0} \zeta_{\omega_0}(\omega)\big)  \qquad (\omega\in V_{\omega_0}; y\in \Upsilon_{\omega_0}).$$
Clearly,  $\Psi_{\omega_0}$ is continuous (respectively, analytic). 
It is not hard to verify that $\Psi_{\omega_0}$ is fiberwise linear.
Furthermore, one can check that
$$\Psi_{\omega_0}^{-1}(x) = \Theta_{\omega_0}^{-1}(x) \boxplus \big(\kappa(x), 2\rho(\omega_0) - \zeta_{\omega_0}(\kappa(x))\big) \qquad (x\in \kappa^{-1}(V_{\omega_0})),$$
where $(\omega,a)\boxplus (\omega, b):= (\omega, a\oplus_{\omega_0} b)$ ($\omega\in V_{\omega_0}; a,b\in \Upsilon_{\omega_0}$). 
This means that $\Psi_{\omega_0}$ admits a continuous (respectively, analytic) inverse. 
Hence, Condition (B2) is established. 
On the other hand, suppose that $\omega_1,\omega_2\in \Omega$ with $V_{\omega_1}\cap V_{\omega_2}\neq \emptyset$. 
We define two functions $\xi: V_{\omega_1} \to \CA(\Upsilon_{\omega_1}, \Upsilon_{\omega_1})$ and $\eta:  V_{\omega_2} \to \CA(\Upsilon_{\omega_2}, \Upsilon_{\omega_2})$ by 
$$\xi(\omega)(x) := x \oplus_{\omega_1} \zeta_{\omega_1}(\omega) \quad \text{ and } \quad 
\eta(\omega)(z) := z \oplus_{\omega_2} (-1)\odot_{\omega_2} \zeta_{\omega_2}(\omega).$$
Observe that for $a\in \Upsilon_{\omega_1}$, if $\chi_a\in \CA(\Upsilon_{\omega_1}, \Upsilon_{\omega_1})$ is the map  given by $\chi_a(x) := x \oplus_{\omega_1} a$, then $\chi:a\mapsto \chi_a$ is continuous and affine (and hence is analytic).
Since $\zeta_{\omega_1}$ and $\zeta_{\omega_2}$ are continuous (respectively, analytic), we know that both $\xi$ and $\eta$, being the compositions of $\zeta_{\omega_i}$ with $\chi$, are continuous  (respectively, analytic).
If $\varphi$ is as in Condition (B3), then for any $\omega\in V_{\omega_1}\cap V_{\omega_2}$ and $x\in \Upsilon_{\omega_1}$, one has  
\begin{align*}
\Pi_2\big(\Psi_{\omega_2}^{-1}\big(\Psi_{\omega_1}(\omega,x)\big)\big)
& = \varphi(\omega)\big(x \oplus_{\omega_1} \zeta_{\omega_1}(\omega)\big) \oplus_{\omega_2} (-1)\odot_{\omega_2} \zeta _{\omega_2}(\omega) 
 = \big(\eta(\omega)\circ \varphi(\omega)\circ \xi(\omega)\big)(x). 
\end{align*}
Therefore, Condition (B3) still holds when the maps $\Theta_{\omega_1}$ and $\Theta_{\omega_2}$ are replaced by $\Psi_{\omega_1}$ and $\Psi_{\omega_2}$, respectively. 

\medskip

\begin{rem}\label{rem:gen-aff-Ban}
%
(a) Suppose that $(\Upsilon, \Omega, \kappa)$ is a locally trivial continuous affine-Banach bundle such that $\Omega$ is paracompact.
Then one can see from the discussion before Proposition \ref{prop:loc-tri-cont-Ban} that there always exists a continuous global cross section. 
This means that when $\Omega$ is paracompact, there is no different between locally trivial continuous affine-Banach bundles over $\Omega$ and locally trivial continuous Banach bundles over $\Omega$. 

\smnoind
(b) Note that the ``$\BK$-analyticity'' in the above means the existence of  ``local power series expansions'' (as in Definition 1.6 and p.36 of \cite{Upm}). We say that a map from an open subset of a $\BK$-Banach space to another $\BK$-Banach space is \emph{$\BK$-differentiable} if it is Frechet differentiable. 
One can also defined the notion of \emph{locally trivial $\BK$-differentiable affine-Banach bundles} if one replaces the terms ``bi-continuous''  and ``continuous'' in Conditions (B2) and (B3) of Definition \ref{defn:aff-Ban}(a) by ``$\BK$-bi-differentiable'' and ``$\BK$-differentiable'', respectively. 
Clearly, a locally trivial $\BK$-analytic affine-Banach bundle is a  locally trivial $\BK$-differentiable affine-Banach bundle. 
The corresponding statement as Proposition \ref{prop:cont-sect-F-Ban-bundle} is also valid for $\BK$-differentiable affine-Banach bundles. 
\end{rem}

\medskip

Let us end this section with the following obvious fact for later reference.

\medskip

\begin{lem}\label{lem:square-zero}
If $R\in \CL(X)$ satisfying $R^2 = 0$, then $I + R$ is invertible with inverse $I - R$. 
\end{lem}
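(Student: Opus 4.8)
The statement to be proven is Lemma~\ref{lem:square-zero}: if $R \in \CL(X)$ satisfies $R^2 = 0$, then $I + R$ is invertible with inverse $I - R$.

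This is a one-line verification, so my plan is to simply compute the products $(I+R)(I-R)$ and $(I-R)(I+R)$ directly.

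First I would expand $(I+R)(I-R) = I - R + R - R^2 = I - R^2$. Since $R^2 = 0$ by hypothesis, this equals $I$. Similarly, $(I-R)(I+R) = I + R - R - R^2 = I - R^2 = I$. Both products being the identity, $I - R$ is a two-sided inverse of $I + R$, so $I + R \in \OGL(X)$ with $(I+R)^{-1} = I - R$. Since $R$ is a bounded operator, so is $I - R$, so there is nothing further to check regarding continuity.

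There is no real obstacle here; the only thing to be careful about is that the algebra of bounded operators is not commutative in general, so one should check both orders of multiplication rather than just one — but since $I$ and $R$ commute with each other, even that is automatic. The lemma is stated as ``obvious'' in the excerpt, and indeed the proof is immediate from the binomial-type expansion together with $R^2 = 0$.
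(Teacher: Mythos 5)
Your proof is correct and is exactly the standard verification the paper has in mind; the paper states this lemma as an "obvious fact" without proof, and your direct expansion of $(I+R)(I-R)$ and $(I-R)(I+R)$ using $R^2=0$ is the intended argument. Nothing is missing.
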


\bigskip

\section{$\CI(X)$ is a locally trivial analytic affine-Banach bundle}

\medskip

From now on, $X$ is a $\BK$-Banach space with $\dim_\BK X > 1$ (could be infinite). 
Our main concern is the following set
$$\CI(X):= \big\{\idp\in \CL(X)\setminus \{0,I\}: \idp^2 = \idp \big\},$$
with different structures induced from $\CL(X)$.
For any $\idp\in \CI(X)$, it is obvious that both $Q(X)$ and $\ker \idp$ belongs to $\SG(X)$, and that $\idp(X)\smallT \ker \idp$ (i.e., they are complement of each other). 
Conversely, if $E\in \SG(X)$ and $F\in \CC_E$ (see \eqref{eqt:def-C-E}), there is a unique element  $\idp^F_E\in \CI(X)$ with 
$$\idp^F_E(X) = E \quad \text{and}\quad \ker \idp^F_E = F.$$

\medskip

For every $F_0\in \SG(X)$, let us denote 
$$\CI(X)^{F_0}:= \{\idp\in \CI(X): \ker \idp = F_0\}, \quad \CI(X)_{F_0}:= \{\idp\in \CI(X): \idp(X) = F_0\}$$
as well as
$$\CL^{F_0}(X, F_0):= \{T\in \CL(X): T(X)\subseteq F_0  \text{ and } T(F_0)= (0) \}.$$
The starting point of this paper is the following easy observation. 

\medskip

\begin{lem}\label{lem:I-F-affine}
Let $X$ be a Banach space and $E_0, F_0\in \SG(X)$ with $E_0 \smallT F_0$. 
Then $\CI(X)^{F_0} = \CL^{F_0}(X,F_0) + \idp^{F_0}_{E_0}$, and $\CI(X)_{F_0} = \CL^{F_0}(X,F_0) + \idp^{E_0}_{F_0}$. 
\end{lem}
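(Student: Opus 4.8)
The plan is to prove the first identity $\CI(X)^{F_0} = \CL^{F_0}(X,F_0) + \idp^{F_0}_{E_0}$ by double inclusion, and then derive the second identity either by the same argument or by a duality/symmetry observation. For the inclusion $\supseteq$, I would take an arbitrary $T\in \CL^{F_0}(X,F_0)$ and set $\idp := T + \idp^{F_0}_{E_0}$; the task is to show $\idp\in\CI(X)$ with $\ker\idp = F_0$. First I would compute $\idp^2$: writing $R := \idp^{F_0}_{E_0}$, we have $\idp^2 = T^2 + TR + RT + R^2 = T^2 + TR + RT + R$, using $R^2 = R$. Now $T^2 = 0$ because $T(X)\subseteq F_0$ and $T(F_0) = (0)$; similarly $TR = 0$ since $R(X) = E_0$... wait, that is not automatically in $\ker T$. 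Let me reconsider: $R(X) = E_0$ and $T$ kills only $F_0$, so $TR$ need not vanish. Instead I would note $RT = T$: indeed $T(X)\subseteq F_0 = \ker R$ would give $RT = 0$, but $\ker R = F_0$ means $R$ kills $F_0$, so $RT = 0$; and $TR$: since $R$ is the projection onto $E_0$ along $F_0$, and $T$ has range in $F_0$... the cleanest route is $\idp = R + T$ acts as: on $E_0$, $R = \id$ so $\idp|_{E_0} = \id_{E_0} + T|_{E_0}$, which maps into $E_0 \oplus F_0$; on $F_0$, $R = 0$ and $T = 0$, so $\idp|_{F_0} = 0$. Hence $\ker\idp \supseteq F_0$, and a dimension/complement count (or checking $\idp(X)$ together with $F_0$ spans $X$) gives $\ker\idp = F_0$ exactly, provided $\idp\neq 0,I$. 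Then $\idp^2 = \idp$ follows since $\idp$ is zero on $F_0$ and on $E_0$ we get $\idp(\idp x) = \idp(x + Tx) = (x+Tx) + 0 = \idp x$ because $Tx\in F_0$. The condition $\idp\notin\{0,I\}$ holds because $E_0\in\SG(X)$ is non-zero and proper and $\idp(X)$ is a complement of $F_0$ of the "same size" as $E_0$.

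For the reverse inclusion $\subseteq$, I would take $\idp\in\CI(X)^{F_0}$, so $\ker\idp = F_0$ and $\idp^2=\idp$, hence $\idp(X)\in\CC_{F_0}$. Set $T := \idp - \idp^{F_0}_{E_0}$; I must show $T\in\CL^{F_0}(X,F_0)$, i.e. $T(X)\subseteq F_0$ and $T(F_0) = (0)$. The second is immediate: both $\idp$ and $\idp^{F_0}_{E_0}$ kill $F_0$. For the first, decompose $x = e + f$ with $e\in E_0$, $f\in F_0$ (possible since $E_0\smallT F_0$); then $Tx = \idp(e) - e$ (both operators kill $f$, and $\idp^{F_0}_{E_0}(e) = e$), so it suffices to show $\idp(e) - e \in F_0$ for all $e\in E_0$. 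Since $\idp^2 = \idp$, we have $\idp(\idp(e) - e) = \idp(e) - \idp(e) = 0$, so $\idp(e) - e \in \ker\idp = F_0$. This closes the first identity.

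For the second identity $\CI(X)_{F_0} = \CL^{F_0}(X,F_0) + \idp^{E_0}_{F_0}$, the quickest route is to observe $\idp\in\CI(X)_{F_0}$ iff $I - \idp \in \CI(X)^{F_0}$ (since $\idp(X) = \ker(I-\idp)$ and $\ker\idp = (I-\idp)(X)$), and that $\idp^{E_0}_{F_0} = I - \idp^{F_0}_{E_0}$. Applying the first identity to $I-\idp$ gives $I - \idp = T + \idp^{F_0}_{E_0}$ for a unique $T\in\CL^{F_0}(X,F_0)$, hence $\idp = -T + (I - \idp^{F_0}_{E_0}) = (-T) + \idp^{E_0}_{F_0}$, and $-T$ ranges over $\CL^{F_0}(X,F_0)$. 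I expect the main obstacle to be nothing deep but rather bookkeeping: being careful that the set equalities are genuine equalities of affine translates (every element of the affine subspace is realized and no spurious elements creep in), and in particular verifying at the boundary that $T + \idp^{F_0}_{E_0}$ never accidentally equals $0$ or $I$ — this uses $F_0$ proper and non-zero, i.e. $\dim_\BK X > 1$ together with $E_0, F_0 \in \SG(X)$. Lemma \ref{lem:square-zero} is not strictly needed here but would be used in the neighbouring arguments about invertibility.
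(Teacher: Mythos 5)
Your proposal is correct and follows essentially the same route as the paper: the inclusion $\subseteq$ via the observation that $\idp(e)-e\in\ker\idp=F_0$ for $e\in E_0$, the inclusion $\supseteq$ by checking directly that $T+\idp^{F_0}_{E_0}$ is an idempotent vanishing on $F_0$ with range $(I+T)(E_0)\in\CC_{F_0}$ (the paper phrases this as the identity $T+\idp^{F_0}_{E_0}=\idp^{F_0}_{(I+T)(E_0)}$), and the second identity via the bijection $\idp\mapsto I-\idp$. The false start about $TR$ in your writeup is harmless since the argument you settle on (checking the action separately on $E_0$ and $F_0$) is valid.
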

\begin{proof}
By considering the bijection $\idp \mapsto I -\idp$ from $\CI(X)^{F_0}$ onto $\CI(X)_{F_0}$, one only needs to verify the first equality. 
In fact, if $E\in \CC_{F_0}$ (see \eqref{eqt:def-C-E}), then 
\begin{equation*}
\big(\idp^{F_0}_E - \idp^{F_0}_{E_0} \big)(x) = \idp^{F_0}_E(x) - x = - \idp^E_{F_0}(x)\in F_0 \qquad (x\in E_0).
\end{equation*}
From this, as well as the fact that $\idp^{F_0}_{E} - \idp^{F_0}_{E_0}$ vanish on $F_0$, we obtain 
$\big(\idp^{F_0}_{E} - \idp^{F_0}_{E_0}\big)(X)\subseteq {F_0}.$
Hence, $\idp - \idp^{F_0}_{E_0}\in \CL^{F_0}(X,F_0)$, for every $\idp\in \CI(X)^{F_0}$. 

Conversely, consider an element $R\in \CL^{F_0}(X,F_0)$. 
It is not hard to check that $(I+R)(E_0)\in \CC_{F_0}$. 
For any $x\in E_0$, as $R(x)\in F_0$, one has
\begin{equation*}
\idp^{F_0}_{(I+R)(E_0)}(x) = \idp^{F_0}_{(I+R)(E_0)}(x+ R(x)) = x +  R(x),
\end{equation*}
which gives 
\begin{equation}\label{eqt:QFET-T}
R(x) = \idp^{F_0}_{(I+R)(E_0)}(x) - \idp^{F_0}_{E_0}(x).
\end{equation}
Furthermore, as $R$, $\idp^{F_0}_{(I+R)(E_0)}$ and $\idp^{F_0}_{E_0}$ all vanish on $F_0$, we know that $R = \idp^{F_0}_{(I+R)(E_0)} - \idp^{F_0}_{E_0}$. 
\end{proof}

\medbreak

Consider $F_0\in \CC_{E_0}$ and $E_0,E_1\in \CC_{F_0}$. 
We have
\begin{equation}\label{eqt:QFE(E0)}
\idp^{F_0}_{E_1}(E_0)  = \idp^{F_0}_{E_1}(E_0+F_0)  = E_1. 
\end{equation}
Moreover, Lemma \ref{lem:I-F-affine} allows us to define a map $\pi_{E_0,F_0}$ from $\CC_{F_0}$ onto $\CL^{F_0}(X,{F_0})$ via
\begin{equation}\label{eqt:defn-pi-E-F}
\pi_{E_0,F_0}(E):=\idp^{F_0}_E - \idp^{F_0}_{E_0} \qquad (E\in \CC_{F_0}). 
\end{equation}
Set $T:= p_{E_0,F_0}(E_1)\in \CL(E_0,F_0)$ (see \eqref{eqt:defn-p-E-F}). 
Then $E_1=(I+T)(E_0)$  (see \eqref{eqt:defn-q-E-F}). 
Thus, applying \eqref{eqt:QFET-T} to $R:=T\circ \bar\idp^{F_0}_{E_0}\in \CL^{F_0}(X,F_0)$ (where $\bar\idp^{F_0}_{E_0}$ is the idempotent $\idp^{F_0}_{E_0}$ regarding as a map from $X$ to $E_0$), we get 
$$T(x) = \idp^{F_0}_{E_1}(x) - \idp^{F_0}_{E_0}(x) \qquad (x\in E_0).$$
In other words, 
\begin{equation}\label{eqt:rel-q-pi}
p_{E_0,F_0} = \Lambda_{E_0,F_0}\circ \pi_{E_0,F_0},  
\end{equation}
where 
\begin{equation}\label{eqt:defn:L-F0-X-F0}
\Lambda_{E_0,F_0}: \CL^{F_0}(X,{F_0}) \to \CL(E_0,F_0)
\end{equation}
is the Banach space isomorphism given by restrictions. 
Notice that the inverse of $\Lambda_{E_0,F_0}$ is given by 
compositions of elements in $\CL(E_0,F_0)$  with the map $\bar \idp^{F_0}_{E_0}:X\to E_0$ as in the above. 
Furthermore, \eqref{eqt:defn-pi-E-F} implies 
\begin{equation}\label{eqt:converse-map}
\pi_{E_0,F_0}(E_1) = - \pi_{E_1,F_0}(E_0).
\end{equation}
On the other hand, by Relation \eqref{eqt:rel-q-pi}, the analytic atlas of $\SG(X)$ as in \eqref{eqt:atlas-Upm} can be rewritten as:
\begin{equation}\label{eqt:anal-loc-atlas}
\big\{\big(\CC_{F_0}, \pi_{E_0,F_0}, \CL^{F_0}(X,{F_0})\big): E_0, F_0\in \SG(X); F_0\smallT E_0 \big\}.
\end{equation}
One good point of this atlas is that elements in $\CL^{F_0}(X, F_0)$ are nilpotent operators of degree two, and Lemma \ref{lem:square-zero} applies to them. 
Another benefit of this atlas is the following result, which gives a clear picture of the topology on $\SG(X)$. 
In particular, we know that $E_k$  converges to $E_0$ is basically the same as $\idp^{F_0}_{E_k}(x)$  converges to $\idp^{F_0}_{E_0}(x)$ in a uniform way on all bounded subsets. 

\medskip

\begin{cor}\label{cor:conv-in G(X)}
Suppose that $\{E_k\}_{k\in \BN}$ is a sequence in $\SG(X)$ and $E_0\in \SG(X)$. 
Then $E_k\to E_0$ if and only if for every $F_0\in \CC_{E_0}$ (equivalently, there exists $F_0\in \CC_{E_0}$), there exists $k_0\in \BN$ such that $E_k\in \CC_{F_0}$ when $k\geq k_0$ and that $\big\|\idp^{F_0}_{E_k} - \idp^{F_0}_{E_0}\big\| \to 0$. 
\end{cor}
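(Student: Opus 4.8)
The plan is to exploit the analytic atlas \eqref{eqt:anal-loc-atlas} and the relation \eqref{eqt:rel-q-pi} to reduce the statement about convergence $E_k\to E_0$ in $\SG(X)$ to a convergence statement about the associated idempotents. Since $\SG(X)$ is metrizable, $E_k\to E_0$ is equivalent to saying that for each open neighbourhood of $E_0$, the tail of the sequence lies in it; and by \eqref{eqt:anal-loc-atlas} the sets $\CC_{F_0}$ with $F_0\in \CC_{E_0}$ form a neighbourhood basis at $E_0$, each carrying a homeomorphism $\pi_{E_0,F_0}$ onto the Banach space $\CL^{F_0}(X,F_0)$.

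First I would prove the ``for every $F_0$'' direction. Assume $E_k\to E_0$ and fix any $F_0\in \CC_{E_0}$. Since $\CC_{F_0}$ is open and contains $E_0$, there is $k_0$ with $E_k\in \CC_{F_0}$ for $k\ge k_0$; since $\pi_{E_0,F_0}:\CC_{F_0}\to \CL^{F_0}(X,F_0)$ is a homeomorphism sending $E_0$ to $0$, continuity gives $\pi_{E_0,F_0}(E_k)\to 0$, which is exactly $\big\|\idp^{F_0}_{E_k} - \idp^{F_0}_{E_0}\big\|\to 0$ by \eqref{eqt:defn-pi-E-F}. For the converse, it suffices — by metrizability — to assume the stated condition holds for a single $F_0\in \CC_{E_0}$: then for $k\ge k_0$ we have $E_k\in \CC_{F_0}$ and $\pi_{E_0,F_0}(E_k)=\idp^{F_0}_{E_k}-\idp^{F_0}_{E_0}\to 0 = \pi_{E_0,F_0}(E_0)$ in $\CL^{F_0}(X,F_0)$, so applying the homeomorphism $p_{E_0,F_0} = \pi_{E_0,F_0}^{-1}$ (via \eqref{eqt:rel-q-pi}, the inverse is continuous) yields $E_k\to E_0$ inside $\CC_{F_0}$, hence in $\SG(X)$.

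The only point needing a little care — and the step I expect to be the main (though modest) obstacle — is the logical bookkeeping of the ``for every / there exists'' phrasing: the statement is genuinely an equivalence of three conditions (convergence; the condition for all $F_0$; the condition for some $F_0$), and one must check that $\CC_{E_0}$ is nonempty (true since $E_0\in \SG(X)$ is complemented) so that the ``there exists'' clause is not vacuous, and that $\CI(X)^{F_0}=\CL^{F_0}(X,F_0)+\idp^{F_0}_{E_0}$ from Lemma \ref{lem:I-F-affine} is what legitimises identifying $\pi_{E_0,F_0}(E)$ with the norm difference of idempotents. Once the atlas \eqref{eqt:anal-loc-atlas} and Lemma \ref{lem:affine}-style identifications are in place, everything else is a direct translation through the homeomorphisms $\pi_{E_0,F_0}$, with no analytic or estimate-heavy input required.
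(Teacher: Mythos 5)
Your proof is correct and follows exactly the route the paper intends: the corollary is stated as an immediate consequence of the atlas \eqref{eqt:anal-loc-atlas}, i.e.\ of the facts that $\CC_{F_0}$ is open and that $\pi_{E_0,F_0}(E)=\idp^{F_0}_E-\idp^{F_0}_{E_0}$ is a homeomorphism of $\CC_{F_0}$ onto $\CL^{F_0}(X,F_0)$ (via \eqref{eqt:rel-q-pi} and the isomorphism $\Lambda_{E_0,F_0}$). Only a cosmetic slip: $p_{E_0,F_0}$ is not $\pi_{E_0,F_0}^{-1}$ but $\Lambda_{E_0,F_0}\circ\pi_{E_0,F_0}$; the inverse of $\pi_{E_0,F_0}$ is $q_{E_0,F_0}\circ\Lambda_{E_0,F_0}$, which is continuous for the same reason, so the argument stands.
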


\medskip

Let $\kappa: \CI(X)\to \SG(X)$ be the surjection as given in \eqref{eqt:def-kappa}. 
Lemma \ref{lem:I-F-affine} tells us that the fiber $\kappa^{-1}(E)$ (which coincides with $\CI(X)_E$) is an affine-Banach subspace of $\CL(X)$, for every $E\in \SG(X)$.
The main result of this section is that under these affine-Banach space structures on the fibers and the Banach submanifold structure on $\CI(X)$ induced from $\CL(X)$, 
$(\CI(X), \SG(X), \kappa)$ is a locally trivial analytic affine-Banach bundle.

\medskip

For the proof of this statement, we need to consider the canonical actions of $\GL(X)$ on $\CI(X)$ and on $\SG(X)$. 
Indeed, for any $\ivt\in \GL(X)$ and $E,F\in \SG(X)$ with $F \smallT E$, one easily sees that 
\begin{equation}\label{eqt:defn-Ad-act}
\Ad(\ivt)(\idp^F_E):= \ivt\circ \idp^F_E\circ \ivt^{-1} = \idp^{\ivt(F)}_{\ivt(E)},
\end{equation}
and this produces an action of $\GL(X)$ on $\CI(X)$.
On the other hand, 
\begin{equation}\label{eqt:def-alpha}
\alpha (\ivt, E):= \ivt(E)
\end{equation}
induces an action of $\GL(X)$ on $\SG(X)$. 
It is well-known that the two actions $\alpha$ and $\Ad$ are $\BK$-analytic. 

\medskip

We also need the following easy fact for the proof of the main theorem.

\medskip

\begin{lem}\label{lem:comp-affine-subsp}
Let $Y$ be a $\BK$-Banach space, and let $A,B\subseteq Y$ be two $\BK$-affine-Banach subspaces. 
Suppose that $B-b_0\in \SG(Y)$ for an element $b_0\in B$. 
Then there is a continuous affine map $\Gamma: \CL(Y) \to \CA(A,B)$ such that whenever $T\in \CL(Y)$ satisfying $T(A)\subseteq B$, one has $\Gamma(T) = T|_A$. 
\end{lem}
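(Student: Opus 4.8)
The plan is to build $\Gamma$ by hand as a composition of a linear part and a constant (affine shift), using the Banach subspace hypothesis on $B$ to produce a bounded linear projection of $Y$ onto the directing subspace $B - b_0$.

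First I would fix base points $a_0 \in A$ and $b_0 \in B$, set $A_0 := A - a_0$ and $B_0 := B - b_0$, and note that by hypothesis $B_0 \in \SG(Y)$, so there is a bounded linear projection $\pi \in \CL(Y)$ with $\pi(Y) = B_0$. For $T \in \CL(Y)$ I would define
\begin{equation*}
\Gamma(T)(a) := \pi\big(T(a) - T(a_0)\big) + \big(T(a_0) \text{ adjusted to land in } B\big),
\end{equation*}
more precisely $\Gamma(T)(a) := \pi\big(T(a - a_0)\big) + \sigma(T)$, where $\sigma(T) \in B$ is chosen to depend affinely and continuously on $T$ and to agree with $T(a_0)$ whenever $T(a_0) \in B$. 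A natural choice is $\sigma(T) := \pi\big(T(a_0) - b_0\big) + b_0$; this lies in $B_0 + b_0 = B$, it is affine (indeed linear plus constant) and continuous in $T$, and if $T(a_0) \in B$ then $T(a_0) - b_0 \in B_0$ so $\pi$ fixes it and $\sigma(T) = T(a_0)$. One then checks that $a \mapsto \Gamma(T)(a) = \pi(T(a-a_0)) + \pi(T(a_0)-b_0) + b_0$ is a continuous $\BK$-affine map from $A$ into $B$: its linear part (in the variable $a - a_0 \in A_0$) is $\pi \circ T|_{A_0}$, which is bounded, and its value at $a_0$ is $\sigma(T) \in B$.

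Next I would verify the universal property: if $T(A) \subseteq B$, then for every $a \in A$ we have $T(a) - T(a_0) \in B_0$, so $\pi$ fixes it, giving $\pi(T(a - a_0)) = T(a) - T(a_0)$; combined with $\sigma(T) = T(a_0)$ (valid since $T(a_0) \in B$), we get $\Gamma(T)(a) = T(a) - T(a_0) + T(a_0) = T(a)$, i.e. $\Gamma(T) = T|_A$. Finally I would check that $\Gamma : \CL(Y) \to \CA(A,B)$ is itself continuous and affine. Affineness is immediate since $T \mapsto \pi \circ T|_{A_0}$ and $T \mapsto \sigma(T)$ are each affine (in fact linear up to the constant $b_0$) in $T$. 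For continuity, using the norm $\|\cdot\|_{a_0}$ on $\CA(A,B)$, one has $\|\Gamma(T)\|_{a_0} = \|\pi \circ T|_{A_0}\| + \|\sigma(T)\| \le \|\pi\|\,\|T\| + \|\pi\|\,\|T\| + \|b_0\|$, which bounds $\Gamma$ affinely in $\|T\|$; hence $\Gamma$ is continuous (and Lemma~\ref{lem:affine} makes this independent of the base points).

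The only mildly delicate point is bookkeeping: making sure the chosen constant term $\sigma(T)$ genuinely lands in $B$ and simultaneously restores $T(a_0)$ on the nose when $T(a_0) \in B$, rather than merely up to something in $B_0$. This is handled precisely by inserting the $-b_0$ and $+b_0$ as above so that $\pi$ acts on an element of the linear space $B_0$. Everything else — boundedness of $\pi \circ T|_{A_0}$, affineness, the norm estimate — is routine. I expect no real obstacle beyond this; the existence of the projection $\pi$ is exactly where the hypothesis $B - b_0 \in \SG(Y)$ is used, and without it the statement would fail.
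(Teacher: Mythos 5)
Your construction is correct and is essentially the paper's own argument: the paper picks a complement $D\in\CC_{B-b_0}$ and sets $\Gamma(T):=\ti\idp\circ T|_A$ with $\ti\idp(y):=\idp^D_{B-b_0}(y-b_0)+b_0$, and your formula $\Gamma(T)(a)=\pi(T(a-a_0))+\pi(T(a_0)-b_0)+b_0$ collapses by linearity of $\pi$ to exactly $\pi(T(a)-b_0)+b_0$, i.e.\ the same map with $\pi=\idp^D_{B-b_0}$. The base-point bookkeeping and the norm estimate you add are fine but not needed beyond what the paper records.
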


\medskip

In fact, let us pick an element $D\in \CC_{B-b_0}$, and  define a map  $\ti \idp\in \CA(Y,B)$ by $\ti \idp(y):= \idp^D_{B-b_0}(y-b_0) + b_0$ ($y\in Y$). 
If we set $\Gamma(T):= \ti \idp\circ T|_A$ ($T\in \CL(Y)$), then clearly, $\Gamma$ is a continuous affine map satisfying the requirement. 

\medskip

A final piece of well-known information that we need is the following. 
For $E,F\in \SG(X)$ with $F\smallT E$, one has $\CL^E(X,E)\cap \CL^F(X,F) = \{0\}$ (since $E+F=X$).
We identify $\CL^E(X,E)\oplus \CL^F(X,F)$ with the sum of the two subspaces in $\CL(X)$. 
If we define a map $\Delta_{E,F}:\CL(X)\to \CL(X)$ by 
$\Delta_{E,F}(T):= \idp^{F}_{E}\circ T\circ \idp^{E}_{F}$ $(T\in \CL(X))$,  
then both $ \Delta_{E,F}$ and $\Delta_{E,F} + \Delta_{F,E}$ are idempotents with 
\begin{equation}\label{eqt:image-Delta}
\Delta_{E,F}\big(\CL(X)\big) = \CL^{E}(X, E) \quad \text{and}\quad (\Delta_{E,F} + \Delta_{F,E})\big(\CL(X)\big) = \CL^{E}(X, E)\oplus \CL^{F}(X, F).
\end{equation}
Consequently, both $\CL^{E}(X, E)$ and 
$\CL^{E}(X, E)\oplus \CL^{F}(X, F)$ are complemented subspaces of $\CL(X)$.
In the following we will denote elements in $\CL^E(X,E)\oplus \CL^F(X,F)$ in either the form $(R,S)$ or $R+S$.

\medskip

Before presenting the main theorem of this section, let us first give an outline of its proof, and give some remarks. 

\medskip

We will begin by showing that $\kappa$ is continuous. 
We will then show that $(\CI(X), \SG(X), \kappa)$ satisfies Conditions (B1) and (B2) of Definition \ref{defn:aff-Ban}(a) (i.e., the continuous case).
From this, we construct an analytic atlas for $\CI(X)$ (see \eqref{eqt:anal-atlas-IX}).
This produces a $\BK$-analytic Banach manifold structure on $\CI(X)$ compatible with the norm topology. 
We will then verify that the inclusion map from $\CI(X)$ to $\CL(X)$ is  an analytic immersion. 
Hence, $\CI(X)$ is a Banach submanifold of $\CL(X)$, under the above Banach manifold structure.

\medskip

Note that as elements in $\CI(X)$ satisfies the algebraic relation $\idp^2 - \idp = 0$, it is a known fact that $\CI(X)$ is a closed submanifold of $\CL(X)$ when $\dim X<\infty$. 
Even though $\CI(X)$ being a submanifold of $\CL(X)$ may also be a known fact in the infinite dimensional case, in order to verify that $\CI(X)$ is a locally trivial analytic affine-Banach bundle over $\SG(X)$, we need to use the explicitly atlas for $\CI(X)$ as in \eqref{eqt:anal-atlas-IX}. 
This atlas will also be needed in the later part of this article.  
One good feature of this analytic atlas for $\CI(X)$ is that it is ``algebraic'' in nature (see \eqref{eqt:mu-E-F} and \eqref{eqt:mu-E-F-inv}). 

\medskip

Finally, we will establish Conditions (B2) and (B3) of Definition \ref{defn:aff-Ban}(b) (i.e. the analytic case). 
Observe that if the fiber over each point in $\SG(X)$ were a Banach space, then one might use Proposition 1.2 in Chapter 3 of \cite{Lang} to simplify the argument.
However, since we are in the affine-Banach setting, we give a more direct argument here. 

\medskip

\begin{thm}\label{thm:main}
Let $X$ be a $\BK$-Banach space. 
If $\CI(X)$ is equipped with the $\BK$-analytic Banach submanifold structure induced from $\CL(X)$, then $\big(\CI(X), \SG(X), \kappa\big)$ is a locally trivial $\BK$-analytic affine-Banach bundle, such that the affine-Banach space structure on $\kappa^{-1}(E)$ is the one induced from $\CL(X)$, for every $E\in \SG(X)$. 
\end{thm}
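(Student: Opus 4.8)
The plan is to follow the outline given just before the theorem, building the bundle structure explicitly from the coordinates $\pi_{E_0,F_0}$ on $\SG(X)$ and the affine decomposition of the fibers $\CI(X)_E = \CL^E(X,E) + \idp^{E_0}_E$ provided by Lemma~\ref{lem:I-F-affine}. First I would check that $\kappa$ is continuous: given $\idp_k\to \idp_0$ in $\CI(X)$, one produces a common complement $F_0\smallT \idp_0(X)$ (e.g. $F_0 = \ker \idp_0$), notes that for large $k$ the operator $\idp_k$ still has range complementary to $F_0$ (an open condition, using Lemma~\ref{lem:square-zero} to invert $I + (\idp_k - \idp_0)|$-type perturbations), and then reads off from $\idp_k\to\idp_0$ in norm that $\idp_k(X)\to\idp_0(X)$ via Corollary~\ref{cor:conv-in G(X)}.

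Next, for each pair $E_0\smallT F_0$ I would define the local trivialization over $\CC_{F_0}$. Fixing the reference idempotent $\idp^{F_0}_{E_0}$, set, for $E\in\CC_{F_0}$ and $S\in \CL^{F_0}(X,F_0)$,
\[
\Theta_{F_0}(E, S) \ :=\ S \ +\ \idp^{F_0}_E\ \in\ \CI(X)_E,
\]
identifying the model fiber with $\CI(X)^{F_0}_{\mathrm{model}} := \CL^{F_0}(X,F_0) + \idp^{F_0}_{E_0}$ via $S\mapsto S + \idp^{F_0}_{E_0}$; by Lemma~\ref{lem:I-F-affine} this is a bijection of $\CC_{F_0}\times \CI(X)^{F_0}$ onto $\kappa^{-1}(\CC_{F_0})$, and on each slice $\{E\}\times(\cdot)$ it is affine onto $\CI(X)_E$ because $\idp^{F_0}_E - \idp^{F_0}_{E_0} = \pi_{E_0,F_0}(E) \in \CL^{F_0}(X,F_0)$ and adding a fixed element of the direction space is an affine self-map. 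The content is that $E\mapsto \idp^{F_0}_E$ is analytic: since $E = q_{E_0,F_0}(T) = (I+T)(E_0)$ with $T = \Lambda_{E_0,F_0}(\pi_{E_0,F_0}(E))$, one has $\idp^{F_0}_{(I+T)(E_0)} = (I + T\circ\bar\idp^{F_0}_{E_0})\circ \idp^{F_0}_{E_0}\circ(I + T\circ\bar\idp^{F_0}_{E_0})^{-1}$ by \eqref{eqt:defn-Ad-act} and Lemma~\ref{lem:square-zero}, which is a composition of analytic maps of $T$, hence of the chart coordinate $\pi_{E_0,F_0}(E)$. This gives (B1) and (B2) for the continuous — in fact analytic — bundle, and the same formula, composed with charts on $\CL(X)$, exhibits the inverse charts
\[
\mu_{E_0,F_0}: \kappa^{-1}(\CC_{F_0}) \ \longrightarrow\ \CL^{F_0}(X,F_0)\oplus \CL^{F_0}(X,F_0),\qquad
\idp\ \longmapsto\ \bigl(\pi_{E_0,F_0}(\idp(X)),\ \idp - \idp^{F_0}_{\idp(X)}\bigr),
\]
which I would take as the promised atlas \eqref{eqt:anal-atlas-IX} for $\CI(X)$; chart transitions between two such $\mu$'s are analytic because they are assembled from the analytic transitions of $\SG(X)$ and from $(\idp^{F_0}_E - \idp^{F_1}_E)$-type corrections, each analytic in the base coordinate by the $\Ad$-formula above. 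The inclusion $\CI(X)\hookrightarrow\CL(X)$ written in these charts is $(\,\text{base coord},\ S\,)\mapsto S + \idp^{F_0}_E$, an analytic map whose differential is injective with complemented image (it splits off the summand $\CL^{F_0}(X,F_0)$ landing in the fiber direction, plus the derivative of the analytic section $E\mapsto\idp^{F_0}_E$ into a complement), so $\CI(X)$ is an analytic Banach submanifold of $\CL(X)$ and $\kappa$, being in charts the first projection, is analytic.

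Finally, (B3) in the analytic case: for $F_0, F_1$ with $\CC_{F_0}\cap\CC_{F_1}\neq\emptyset$ I must show $\omega=E\mapsto \varphi(E)\in\CA(\CI(X)^{F_0}, \CI(X)^{F_1})$ is analytic. Unwinding the definitions, $\varphi(E)$ sends $S + \idp^{F_0}_{E_0}\in\CI(X)_E$ to the unique element of $\CI(X)_E$ of the form $S' + \idp^{F_1}_{E_1}$, i.e. $\varphi(E)$ is translation in $\CL(X)$ by the fixed vector $\idp^{F_0}_{E} - \idp^{F_1}_{E}$ (both idempotents have range $E$, so their difference lies in $\CL^{F_0}(X,F_0)\cap$ the relevant direction space), precomposed and postcomposed with the fixed linear isomorphisms $\CL^{F_0}(X,F_0)\cong\CL^{F_1}(X,F_1)$ coming from the model identifications; by Lemma~\ref{lem:comp-affine-subsp} the map $T\mapsto T|$-restriction realizing these passages between affine-Banach subspaces of $\CL(X)$ is continuous affine, and by the $\Ad$-formula $E\mapsto \idp^{F_0}_E - \idp^{F_1}_E$ is analytic in the base coordinate, so $\varphi$ is analytic. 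The main obstacle I anticipate is precisely the bookkeeping in this last step — keeping straight, in the affine-Banach (not linear) setting, which difference of idempotents lives in which direction space and verifying that the various restriction/identification maps are mutually compatible — together with the verification that the inclusion into $\CL(X)$ is an immersion with \emph{complemented} tangent image; both are routine in spirit but require the explicit, "algebraic" form of the atlas rather than an abstract appeal to a result like Proposition~1.2 of Chapter~3 of \cite{Lang}, which would apply only if the fibers were genuine Banach (not merely affine-Banach) spaces.
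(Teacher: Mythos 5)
There is a genuine gap at the heart of your construction: the proposed local trivialization confuses a local \emph{section} of $\kappa$ with its \emph{fibers}. For $S\in\CL^{F_0}(X,F_0)$ one has $S+\idp^{F_0}_E=\idp^{F_0}_{(I+S)(E)}$ (this is exactly the content of Lemma \ref{lem:I-F-affine}): an idempotent with kernel $F_0$ and range $(I+S)(E)$. So $\Theta_{F_0}(E,S)=S+\idp^{F_0}_E$ does \emph{not} lie in $\CI(X)_E=\kappa^{-1}(E)$ unless $S=0$, and the image of $\Theta_{F_0}$ is only the set $\CI(X)^{F_0}$ of idempotents with kernel $F_0$ --- which meets each fiber in exactly one point --- rather than all of $\kappa^{-1}(\CC_{F_0})$. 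The actual fiber is $\CI(X)_E=\CL^{E}(X,E)+\idp^{F_0}_E$, whose direction space $\CL^{E}(X,E)$ \emph{varies with} $E$; accordingly the second component $\idp-\idp^{F_0}_{\idp(X)}$ of your chart $\mu_{E_0,F_0}$ lands in $\CL^{\idp(X)}(X,\idp(X))$, not in the fixed space $\CL^{F_0}(X,F_0)$ (nor in $\CL^{E_0}(X,E_0)$), so $\mu_{E_0,F_0}$ is not a bijection onto an open subset of a fixed Banach space and cannot serve as a chart. The same confusion resurfaces in your treatment of (B3), where $\idp^{F_0}_{E}-\idp^{F_1}_{E}$ is asserted to lie in $\CL^{F_0}(X,F_0)$; since both idempotents have range $E$, it lies in $\CL^{E}(X,E)$.

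The missing ingredient is an analytic identification of the varying fiber directions $\CL^{E}(X,E)$ with the fixed model $\CL^{E_0}(X,E_0)$, and this is not mere bookkeeping. The paper supplies it by conjugation: setting $\Xi_{E_0,F_0}(E):=\idp^{F_0}_E+\idp^{E_0}_{F_0}=I+\pi_{E_0,F_0}(E)\in\GL(X)$ (invertible by Lemma \ref{lem:square-zero}, with inverse $2I-\Xi_{E_0,F_0}(E)$), which maps $E_0$ onto $E$ and fixes $F_0$, one trivializes by $\Theta_{E_0,F_0}(E,\idp):=\Ad\big(\Xi_{E_0,F_0}(E)\big)(\idp)$ for $\idp\in\CI(X)_{E_0}$; this genuinely carries the model fiber $\CI(X)_{E_0}$ onto $\CI(X)_E$, is affine on fibers, and depends analytically on the chart coordinate $\pi_{E_0,F_0}(E)$, and the chart \eqref{eqt:anal-atlas-IX} is obtained by composing its inverse with $\pi_{E_0,F_0}\times\phi_{E_0,F_0}$. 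Your correct observations --- the continuity of $\kappa$ (though the uniform bound $\sup_n\|\idp^{F_0}_{E_n}\|<\infty$ needed there deserves an argument), the analytic section $E\mapsto\idp^{F_0}_E$, and the appeal to Lemma \ref{lem:comp-affine-subsp} for (B3) --- all survive, but they must be routed through this conjugation; without it neither (B2) nor the atlas can be set up.
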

\begin{proof}
We first establish the continuity of $\kappa$. 
For this, let us consider a sequence $\big\{\idp^{F_n}_{E_n} \big\}_{n\in \BN}$ in $\CI(X)$ converging to $\idp^{F_0}_{E_0} \in \CI(X)$; i.e., 
\begin{equation*}\label{eqt:I-Q-Q-invert}
\big\|I - \big(\idp^{E_n}_{F_n} + \idp^{F_0}_{E_0}\big)\big\| = \big\|\big(\idp^{F_n}_{E_n} + \idp^{E_0}_{F_0}\big) - I\big\| = 
\big\|\idp^{F_n}_{E_n} - \idp^{F_0}_{E_0}\big\| \to 0.
\end{equation*}
As $\CL(X)$ is a unital Banach algebra, we know that $\idp^{E_n}_{F_n} + \idp^{F_0}_{E_0}$ and $\idp^{F_n}_{E_n} + \idp^{E_0}_{F_0}$ are eventually invertible, and we assume that they are invertible for all $n\in \BN$. 
The relation $\big(\idp^{F_n}_{E_n} + \idp^{E_0}_{F_0}\big)(X) = X$ and $\ker \big(\idp^{E_n}_{F_n} + \idp^{F_0}_{E_0}\big) = \{0\}$ will then imply that $E_n\in \CC_{F_0}$. 

By Corollary \ref{cor:conv-in G(X)}, we need to show that $\idp^{F_0}_{E_n}\to \idp^{F_0}_{E_0}$.
Indeed, it is clear that 
\begin{equation}\label{eqt:QQ=Q}
\idp^{F_n}_{E_n}\circ \idp^{F_0}_{E_n} = \idp^{F_0}_{E_n}
\quad \text{and}\quad \idp^{F_0}_{E_0} \circ \idp^{F_0}_{E_n} = \idp^{F_0}_{E_0}.
\end{equation}
Moreover, one has  $\big\| \idp^{F_{n}}_{E_{n}} - \idp^{F_0}_{E_0}\big\| \leq 1/2$ when $n$ is large, and in this case,  
\begin{align*}
\big\| \idp^{F_0}_{E_n}(x) \big\| & = \big\|\idp^{F_n}_{E_n}\big(\idp^{F_0}_{E_n}(x)\big)\big\| 
 \leq  \big\|\idp^{F_0}_{E_0}\big(\idp^{F_0}_{E_n}(x)\big)\big\| + \big\| \idp^{F_0}_{E_n}(x) \big\|/2 \qquad (x\in X), 
\end{align*}
which implies $\big\| \idp^{F_0}_{E_n} \big\| \leq 2 \big\| \idp^{F_0}_{E_0} \big\|$. 
It follows that 
$$\lambda_0 := {\sup}_{n\in \BN}\ \! \big\|\idp^{E_n}_{F_0}\big\| 
< \infty.$$ 
Now, for any $\epsilon > 0$, there is $n_0$ such that $\big\|\idp^{F_n}_{E_n} - \idp^{F_0}_{E_0}\big\| < \epsilon$ whenever $n\geq n_0$. 
It then follows from $\idp^{F_0}_{E_0}\circ\idp^{E_n}_{F_0} = 0$ and Relation \eqref{eqt:QQ=Q} that for any  $x\in X$,
\begin{align*}
\big\|\idp^{F_n}_{E_n}(x) - \idp^{F_0}_{E_n}(x)\big\| 
& 
= \big\|\idp^{F_n}_{E_n}\big(\idp^{E_n}_{F_0}(x)\big) - \idp^{F_0}_{E_0}\big(\idp^{E_n}_{F_0}(x)\big)\big\| < \epsilon \lambda_0\|x\|.
\end{align*}
Thus, we have $\big\|\idp^{F_0}_{E_n}-\idp^{F_n}_{E_n}\big\|\to 0$. 	
This, together with $\|\idp^{F_n}_{E_n} - \idp^{F_0}_{E_0}\| \to 0$, implies the required convergence, and hence $\kappa$ is continuous.

On the other hand, it follows from Lemma \ref{lem:I-F-affine} that $\kappa^{-1}(E)$ is an affine-Banach subspace of $\CL(X)$, for each $E\in \SG(X)$; in other words, Condition (B1) in Definition \ref{defn:aff-Ban}(a) is satisfied. 

Let us now show that $(\CI(X), \SG(X), \kappa)$ satisfies Condition (B2) of Definition \ref{defn:aff-Ban}(a). 
We will do this via the construction of an analytic (and hence continuous) local right inverse for the evaluation maps from $\GL(X)$ to orbits of the action $\alpha$ as in \eqref{eqt:def-alpha}. 
For this, we fix arbitrary elements $E_0, F_0\in \SG(X)$ with $E_0\smallT F_0$. 
Consider $E\in \CC_{F_0}$. 
Since $\idp^{F_0}_E - \idp^{F_0}_{E_0}\in \CL^{F_0}(X, F_0)$, we know from Lemma \ref{lem:square-zero} that $\big(I + \big(\idp^{F_0}_{E} - \idp^{F_0}_{E_0}\big)\big)^{-1} =I - \big(\idp^{F_0}_{E} - \idp^{F_0}_{E_0}\big)$. 
Therefore, if we set 
\begin{equation}\label{eqt:def-Xi}
\Xi_{E_0,F_0}(E):= \idp^{F_0}_E + \idp^{E_0}_{F_0},
\end{equation}
then $\Xi_{E_0,F_0}(E)\in \GL(X)$ and 
\begin{equation}\label{eqt:Xi-inv}
\Xi_{E_0,F_0}(E)^{-1}  
= \idp^E_{F_0} + \idp^{F_0}_{E_0} 
= 2I - \Xi_{E_0,F_0}(E).
\end{equation} 
Recall that $\big(\CC_{F_0}, \pi_{E_0,F_0}, \CL^{F_0}(X, F_0)\big)$ is a local chart for $\SG(X)$ near $E_0$ (see \eqref{eqt:anal-loc-atlas}). 
As 
\begin{equation}\label{eqt:Xi-pi-inv}
\Xi_{E_0,F_0}\big(\pi_{E_0,F_0}^{-1}(R)\big) 
=  I + R\qquad (R\in \CL^{F_0}(X, F_0)),
\end{equation}
the map $\Xi_{E_0,F_0}: \CC_{F_0} \to \GL(X)$ is analytic. 
Moreover, one has 
\begin{equation}\label{eqt:Xi-E0}
\alpha\big(\Xi_{E_0,F_0}(E),E_0\big) 
=\idp^{F_0}_E(E_0) =\idp^{F_0}_E(X)= E.
\end{equation}
Consequently, $\Xi_{E_0,F_0}$ is an analytic local right inverse for the evaluation map at $E_0$ from $\GL(X)$ to the orbit $\alpha(\GL(X),E_0)$. 

We now define 
$\Theta_{E_0,F_0}: \CC_{F_0} \times \CI(X)_{E_0}\to \kappa^{-1}(\CC_{F_0})$ by  
\begin{equation}\label{eqt:defn-Theta}
\Theta_{E_0,F_0}(E, \idp) := \Ad\big(\Xi_{E_0,F_0}(E)\big)(\idp) \qquad (E\in \CC_{F_0}, \idp\in \CI(X)_{E_0}).
\end{equation}
For $E\in \CC_{F_0}$ and $F\in \CC_{E_0}$, we know from \eqref{eqt:defn-Ad-act} and \eqref{eqt:Xi-E0} that
\begin{equation}\label{eqt:Theta-E0-F0}
\Theta_{E_0,F_0}\big(E,\idp^F_{E_0}\big) = \idp^{\Xi_{E_0,F_0}(E)(F)}_{\Xi_{E_0,F_0}(E)(E_0)} = \idp^{(\idp^{F_0}_{E}+ \idp^{E_0}_{F_0})(F)}_E.
\end{equation}
This implies that $\Theta_{E_0,F_0}$ is well-defined 
and injective (since $\Xi_{E_0,F_0}(E)$ is invertible). 
Furthermore, it is clear from the definition that $\Theta_{E_0,F_0}$ is fiberwise affine. 
On the other hand, for any $E'\in \CC_{F_0}$ and $F'\in \CC_{E'}$, if we set $F'':= \Xi_{E_0,F_0}(E')^{-1}(F')$, then $F''\in \CC_{E_0}$ because $\Ad\big(\Xi_{E_0,F_0}(E')^{-1}\big)\big(\idp^{F'}_{E'}\big) = \idp^{F''}_{E_0}$, and  so, $\Theta_{E_0,F_0}(E', \idp^{F''}_{E_0}) = \idp^{F'}_{E'}$. 
This means that $\Theta_{E_0,F_0}$ is surjective. 

In the following, we establish the bi-continuity of $\Theta_{E_0,F_0}$. 
Let $\{F_n\}_{n\in \BN}$ and 
$\{E_n\}_{n\in \BN}$ be sequences in $\CC_{E_0}$ and in $\CC_{F_0}$, respectively. 
If $\{E_n\}_{n\in \BN}$ converges to $E\in \CC_{F_0}$ and $\{\idp^{F_n}_{E_0} \}_{n\in \BN}$ converges to $\idp^F_{E_0}\in \CI(X)_{E_0}$, then it follows 
from the continuity of $\Xi_{E_0,F_0}$ and \eqref{eqt:Xi-inv} that 
$$\Theta_{E_0,F_0}\big(E_n, \idp^{F_n}_{E_0}\big) = \Xi_{E_0,F_0}(E_n)\idp^{F_n}_{E_0}\big(2I - \Xi_{E_0,F_0}(E_n)\big) \to \Theta_{E_0,F_0}\big(E, \idp^{F}_{E_0}\big).$$ 
Conversely, assume that $\Theta_{E_0,F_0}\big(E_n, \idp^{F_n}_{E_0}\big)\to \Theta_{E_0,F_0}\big(E, \idp^{F}_{E_0}\big)$. 
The continuity of $\kappa$ and Relation \eqref{eqt:Theta-E0-F0} give $E_n \to E$ and $\Xi_{E_0,F_0}(E_n)(F_n) \to \Xi_{E_0,F_0}(E)(F)$. 
Consequently, the continuity of $\Xi_{E_0,F_0}$ tells us that 
$$F_n = \big(2I - \Xi_{E_0,F_0}(E_n)\big)\big(\Xi_{E_0,F_0}(E_n)(F_n)\big) \to F.$$
From this, we know that $\idp^{F_n}_{E_0} \to \idp^{F}_{E_0}$. 

We are now ready to construct a Banach manifold structure on $\CI(X)$ that is compatible with the norm topology. 
For every $E_0\in \SG(X)$ and $F_0\in \CC_{E_0}$, we consider the bijection $\phi_{E_0,F_0}: \CI(X)_{E_0}\to \CL^{E_0}(X,E_0)$ induced by Lemma \ref{lem:I-F-affine}; namely,
$$\phi_{E_0,F_0}(\idp) := \idp - \idp^{F_0}_{E_0}  \qquad (\idp\in \CI(X)_{E_0}).$$ 
Set $\mu_{E_0,F_0}: \kappa^{-1}(\CC_{F_0}) \to \CL^{F_0}(X,F_0)\oplus \CL^{E_0}(X, E_0)$ to be the map $\big(\pi_{E_0,F_0}\times \phi_{E_0,F_0}\big)\circ \Theta_{E_0,F_0}^{-1}$. 
We claim that 
\begin{equation}\label{eqt:anal-atlas-IX}
\big\{\big(\kappa^{-1}(\CC_{F_0}), \mu_{E_0,F_0}, \CL^{F_0}(X,F_0)\oplus \CL^{E_0}(X, E_0)\big): E_0,F_0\in \SG(X); F_0\smallT E_0 \big\}
\end{equation}
is an analytic atlas for $\CI(X)$. 

In fact, $\mu_{E_0,F_0}$ is a homeomorphism since $\pi_{E_0,F_0}$, $\phi_{E_0,F_0}$ and $\Theta_{E_0,F_0}$ are homeomorphisms.
Notice also that if $E\in \CC_{F_0}$ and $F\in \CC_E$, then we have 
\begin{align}
\mu_{E_0,F_0}\big(\idp^{F}_E\big) 
& = \big(\idp^{F_0}_E - \idp^{F_0}_{E_0}, \Ad(\idp^E_{F_0} + \idp^{F_0}_{E_0})(\idp^{F}_E) - \idp^{F_0}_{E_0}\big)\label{eqt:mu-E-F-1}\\
& = \big(\idp^{F_0}_E - \idp^{F_0}_{E_0},
\idp^{F_0}_{E_0}\idp^{F}_E\idp^{E_0}_{F_0}\big)\label{eqt:mu-E-F}, 
\end{align}
because of \eqref{eqt:QFE(E0)}, 
\eqref{eqt:QQ=Q} and \eqref{eqt:Theta-E0-F0}. 
Moreover, for $(R,S)\in  \CL^{F_0}(X,F_0)\oplus \CL^{E_0}(X, E_0)\big)$, one has 
\begin{align}
\mu_{E_0,F_0}^{-1}(R,S) 
& = (I+R)\big(S+\idp^{F_0}_{E_0}\big)(I-R) \label{eqt:mu-E-F-inv-1}\\
& = S - SR + \idp^{F_0}_{E_0} + RS - RSR + R, \label{eqt:mu-E-F-inv} 
\end{align}
because of \eqref{eqt:Xi-pi-inv}, Lemma \ref{lem:square-zero} as well as the facts that $R(X)\subseteq F_0$ and $R\idp^{F_0}_{E_0} = R$. 

Assume now that $E_1,F_1\in \SG(X)$ with $E_1\smallT F_1$ such that $\kappa^{-1}(\CC_{F_0})\cap \kappa^{-1}(\CC_{F_1})\neq \emptyset$. 
Consider an arbitrary element $(R,S)\in \mu_{E_0,F_0}\big(\kappa^{-1}(\CC_{F_0}\cap \CC_{F_1})\big)$.  
By Lemma \ref{lem:I-F-affine}, there exist unique elements $E_{R}\in \CC_{F_0}$ 
with $R = \idp^{F_0}_{E_{ R}} - \idp^{F_0}_{E_0}$. 
These produce, via 
\eqref{eqt:mu-E-F-1} as well as \eqref{eqt:mu-E-F-inv-1},
\begin{align*}\label{eqt:mu-mu-inv}
\mu_{E_1,F_1}\big(\mu_{E_0,F_0}^{-1} (R, S)\big) 
& = \Big(\idp^{F_1}_{E_{R}} - \idp^{F_1}_{E_1}, \Ad\big(\idp^{E_{R}}_{F_1} + \idp^{F_1}_{E_1}\big)\big(\Ad\big(I+R\big)\big(S + \idp^{F_0}_{E_0}\big)\big) - \idp^{F_1}_{E_1}\Big).
\end{align*}
Since $\big\{\big(\CC_{F}, \pi_{E,F}, \CL^{F}(X,{F})\big): E,F\in \SG(X); F\smallT E \big\}$ is an analytic atlas of $\SG(X)$, the map from $\pi_{E_0,F_0}(\CC_{F_0}\cap \CC_{F_1})$ onto $\pi_{E_1,F_1}(\CC_{F_0}\cap \CC_{F_1})$ that sends $\idp^{F_0}_{E} - \idp^{F_0}_{E_0}$ to $\idp^{F_1}_{E} - \idp^{F_1}_{E_1}$  is analytic. 
In other words, if we set 
\begin{equation*}\label{eqt:def-Phi}
\Phi(R):= \idp^{F_1}_{E_{R}}- \idp^{F_1}_{E_1},
\end{equation*}
then $(R, S) \mapsto \Phi(R)$ is an analytic map from $\pi_{E_0,F_0}(\CC_{F_0}\cap \CC_{F_1})\times \CL^{E_0}(X,E_0)$ to $\pi_{E_1,F_1}(\CC_{F_0}\cap \CC_{F_1})$. 
Thus, the assignment
\begin{equation*}\label{eqt:RS}
(R,S) \mapsto \big(I - \Phi(R)\big)\big(I+R\big)\big(S + \idp^{F_0}_{E_0}\big)\big(I-R\big)\big(I+ \Phi(R)\big) - \idp^{F_1}_{E_1}
\end{equation*}
 is also analytic. 
Consequently, $\mu_{E_1,F_1}\circ \mu_{E_0,F_0}^{-1}$ is analytic on $\mu_{E_0,F_0}(\CC_{F_0}\cap \CC_{F_1})$, and \eqref{eqt:anal-atlas-IX} is an analytic atlas for $\CI(X)$. 

Next, we will show that, when equipped with the above manifold structure, $\CI(X)$ is a Banach submanifold of $\CL(X)$, by verifying that the inclusion map $\iota: \CI(X) \to \CL(X)$ is an analytic immersion. 
Let us fix $E_0\in \SG(X)$ and $F_0\in \CC_{E_0}$. 
Set $\theta_{E_0,F_0}$ to be the map $\iota \circ \mu_{E_0,F_0}^{-1}: \CL^{F_0}(X, F_0)\oplus \CL^{E_0}(X, E_0) \to \CL(X)$. 
By \eqref{eqt:mu-E-F-inv}, one has
\begin{align}\label{eqt:theta-E0-F0}
\theta_{E_0,F_0}(R,S) & 
= S - SR + \idp^{F_0}_{E_0} + RS - RSR + R.
\end{align}
Hence, $\iota$ is analytic. 
Consider 
$$\mathbf{T}(\iota): \mathbf{T}(\CI(X)) \to \mathbf{T}(\CL(X))$$ 
to be the map between the respective tangent bundles induced by $\iota$. 
We need to show that the map 
$$\mathbf{T}_{\idp^{F_0}_{E_0}}(\iota) = 
\theta_{E_0,F_0}'(0,0)$$ 
 (here, $\theta_{E_0,F_0}'$ is the derivative of $\theta_{E_0,F_0}$) will send $\CL^{F_0}(X, F_0)\oplus \CL^{E_0}(X, E_0)$ bijectively onto a complemented subspace of $\CL(X)$. 

As $\CL^{F_0}(X, F_0)\oplus \CL^{E_0}(X, E_0)$ is already a complemented subspace of $\CL(X)$, this claim is established if one can show that $\theta_{E_0,F_0}'(0,0)$ is the inclusion map. 
To see this, we observe that for every $\epsilon \in (0,1)$ and $(R,S)\in \CL^{F_0}(X, F_0)\oplus \CL^{E_0}(X, E_0)$, with $\|R\| + \|S\| < \epsilon$, one has, via \eqref{eqt:theta-E0-F0}, 
\begin{align*}
\| \theta_{E_0,F_0}(R,S) - \theta_{E_0,F_0}(0,0) - (R+S)\| 
& = \big\| RS - SR - RSR\big\|  < \epsilon^2(2+\epsilon).
\end{align*}
This gives $\theta_{E_0,F_0}'(0,0)(R,S) = R+S$, as required.

Finally,  we will establish that $\big(\CI(X), \SG(X), \kappa\big)$ is a locally trivial $\BK$-analytic affine-Banach bundle. 
Indeed, as $\pi_{E_0,F_0} \circ \kappa\circ \mu_{E_0,F_0}^{-1}(R,S) = R$, 
we know that $\kappa: \CI(X)\to \SG(X)$ is an analytic. 
Moreover, since the definition of the analytic atlas as in \eqref{eqt:anal-atlas-IX} is defined via the map $\Theta_{E_0,F_0}$ as well as the bi-analytic maps $\pi_{E_0,F_0}$ and $\phi_{E_0,F_0}$, it is a tautology that $\Theta_{E_0,F_0}$ is bi-analytic; i.e., Condition (B2) of Definition \ref{defn:aff-Ban}(b) holds. 

Suppose that $\varphi$ is the map as in Condition (B3) for $\Theta_{E_1,F_1}^{-1}\circ \Theta_{E_0, F_0}$.
It follows from  \eqref{eqt:Xi-pi-inv} and Lemma \ref{lem:square-zero} that for any $E\in \CC_{F_0}\cap \CC_{F_1}$ and $\idp\in \CI(X)_{E_0}$, one has
\begin{align*}
\varphi(E)(\idp) 
& = \Ad\big(\Xi_{E_1,F_1}(E)^{-1}\big)\circ \Ad\big(\Xi_{E_0,F_0}(E)\big)(\idp) \\
& = \big(I - \pi_{E_1,F_1}(E)\big)\big(I+\pi_{E_0,F_0}(E)\big)\idp\big(I-\pi_{E_0,F_0}(E)\big)\big(I + \pi_{E_1,F_1}(E)\big).
\end{align*}
Let us define $\psi: \CC_{F_0}\cap \CC_{F_1}\to \CL(\CL(X))$ by 
\begin{align*}
\psi(E)(T) 
& = \Ad\big(I - \pi_{E_1,F_1}(E)\big)\circ \Ad\big(I+\pi_{E_0,F_0}(E)\big)(T) \qquad (T\in \CL(X)). 
\end{align*}
Since both $\pi_{E_0,F_0}$ and $\pi_{E_1,F_1}$ are analytic, we know that the map $\psi$ is analytic.
On the other hand, as $\CL^{E_1}(X,E_1)\in \SG(\CL(X))$, we obtain 
a continuous affine map 
$$\Gamma: \CL(\CL(X))\to \CA\big(\CI(X)_{E_0}, \CI(X)_{E_1}\big)$$ 
satisfying the condition in Lemma \ref{lem:comp-affine-subsp}. 
It follows that
$$\Gamma(\psi(E)) = \psi(E)|_{\CI(X)_{E_0}} = \varphi(E) \qquad (E\in \CC_{F_0}\cap \CC_{F_1}).$$ 
Since $\Gamma\circ \psi$ is an analytic map from $\CC_{F_0}\cap \CC_{F_1}$ to $\CA\big(\CI(X)_{E_0}, \CI(X)_{E_1}\big)$, Condition (B3) of Definition \ref{defn:aff-Ban}(b) is satisfied. 
\end{proof}



\medskip

\begin{eg}\label{eg:I(C2)}
We equip $\BC^2$ with the usual Euclidean norm. 
For $\lambda\in \BC$, we set $E_\lambda:=\left\{ \begin{bmatrix}
a \\
\lambda a
\end{bmatrix}:a\in \BC\right\}$
and $E_\infty:= \left\{ \begin{bmatrix}
0 \\
b
\end{bmatrix}:b\in \BC\right\}$.
Then $\SG(\BC^2) = \big\{E_\lambda:\lambda\in \BC\cup \{\infty\} \big\}$ 
and 
$$\CI(\BC^2) = \left\{ \begin{bmatrix}
1 & \gamma\\
0 & 0
\end{bmatrix}:\gamma\in \BC\right\}
\cup 
\left\{ \begin{bmatrix}
1-\alpha & \alpha/\lambda\\
\lambda(1-\alpha) & \alpha
\end{bmatrix}:\alpha, \lambda\in \BC; \lambda\neq 0\right\}
\cup 
\left\{ \begin{bmatrix}
0 & 0\\
\delta & 1
\end{bmatrix}:\delta\in \BC\right\}.$$

Notice also that $E_0\smallT E_\infty$, 
$\idp^{E_\infty}_{E_0} = \begin{bmatrix}
1 & 0\\
0 & 0
\end{bmatrix}$ and 
$\idp^{E_0}_{E_\infty} = \begin{bmatrix}
0 & 0\\
0 & 1
\end{bmatrix}$. 
Moreover, for each $\lambda\in \BC\setminus \{0\}$, we have $E_\lambda \smallT E_\infty$,  $E_\lambda \smallT E_0$, 
$\idp^{E_\infty}_{E_\lambda} = \begin{bmatrix}
1 & 0\\
\lambda & 0
\end{bmatrix}$ 
as well as 
$\idp^{E_0}_{E_\lambda} = \begin{bmatrix}
0 & 1/\lambda\\
0 & 1
\end{bmatrix}$. 

On the other hand, $\CI(\BC^2)_{E_0} = \left\{ \begin{bmatrix}
1 & \gamma\\
0 & 0
\end{bmatrix}:\gamma\in \BC\right\}$, 
$\CI(\BC^2)_{E_\infty} = \left\{ \begin{bmatrix}
0 & 0\\
\delta & 1
\end{bmatrix}:\delta\in \BC\right\}$,  
$$\CL^{E_\infty}(\BC^2, E_\infty) = \left\{ \begin{bmatrix}
0 & 0\\
\delta & 0
\end{bmatrix}:\delta\in \BC\right\}\quad \text{and} 
\quad \CL^{E_0}(\BC^2, E_0) = \left\{ \begin{bmatrix}
0 & \gamma\\
0 & 0
\end{bmatrix}:\gamma\in \BC\right\}.$$ 
Furthermore, for $\lambda \in \BC\setminus \{0\}$, one has $\CI(\BC^2)_{E_\lambda} = \left\{ \begin{bmatrix}
1-\alpha & \alpha/\lambda\\
\lambda(1-\alpha) & \alpha
\end{bmatrix}:\alpha\in \BC\right\}$  and 
$$\CL^{E_\lambda}(\BC, E_\lambda) = \left\{ \alpha\begin{bmatrix}
-1 & 1/\lambda\\
-\lambda & 1
\end{bmatrix}:\alpha\in \BC\right\}.$$

The map $\pi_{E_0, E_\infty}:\CC_{E_\infty}  = \{E_\lambda:\lambda\in \BC \}\to \CL^{E_\infty}(\BC^2, E_\infty)$ is given by
$$\pi_{E_0, E_\infty}\left(E_\lambda\right) =  \begin{bmatrix}
0 & 0\\
\lambda & 0
\end{bmatrix} \qquad (\lambda \in \BC).$$
We also have $\CC_{E_0} = \big\{E_\lambda:\lambda\in \BC\setminus \{0\} \big\}\cup \{E_\infty \}$. 
The map $\pi_{E_\infty, E_0}:\CC_{E_0}\to \CL^{E_0}(\BC^2, E_0)$ is given by
$$\pi_{E_\infty, E_0}\left(E_\infty\right) =  \begin{bmatrix}
0 & 0\\
0 & 0
\end{bmatrix}
\qquad \text{and} \qquad
\pi_{E_\infty, E_0}\left(E_\lambda\right) =  \begin{bmatrix}
0 & 1/\lambda\\
0 & 0
\end{bmatrix} 
\quad (\lambda \in \BC\setminus \{0\}).$$

Consider $\lambda \in \BC$. 
If $\lambda\neq 0$, then $\mu_{E_0,E_\infty}: \kappa^{-1}(E_\lambda)\to \CL^{E_\infty}(\BC^2, E_\infty)\oplus \CL^{E_0}(\BC^2, E_0)$ is given by 
$$\mu_{E_0,E_\infty}\left(\begin{bmatrix}
1-\alpha & \alpha/\lambda\\
\lambda(1-\alpha) & \alpha
\end{bmatrix}\right) = \left(\begin{bmatrix}
0 & 0\\
\lambda & 0
\end{bmatrix},  \begin{bmatrix}
0 & \alpha/\lambda\\
0 & 0
\end{bmatrix}\right).$$
In the case when $\lambda = 0$, the map $\mu_{E_0,E_\infty}: \kappa^{-1}(E_0)\to \CL^{E_\infty}(\BC^2, E_\infty)\oplus \CL^{E_0}(\BC^2, E_0)$ is given by 
$\mu_{E_0,E_\infty}\left(\begin{bmatrix}
1 & \gamma\\
0 & 0
\end{bmatrix}\right) = \left(\begin{bmatrix}
0 & 0\\
0 & 0
\end{bmatrix},  \begin{bmatrix}
0 & \gamma\\
0 & 0
\end{bmatrix}\right).$

On the other extreme, the map $\mu_{E_\infty,E_0}: \kappa^{-1}(\CC_{E_0})\to \CL^{E_0}(\BC^2, E_0)\oplus \CL^{E_\infty}(\BC^2, E_\infty)$ is given by 
$$\mu_{E_\infty,E_0}\left(\begin{bmatrix}
1-\alpha & \alpha/\lambda\\
\lambda(1-\alpha) & \alpha
\end{bmatrix}\right) = \left(\begin{bmatrix}
0 & 1/\lambda\\
0 & 0
\end{bmatrix},  \begin{bmatrix}
0 & 0\\
\lambda(1-\alpha) & 0
\end{bmatrix}\right)$$ 
and
$\mu_{E_\infty, E_0}\left(\begin{bmatrix}
0 & 0\\
\delta & 1
\end{bmatrix}\right) = \left(\begin{bmatrix}
0 & 0\\
0 & 0
\end{bmatrix},  \begin{bmatrix}
0 & 0\\
\delta & 0
\end{bmatrix}\right)$. 

The two charts 
$\big(\kappa^{-1}(\CC_{E_\infty}), \mu_{E_0,E_\infty}, \CL^{E_\infty}(X,E_\infty)\oplus \CL^{E_0}(X, E_0)\big)$ and  
$$\big(\kappa^{-1}(\CC_{E_0}), \mu_{E_\infty,E_0}, \CL^{E_0}(X,E_0)\oplus \CL^{E_\infty}(X, E_\infty)\big)$$
form an analytic atlas for $\CI(\BC^2)$, which produces the Banach submanifold structure induced from $M_2(\BC)$. 
\end{eg}

\medskip

There is another way to consider fibration of $\CI(X)$ over $\SG(X)$, namely, through the map $\kappa': \CI(X)\to \SG(X)$ that sends $\idp$ to $\ker \idp$. 
The same conclusion as in Theorem \ref{thm:main} holds for $(\CI(X),\SG(X), \kappa')$.

\medskip

Let us denote by $\Inv(X)$ the set of ``self-inverse mappings''; i.e. 
$$\Inv(X): = \big\{V\in \GL(X)\setminus \{I, -I\} : V^2 = I\big\}.$$
For each $V\in \Inv(X)$, let us denote $X^V:= \{x\in X: V(x)= x \}$, and set $\bar \kappa: \Inv(X) \to \SG(X)$ to be the map given by $\bar \kappa(V) := X^V$.  
Since the bi-analytic bijection  $T \mapsto 2T - I $ sends $\CI(X)$ onto $\Inv(X)$, Theorem \ref{thm:main} tells us that $\Inv(X)$ is a Banach submanifold of $\CL(X)$. 
More precisely, define $\bar \mu_{E,F}: \bar \kappa^{-1}(\CC_{F}) \to \CL^{F}(X,F)\oplus \CL^{E}(X, E)$ by 
$$\bar \mu_{E,F}(V):= \big( \idp^F_{(I_X+V)(X)} - \idp^F_E, \idp^F_EV\idp^E_F/2\big) \qquad (V\in \Inv(X)_E)$$
(see \eqref{eqt:mu-E-F}). 
Then 
$\bar \mu_{E,F}^{-1}(R,S)= 2(RS+ R-RSR + S -SR) +  \idp^{F}_E - \idp^{E}_F$
(see \eqref{eqt:mu-E-F-inv}), and 
$\big\{\bar \kappa^{-1}(\CC_{F}), \bar \mu_{E,F}, \CL^{F}(X,F)\oplus \CL^{E}(X, E): E,F\in \SG(X); F\smallT E \big\}$
is an analytic atlas for the Banach submanifold structure on $\Inv(X)$ induced from $\CL(X)$.
Furthermore, $(\Inv(X), \SG(X), \bar \kappa)$ is a locally trivial analytic affine-Banach bundle over $\SG(X)$.

\medskip

Another disguised form of $\CI(X)$ is the subspace 
$$\SG(X)\times_\CC \SG(X) :=\{(E,F)\in \SG(X)\times \SG(X): F \smallT E \}$$ 
of $\SG(X)\times \SG(X)$. 
We will say some words about this subspace in the following.

\medskip

\begin{cor}\label{cor:I=GxG}
Suppose that $\mathcal{T}$ is the topology on $\SG(X)\times_\CC \SG(X)$ induced from  the product topology on $\SG(X)\times \SG(X)$. 
There is a $\BK$-Banach manifold structure on $\SG(X)\times_\CC \SG(X)$ compatible with $\mathcal{T}$ such that under the projection $\kappa_1: \SG(X)\times_\CC \SG(X) \to \SG(X)$ onto the first coordinate, one obtain a locally trivial analytic affine-Banach bundle structure on $\SG(X)\times_\CC \SG(X)$.
\end{cor}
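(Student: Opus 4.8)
The plan is to transport the entire affine-Banach bundle structure of $\big(\CI(X), \SG(X), \kappa\big)$ established in Theorem~\ref{thm:main} along the natural bijection between $\CI(X)$ and $\SG(X)\times_\CC \SG(X)$. First I would define the map
$$\Sigma: \CI(X) \to \SG(X)\times_\CC \SG(X), \qquad \Sigma(\idp) := \big(\idp(X), \ker \idp\big),$$
and observe that it is a bijection whose inverse sends $(E,F)$ to $\idp^F_E$; this is exactly the content of the discussion at the start of Section~3 (existence and uniqueness of $\idp^F_E$ with $\idp^F_E(X)=E$ and $\ker\idp^F_E = F$). One checks that $\kappa_1 \circ \Sigma = \kappa$, so $\Sigma$ is fiberwise over $\SG(X)$.

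The key step is to verify that $\Sigma$ is a homeomorphism for the topology $\mathcal{T}$. Continuity of $\Sigma$: the first component $\idp\mapsto \idp(X) = \kappa(\idp)$ is continuous by Theorem~\ref{thm:main}, and the second component $\idp \mapsto \ker\idp = (I-\idp)(X) = \kappa'(I-\idp)$ is continuous since $\idp\mapsto I-\idp$ is continuous (it is affine, indeed the flip onto $\CI(X)$ used in Lemma~\ref{lem:I-F-affine}) and $\kappa'$ is continuous by the remark following Theorem~\ref{thm:main}. For continuity of $\Sigma^{-1}$, fix $E_0, F_0\in \SG(X)$ with $E_0\smallT F_0$; on the open set $\CC_{F_0}\times \CC_{E_0}$ (intersected with $\SG(X)\times_\CC\SG(X)$) the map $\Sigma^{-1}$ is $(E,F)\mapsto \idp^F_E = \mu_{E_0,F_0}^{-1}\big(\pi_{E_0,F_0}(E), \phi_{E_0,F_0}(\idp^F_{E_0})\big)$, which is continuous because $\pi_{E_0,F_0}$ is a homeomorphism, $\mu_{E_0,F_0}^{-1}$ is continuous (Theorem~\ref{thm:main}), and $F\mapsto \idp^F_{E_0}$ is the restriction of $p_{E,F}$-type data — more directly, $F\mapsto \idp^F_{E_0}$ is continuous by Corollary~\ref{cor:conv-in G(X)} applied with the roles of kernel and range swapped. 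Since such sets cover $\SG(X)\times_\CC\SG(X)$, $\Sigma^{-1}$ is continuous.

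Having established that $\Sigma$ is a homeomorphism, I transport everything: declare a chart on $\SG(X)\times_\CC\SG(X)$ to be $\mu_{E_0,F_0}\circ \Sigma^{-1}$ on $\Sigma\big(\kappa^{-1}(\CC_{F_0})\big)$, ranging over all $E_0,F_0\in\SG(X)$ with $F_0\smallT E_0$. Because $\Sigma$ is a homeomorphism, these are homeomorphisms onto open subsets of $\CL^{F_0}(X,F_0)\oplus \CL^{E_0}(X,E_0)$, and their transition maps coincide with those of the atlas \eqref{eqt:anal-atlas-IX} of $\CI(X)$, hence are analytic; so this is an analytic atlas defining a $\BK$-Banach manifold structure on $\SG(X)\times_\CC\SG(X)$ compatible with $\mathcal{T}$, and $\Sigma$ becomes bi-analytic by construction. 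Finally, the local trivializations $\Sigma\circ\Theta_{E_0,F_0}: \CC_{F_0}\times \Upsilon_{E_0}\to \kappa_1^{-1}(\CC_{F_0})$ (where $\Upsilon_{E_0} = \Sigma(\CI(X)_{E_0})$, an affine-Banach space via $\Sigma$), together with the identity $\kappa_1\circ\Sigma = \kappa$, show directly that Conditions (B1), (B2), (B3) of Definition~\ref{defn:aff-Ban}(b) hold for $\big(\SG(X)\times_\CC\SG(X), \SG(X), \kappa_1\big)$, since they hold for $\big(\CI(X),\SG(X),\kappa\big)$ and are preserved under the bi-analytic fiberwise-affine identification $\Sigma$. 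The main obstacle is the bi-continuity of $\Sigma$ in part two — specifically the continuity of $F\mapsto \idp^F_{E_0}$, which one must extract carefully from Corollary~\ref{cor:conv-in G(X)} (or from the explicit formula $\idp^F_{E_0} = I - \idp^{E_0}_{F}$ combined with continuity of $\kappa'$ on the already-constructed manifold, taking care that the argument does not become circular).
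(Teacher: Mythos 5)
Your overall architecture is the same as the paper's: reduce everything to showing that $(E,F)\mapsto\idp^F_E$ is a homeomorphism from $\SG(X)\times_\CC\SG(X)$ onto $\CI(X)$ and then transport the bundle structure of Theorem \ref{thm:main}; the easy direction (continuity of $\idp\mapsto(\idp(X),\ker\idp)$) is handled exactly as in the paper, via the continuity of $\kappa$ and of $\idp\mapsto I-\idp$. The gap is in the hard direction. The identity you rely on there,
$$\idp^F_E = \mu_{E_0,F_0}^{-1}\big(\pi_{E_0,F_0}(E),\ \phi_{E_0,F_0}(\idp^F_{E_0})\big),$$
is false: since $\mu_{E_0,F_0}=(\pi_{E_0,F_0}\times\phi_{E_0,F_0})\circ\Theta_{E_0,F_0}^{-1}$ and, by \eqref{eqt:Theta-E0-F0}, the trivialization $\Theta_{E_0,F_0}$ transports the kernel along with the range, the correct fiber coordinate of $\idp^F_E$ is $\phi_{E_0,F_0}\big(\idp^{F''}_{E_0}\big)$ with $F'':=\Xi_{E_0,F_0}(E)^{-1}(F)$, not $\phi_{E_0,F_0}\big(\idp^{F}_{E_0}\big)$. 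Concretely, in Example \ref{eg:I(C2)} with $E_0$, $F_0=E_\infty$, $E=E_\lambda$ and $F=E_\mu$ (with $\lambda\neq\mu$ both nonzero), your right-hand side evaluates to $\idp^{E_{\lambda+\mu}}_{E_\lambda}$ rather than $\idp^{E_\mu}_{E_\lambda}$.

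Your fallback --- that $F\mapsto\idp^F_{E_0}$ is continuous for the fixed range $E_0$ --- does not close the gap, because it only yields continuity in the second variable separately, whereas what is needed is joint continuity of $(E,F)\mapsto\idp^F_E$. The repair is exactly the paper's argument: write $\idp^F_E=\Theta_{E_0,F_0}\big(E,\idp^{\Xi_{E_0,F_0}(E)^{-1}(F)}_{E_0}\big)$, use \eqref{eqt:Xi-inv} (i.e.\ $\Xi_{E_0,F_0}(E)^{-1}=2I-\Xi_{E_0,F_0}(E)$) together with Corollary \ref{cor:conv-in G(X)} to see that $(E,F)\mapsto\idp^{\Xi_{E_0,F_0}(E)^{-1}(F)}_{E_0}$ is continuous into $\CI(X)_{E_0}$, and then invoke the bi-continuity of $\Theta_{E_0,F_0}$ established in the proof of Theorem \ref{thm:main}. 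Once this is fixed, the remainder of your transport argument (declaring $\mu_{E_0,F_0}\circ\Sigma^{-1}$ to be charts and pushing Conditions (B1)--(B3) through the fiberwise-affine homeomorphism) is correct and is essentially what the paper leaves implicit. (A minor slip: $\ker\idp=\kappa(I-\idp)=\kappa'(\idp)$, not $\kappa'(I-\idp)$.)
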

\begin{proof}
By Theorem \ref{thm:main}, it suffices to show that $\Psi: (E,F)\mapsto \idp^F_E$ is a homeomorphism from $\SG(X)\times_\CC \SG(X)$ onto $\CI(X)$. 
Moreover, thanks to the continuity of $\kappa$, it suffices to establish the continuity of $\Psi$. 
For this, let us consider a sequence $\{(E_n,F_n)\}_{n\in \BN}$ in $\SG(X)\times_\CC \SG(X)$ converging to $(E_0,F_0)\in \SG(X)\times_\CC \SG(X)$. 
By Corollary \ref{cor:conv-in G(X)}, we may assume that $E_n\in \CC_{F_0}$ and $F_n\in \CC_{E_0}$ for all $n\in \BN$. 
Relation \eqref{eqt:Xi-inv} and Corollary \ref{cor:conv-in G(X)} produce $\idp^{\Xi_{E_0,F_0}(E_n)^{-1}(F_n)}_{E_0}\to \idp^{F_0}_{E_0}$.  
Moreover,  the continuity of $\Theta_{E_0,F_0}$ gives the required convergence:
$\idp^{F_n}_{E_n} = \Theta_{E_0,F_0}\big(E_n, \idp^{\Xi_{E_0,F_0}(E_n)^{-1}(F_n)}_{E_0}\big) \to \idp^{F_0}_{E_0}.$
\end{proof}

\medskip

The structure of the above affine-Banach bundle will be state explicitly in the following. 
For $(E_0, F_0)\in \SG(X)\times_\CC\SG(X)$, we define $\check \mu_{E_0,F_0}: \kappa_1^{-1}(\CC_{F_0})\to \CL^{F_0}(X,F_0)\oplus \CL^{E_0}(X, E_0)$ to be the map  
$$\check \mu_{E_0,F_0}(E,F) := \big(\idp^{F_0}_E - \idp^{F_0}_{E_0},
\idp^{F_0}_{E_0}\idp^{F}_E\idp^{E_0}_{F_0}\big) \qquad (E\in \CC_{F_0}; F\in \CC_E).$$
Then 
$\check\mu_{E_0,F_0}^{-1}(R,S) =\big((I_X+R)\big(S+\idp^{F_0}_{E_0}\big)(X), \ker \big(S+\idp^{F_0}_{E_0}\big)(I_X-R)\big)$
and 
$\big\{\check\kappa^{-1}(\CC_{F}), \check \mu_{E,F}, \CL^{F}(X,F)\oplus \CL^{E}(X, E): (E,F)\in \SG(X)\times_\CC\SG(X)\big\}$
is an analytic atlas for $\SG(X)\times_\CC \SG(X)$. 

\medskip

A direct consequence of Corollary \ref{cor:I=GxG} is the well-known fact that $\SG(\BK^n)\times_\CC \SG(\BK^n)$ is not closed in $\SG(\BK^n)\times \SG(\BK^n)$ for any $n\geq 2$.
In fact, $\CI(\BK^n)$ is never norm compact because it contains non-zero affine-Banach subspaces, but $\SG(\BK^n)\times \SG(\BK^n)$ is compact.


\medskip

\section{$\CI(X)$ as a Banach bundle}

\medskip

It is natural to ask if $(\CI(X), \SG(X), \kappa)$ is actually a Banach bundle, instead of an affine-Banach bundle. 
The first proposition in this section is that one can regard $(\CI(X), \SG(X), \kappa)$ as a continuous Banach bundle. 

\medskip

In fact, Proposition \ref{prop:cont-sect-F-Ban-bundle} tells us that it suffices to show the existence of a continuous global cross section for $(\CI(X), \SG(X), \kappa)$. 
In order to construct such a global cross section, let us fix an element $F_E\in \CC_E$ for every $E\in \SG(X)$. 
Since $\SG(X)$ is metrizable (see e.g., \cite[\S 2.1]{Doua}), there is a partition of unity $\{\psi_E \}_{E\in \SG(X)}$, consisting of continuous functions, dominated by the open covering $\{\CC_{F_E} \}_{E\in \SG(X)}$ of $\SG(X)$.
On the other hand, as $\kappa^{-1}(\CC_{F_E})$ is homeomorphic to a trivial affine-Banach bundle, there exists a local cross section on it. 
Now, a standard ``scaled-sum construction'' will produce the required continuous global cross section (observe that as $\CI(X)_E$ is an affine subspace of $\CL(X)$, it is closed under convex combinations). 

\medskip

Let us state this clearly as follows. 

\medskip

\begin{prop}\label{prop:loc-tri-cont-Ban}
If $X$ is a $\BK$-Banach space, then $(\CI(X), \SG(X), \kappa)$ is a locally trivial continuous $\BK$-Banach bundle, under equivalent $\BK$-Banach space structures on all the fibers of $\kappa$. 
\end{prop}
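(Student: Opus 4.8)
The plan is to invoke Proposition \ref{prop:cont-sect-F-Ban-bundle}(a), which reduces the statement to exhibiting a single \emph{continuous global cross section} $\rho: \SG(X) \to \CI(X)$ of $\kappa$; once such a $\rho$ exists, the proposition upgrades the affine-Banach bundle of Theorem \ref{thm:main} to a continuous Banach bundle under equivalent Banach space structures on the fibers, and we are done. So the entire work is in constructing $\rho$, i.e.\ a continuous way of choosing, for each complemented subspace $E$, an idempotent $\idp$ with $\idp(X) = E$.

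First I would fix, for every $E \in \SG(X)$, some complement $F_E \in \CC_E$ (using the axiom of choice), so that $\idp^{F_E}_E \in \CI(X)_E$ is a candidate value; the map $E \mapsto \idp^{F_E}_E$ is a global cross section but \emph{not} continuous, so it must be smoothed out. The open sets $\CC_{F_E}$ ($E \in \SG(X)$) form an open cover of $\SG(X)$, and each $\kappa^{-1}(\CC_{F_E})$ is, by Theorem \ref{thm:main}, a trivial affine-Banach bundle over $\CC_{F_E}$ and hence admits a \emph{continuous} local cross section $\rho_E : \CC_{F_E} \to \CI(X)$ (for instance $\rho_E(E') := \idp^{F_E}_{E'}$, whose continuity on $\CC_{F_E}$ follows from Corollary \ref{cor:conv-in G(X)}). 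Next I would use that $\SG(X)$ is metrizable (cited from \cite{Doua}, \S 2.1), hence paracompact, so there is a continuous partition of unity $\{\psi_E\}_{E \in \SG(X)}$ subordinate to the cover $\{\CC_{F_E}\}_{E \in \SG(X)}$. Then I would define
$$\rho(E') := \sum_{E \in \SG(X)} \psi_E(E')\, \rho_E(E') \qquad (E' \in \SG(X)),$$
where the sum is locally finite, each term $\psi_E(E')\rho_E(E')$ is interpreted as $0$ when $E' \notin \CC_{F_E}$, and the whole expression is a convex combination (since $\sum_E \psi_E \equiv 1$ and $\psi_E \geq 0$) of elements of the affine subspace $\CI(X)_{E'}$ of $\CL(X)$ — here I use Lemma \ref{lem:I-F-affine} that $\CI(X)_{E'}$ is an affine-Banach subspace of $\CL(X)$, so it is closed under convex combinations and $\rho(E') \in \CI(X)_{E'}$, giving $\kappa(\rho(E')) = E'$.

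The remaining point is continuity of $\rho$, which is routine: around any $E'_0$ only finitely many $\psi_E$ are nonzero, and on a small neighborhood each surviving $\rho_E$ is defined and continuous, so $\rho$ is locally a finite sum of products of continuous scalar functions and continuous $\CL(X)$-valued maps, hence continuous. Applying Proposition \ref{prop:cont-sect-F-Ban-bundle}(a) to this $\rho$ yields the result. I do not anticipate a genuine obstacle here; the only thing to be slightly careful about is the bookkeeping for the ``scaled-sum construction'' — namely checking that each partial term extends continuously by zero across $\partial \CC_{F_E}$ (which holds because $\psi_E$ vanishes there) and that the convex-combination structure really keeps $\rho(E')$ inside the correct fiber $\CI(X)_{E'}$ rather than merely inside $\CL(X)$. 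Everything else is standard paracompactness/partition-of-unity machinery combined with the local triviality already established in Theorem \ref{thm:main}.
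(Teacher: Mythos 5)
Your proposal is correct and follows essentially the same route as the paper: reduce to Proposition \ref{prop:cont-sect-F-Ban-bundle}(a), then build a continuous global cross section by a partition of unity subordinate to the cover $\{\CC_{F_E}\}_{E\in\SG(X)}$, combining local sections via convex combinations inside each affine fiber $\CI(X)_{E'}$. The paper's proof is exactly this ``scaled-sum construction,'' so there is nothing to add.
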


\medskip

By Proposition \ref{prop:cont-sect-F-Ban-bundle}(b), the only obstruction for $\CI(X)$ to be identified with a locally trivial analytic Banach bundle is the existence of an analytic global cross section. 
However, a complex analytic global cross section does not exist even in the case when $X = \BC^2$. 

\medskip

\begin{eg}
Let $E_\lambda$ be as in Example \ref{eg:I(C2)}. 
Suppose that there is a complex analytic global cross section $\rho: \SG(\BC^2) \to \CI(\BC^2)$. 
Then one can find a function $\alpha:\BC\setminus \{0\} \to \BC$ satisfying 
$$\rho \circ \pi_{E_0,E_\infty}^{-1}\left(\begin{bmatrix}
0 & 0\\
\lambda & 0
\end{bmatrix}\right) = \begin{bmatrix}
1-\alpha(\lambda) & \alpha(\lambda)/\lambda\\
\lambda(1-\alpha(\lambda)) & \alpha(\lambda)
\end{bmatrix} 
\qquad (\lambda\in \BC\setminus \{0\}).$$
As $\rho\circ \circ \pi_{E_0,E_\infty}^{-1}$ is complex analytic, we know that $\alpha$ is holomorphic. 
Moreover, the compactness of $\SG(\BC^2)$ tells us that the image of $\rho$ is norm-bounded. 
From this, we deduce that the three functions 
$$\lambda\mapsto \alpha(\lambda), \quad \lambda \mapsto \alpha(\lambda)/\lambda \quad \text{and} \quad \lambda \mapsto \lambda(1-\alpha(\lambda))$$ 
are bounded on $\BC\setminus \{0\}$. 
As $\alpha$ is bounded, it has a removable singularity at $0$.
Thus, $\alpha$ extends to a bounded entire function on $\BC$, which can only be a constant function. 
On the other hand, since $\lambda \mapsto \alpha(\lambda)/\lambda$ is bounded as well, we know that $\alpha$ is the constant zero function. 
However, this will contradict with the boundedness of $\lambda \mapsto \lambda(1-\alpha(\lambda))$.
Consequently, there does not exist a complex analytic global cross section on $\CI(\BC^2)$. 
In other words, $\big(\CI(\BC^2), \SG(\BC^2\big), \kappa)$ is not a locally trivial complex analytic Banach bundle. 
\end{eg}

\medskip

More generally, there is no complex differentiable global cross section on $(\CI(X), \SG(X), \kappa)$ for any complex Banach space $X$. 

\medskip

\begin{prop}\label{prop:not-complex-anal}
Let $X$ be a complex Banach space. 
We denote by $\SG(X)_\fin$ the subset of $\SG(X)$ consisting of finite dimensional subspaces, and set 
$$\CI(X)_\fin:= \kappa^{-1}(\SG(X)_\fin).$$ 
Then the subbundle $(\CI(X)_\fin, \SG(X)_\fin, \kappa)$ of $(\CI(X), \SG(X), \kappa)$
is not a locally trivial complex differentiable Banach bundle (see Remark \ref{rem:gen-aff-Ban}(b)). 
\end{prop}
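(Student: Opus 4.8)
The plan is to mimic the argument given for $X=\BC^2$, reducing the general case to that one by restricting to a suitable two-dimensional subspace. By Proposition \ref{prop:cont-sect-F-Ban-bundle}(b) and Remark \ref{rem:gen-aff-Ban}(b), if $(\CI(X)_\fin, \SG(X)_\fin, \kappa)$ were a locally trivial complex differentiable Banach bundle, then (after passing to an isometrically affine isomorphic Banach space structure on each fiber) it would admit a complex differentiable global cross section $\rho: \SG(X)_\fin \to \CI(X)_\fin$. I would assume such a $\rho$ exists and derive a contradiction.

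First I would fix a two-dimensional subspace $Y \subseteq X$ together with a closed complement $Z$ of $Y$ in $X$, so that $X = Y \oplus Z$. For each $E \in \SG(Y)$ (i.e. each one-dimensional subspace of $Y$), the subspace $E \oplus Z$ lies in $\SG(X)_\fin$ only if $Z$ is finite-codimensional --- so to stay inside $\SG(X)_\fin$ I would instead work with the one-dimensional $E$'s directly: $\SG(Y)$ consists of one-dimensional subspaces of $Y$, each of which is a member of $\SG(X)_\fin$. The assignment $E \mapsto E$ is then an analytic embedding of $\SG(Y) \cong \BP^1(\BC)$ into $\SG(X)_\fin$. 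Pulling $\rho$ back along this embedding, and then composing with a fixed local chart $\pi_{E_0,F_0}$ for $\SG(Y)$ over a copy of $\BC$ inside $\SG(X)_\fin$, I get a complex differentiable (hence, by one-variable considerations, holomorphic) family of idempotents $\idp(\lambda) \in \CI(X)$, $\lambda \in \BC$, with $\idp(\lambda)(X) = E_\lambda$, where $E_\lambda$ ranges over the line $\{E_\lambda : \lambda \in \BC\}$ in $\SG(Y)$.

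Next I would extract the contradiction exactly as in the $\BC^2$ case. Writing the idempotent $\idp(\lambda)$ in block form relative to $X = E_0 \oplus F_0$ (with $F_0$ a complement of $E_0$ containing, say, a complement of $Y$ together with the second coordinate line of $Y$), Lemma \ref{lem:I-F-affine} and \eqref{eqt:mu-E-F-inv} show that $\idp(\lambda)$ is an explicit rational expression in a scalar parameter $\alpha(\lambda)$ and in $\lambda$; holomorphy of $\rho \circ \pi_{E_0,F_0}^{-1}$ forces $\alpha$ to be holomorphic on $\BC \setminus \{0\}$. The key extra ingredient is boundedness: unlike $\SG(\BC^2)$, the whole space $\SG(X)_\fin$ need not be compact, but the \emph{image of the line} $\{E_\lambda : \lambda \in \BC\}$ together with its limit point $E_\infty$ \emph{is} a compact subset of $\SG(X)_\fin$ (it is a $\BP^1$), so $\rho$ restricted to it has norm-bounded image. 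As in the example, boundedness of $\lambda \mapsto \alpha(\lambda)$, of $\lambda \mapsto \alpha(\lambda)/\lambda$, and of $\lambda \mapsto \lambda(1 - \alpha(\lambda))$ on $\BC \setminus \{0\}$ forces $\alpha$ to be a bounded entire function, hence constant, then $\alpha \equiv 0$, then a contradiction with boundedness of $\lambda \mapsto \lambda(1-\alpha(\lambda))$.

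The main obstacle I anticipate is bookkeeping rather than conceptual: making sure the compact $\BP^1$ inside $\SG(X)_\fin$ is correctly identified so that "the image of $\rho$ on it is bounded" is legitimate (this replaces the global compactness of $\SG(\BC^2)$ used in the example), and verifying that differentiability of $\rho$ at points of this $\BP^1$ genuinely yields holomorphy of the scalar function $\alpha$ in the one complex variable $\lambda$ --- i.e. that restricting a complex Fréchet-differentiable map to a one-dimensional complex submanifold and then reading off a matrix entry produces a holomorphic function. Both points are routine once the reduction to a two-dimensional subspace $Y \subseteq X$ and its $\BP^1$ of lines is set up carefully; everything else is a verbatim rerun of the $X = \BC^2$ computation via \eqref{eqt:mu-E-F} and \eqref{eqt:mu-E-F-inv}.
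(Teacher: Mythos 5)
Your overall strategy --- assume a complex differentiable global cross section $\rho$, restrict it to a compact finite-dimensional Grassmannian sitting inside $\SG(X)_\fin$, and combine boundedness with Liouville's theorem --- is the same as the paper's. But the central step as you state it would fail: when $X$ is infinite dimensional, $\rho(E_\lambda)$ is \emph{not} ``an explicit rational expression in a scalar parameter $\alpha(\lambda)$ and in $\lambda$''. By Lemma \ref{lem:I-F-affine}, the fiber $\kappa^{-1}(E_\lambda)=\CI(X)_{E_\lambda}$ is the infinite-dimensional affine space $\idp^{F}_{E_\lambda}+\CL^{E_\lambda}(X,E_\lambda)$: the range of $\rho(E_\lambda)$ is indeed the line $E_\lambda\subseteq Y$, but its kernel is an arbitrary closed hyperplane of $X$, which has no reason to contain your complement $Z$ of $Y$ and is not recorded by a single scalar. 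So the $2\times 2$ formula of Example \ref{eg:I(C2)}, and with it the three bounded functions $\alpha(\lambda)$, $\alpha(\lambda)/\lambda$ and $\lambda(1-\alpha(\lambda))$, do not describe $\rho(E_\lambda)$ as written, and the chart \eqref{eqt:mu-E-F-inv} you invoke has an infinite-dimensional second coordinate $S$ that your bookkeeping ignores.

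The argument is repairable, in two ways. First, the \emph{compression} $\rho(E_\lambda)|_Y$ is a rank-one idempotent of $Y\cong\BC^2$ with range $E_\lambda$ (one has $\rho(E_\lambda)(Y)=E_\lambda\subseteq Y$, and idempotency passes to the restriction), so it \emph{does} have the form of Example \ref{eg:I(C2)} for some scalar $\alpha(\lambda)$; its entries are bounded (compactness of $\SG(Y)$) and holomorphic on $\BC\setminus\{0\}$, and your contradiction then goes through. Second --- and this is what the paper does, needing no explicit matrix form at all --- one applies Liouville directly to the scalar functions $\lambda\mapsto f\big(\rho\big(p_{E_0,F_0}^{-1}(\lambda T)\big)(x)\big)$ for arbitrary $x\in X$ and $f\in X^*$: these are entire and bounded, because the whole line $\{(I+\lambda T)(E_0):\lambda\in\BC\}$ lies in the compact set $\SG(Y)$ with $Y:=E_0+T(E_0)$ finite dimensional; hence each is constant, so $\rho$ is constant along the line, contradicting $\kappa\circ\rho=\id$. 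The paper's version also works for $E_0$ of arbitrary finite dimension rather than only for lines in a two-dimensional $Y$, though that extra generality is not needed for the contradiction.
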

\begin{proof}
Suppose on the contrary that $(\CI(X)_\fin, \SG(X)_\fin, \kappa)$
is a locally trivial complex differentiable Banach bundle. 
Then there is a complex differentiable global cross section $\rho: \SG(X)_\fin\to \CI(X)_\fin$. 
Consider $E_0\in \SG(X)_\fin$ and $F_0\in \CC_{E_0}$. 

Fix an operator $T\in \CL(E_0, F_0)$ and put $Y:= E_0 + T(E_0)$. 
Then $Y$ is a finite dimensional subspace. 
For any $\lambda\in \BC$, one has 
$$p_{E_0,F_0}^{-1}(\lambda T) = (I + \lambda T)(E_0)\subseteq Y,$$
where $p_{E_0,F_0}$ is as in \eqref{eqt:defn-p-E-F}. 
Hence, $\{p_{E_0,F_0}^{-1}(\lambda T):\lambda\in \BC \}\subseteq \SG(Y)\subseteq \SG(X)_\fin$. 

Choose any $x\in X$ and any $f$ in the dual space, $X^*$, of $X$. 
As the inclusion map $\iota:\CI(X)\to \CL(X)$ is complex analytic, we know that the function $\chi$ on $\BC$ defined by 
$$\chi(\lambda) := f\big(\rho\big(p_{E_0,F_0}^{-1}(\lambda T)\big)(x)\big)$$ 
is holomorphic (since \eqref{eqt:atlas-Upm} is an analytic atlas). 
On the other hand, as $\SG(Y)$ is compact and $\rho|_{\SG(Y)}$ is continuous, we know that 
$${\sup}_{E\in \SG(Y)} \|\rho(E)\| < \infty. $$
This means that $\chi$ is bounded and hence it should be a constant function. 

Since $x$ and $f$ are arbitrarily chosen, we know that $\lambda \mapsto \rho\big(p_{E_0,F_0}^{-1}(\lambda T)\big)$ is a constant map, for each fixed $T\in \CL(E_0,F_0)$. 
Consequently, the map $\rho\circ p_{E_0,F_0}^{-1}:\CL(E_0, F_0)\to \CI(X)$ is constant (since all the rays pass through $0$), 
but this contradicts with the fact that $\rho$ is a cross section on $\CC_{F_0}$. 
\end{proof}

\medskip

In the following, we consider the case of (real or complex) Hilbert spaces.  

\medskip

\begin{prop}\label{prop:Hil-sp-Ban-bun}
If $H$ is a complex Hilbert space, then $(\CI(H), \SG(H), \kappa)$ is a locally trivial real analytic complex Banach bundle, under equivalent complex Banach space structures on the fibers such that the self-adjoint projection corresponding to the base point of a fiber is the zero element of the vector space structure on that fiber. 
\end{prop}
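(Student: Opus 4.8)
The plan is to invoke Proposition \ref{prop:cont-sect-F-Ban-bundle}(a) (or rather its differentiable/real-analytic analogue from Remark \ref{rem:gen-aff-Ban}(b)): since $\big(\CI(H), \SG(H), \kappa\big)$ is already a locally trivial \emph{real} analytic affine-Banach bundle by Theorem \ref{thm:main} (note $\CL(H)$ is a real analytic Banach manifold, whether or not we regard $H$ as complex), it suffices to produce a \emph{real analytic global cross section} $\rho\colon \SG(H)\to \CI(H)$ whose value at each $E\in \SG(H)$ is the (self-adjoint) orthogonal projection onto $E$. The natural candidate is $\rho(E) := \prj_E$, the orthogonal projection onto $E$, which indeed lies in $\CI(H)_E = \kappa^{-1}(E)$. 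Once $\rho$ is shown to be real analytic, Proposition \ref{prop:cont-sect-F-Ban-bundle}(b) upgrades the affine-Banach bundle to a real analytic Banach bundle under the fiberwise vector space structures centered at $\rho(E) = \prj_E$, which is exactly the assertion; the equivalence of the resulting complex Banach space structure with the original one on each fiber follows from Lemma \ref{lem:affine} together with the fact that $\CL^E(H,E)$ is a complex subspace of $\CL(H)$.

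So the real work is the real analyticity of $E\mapsto \prj_E$. First I would work in a chart: fix $E_0\in \SG(H)$ and a complement $F_0\in \CC_{E_0}$, and recall from \eqref{eqt:defn-pi-E-F}–\eqref{eqt:anal-loc-atlas} that $\pi_{E_0,F_0}\colon \CC_{F_0}\to \CL^{F_0}(H,F_0)$ is a real-analytic chart sending $E$ to $R := \idp^{F_0}_E - \idp^{F_0}_{E_0}$, with $\idp^{F_0}_E = \idp^{F_0}_{E_0} + R = (I+R)\,\idp^{F_0}_{E_0}\,(I-R)$ (using Lemma \ref{lem:square-zero} and $R(H)\subseteq F_0$, $R\,\idp^{F_0}_{E_0} = R$, as in \eqref{eqt:mu-E-F-inv-1}). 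Thus $E\mapsto \idp^{F_0}_E$ is real analytic in the chart. It then suffices to express $\prj_E$ as a real-analytic function of the (not necessarily self-adjoint) idempotent $\idp^{F_0}_E$. For this I would use the standard formula: for an idempotent $Q\in \CI(H)$ with range $E = Q(H)$, the orthogonal projection onto $E$ is
\begin{equation*}
\prj_E \;=\; Q Q^*\big(I - (Q - Q^*)^2\big)^{-1} \;=\; Q Q^*\big(Q Q^* + (I-Q^*)(I-Q)\big)^{-1},
\end{equation*}
where the operator $I - (Q-Q^*)^2 = QQ^* + (I-Q^*)(I-Q)$ is positive and invertible (this is the classical relation between an idempotent and the associated self-adjoint projection onto its range; $Q - Q^*$ is skew-adjoint, so $-(Q-Q^*)^2 \ge 0$). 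Since $Q\mapsto Q^*$ is real-linear and continuous, $Q\mapsto I-(Q-Q^*)^2$ is real analytic, inversion is real analytic on $\OGL(H)$, and multiplication in $\CL(H)$ is real analytic, the composite $Q\mapsto \prj_E$ is real analytic in $Q$. Composing with the chart gives real analyticity of $E\mapsto \prj_E$ on $\CC_{F_0}$, and since $F_0$ was arbitrary, on all of $\SG(H)$.

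The one point that requires a little care — and which I expect to be the main (though modest) obstacle — is verifying the operator identity $\prj_{Q(H)} = QQ^*\big(I-(Q-Q^*)^2\big)^{-1}$, or equivalently checking that $I-(Q-Q^*)^2$ is invertible and that the right-hand side is a self-adjoint idempotent with the correct range. One clean way: set $A := I - (Q-Q^*)^2$; expanding, $A = I - Q^2 - (Q^*)^2 + QQ^* + Q^*Q = QQ^* + Q^*Q + (I-Q)(I-Q^*) + (I-Q^*)(I-Q) - (I - Q - Q^* + \dots)$ — rather than fight the algebra, I would instead argue structurally: $Q$ commutes with $QQ^* + Q^*Q$? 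No — the cleaner route is to observe $Q A = Q Q Q^* + Q Q^* Q - \dots$; in fact the simplest argument notes that $A = QQ^* + (I - Q)^*(I-Q)$ since $I - (Q-Q^*)^2 = I - Q^2 + QQ^* + Q^*Q - (Q^*)^2 = QQ^* + (I - Q^*)(I - Q)$ using $Q^2 = Q$, $(Q^*)^2 = Q^*$; this is manifestly positive and has trivial kernel (if $Ax=0$ then $\|Q^*x\|^2 + \|(I-Q)x\|^2 = \langle Ax, x\rangle = 0$, forcing $Q^*x = 0$ and $x = Qx$, hence $x = Q x$ and $Q^* x = 0$, so $x \in (\ker Q^*) \cap Q(H) = E \cap E = E$ and also $Q^* x = 0$ means $x \perp Q(H) = E$, so $x = 0$), hence $A$ is invertible. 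Then a direct check shows $P := QQ^*A^{-1}$ satisfies $P = P^*$ (since $AQ = QQ^*Q + Q^*Q = QA$... one verifies $AQ = QA$ directly: $AQ = QQ^*Q + (I-Q^*)(I-Q)Q = QQ^*Q$ and $QA = QQ^*Q + Q(I-Q^*)(I-Q) = QQ^*Q$, so indeed $AQ = QA$), whence $A^{-1}Q = QA^{-1}$ and $P^* = A^{-1}QQ^* = QA^{-1}QQ^*$... I will present this verification carefully in the final version, but it is entirely elementary Hilbert-space operator algebra with no analytic content.
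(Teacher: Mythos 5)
Your proposal is correct and follows essentially the same route as the paper: both reduce the statement, via Proposition \ref{prop:cont-sect-F-Ban-bundle}(b), to the real analyticity of the global cross section $E\mapsto \prj_E$ given by the orthogonal projections. The only difference is that the paper simply cites \cite[Proposition 4(4)]{AM} for this analyticity, whereas you verify it directly through the (correct) identity $\prj_{Q(H)} = QQ^*\bigl(I-(Q-Q^*)^2\bigr)^{-1}$; your supporting computations check out ($A:=I-(Q-Q^*)^2\ge I$ since $Q-Q^*$ is skew-adjoint, and $AQ=QA=QQ^*Q$, though note the first summand of $QA$ is $QQ^*$, not $QQ^*Q$ as written --- the sum still equals $QQ^*Q$), so your version is a self-contained substitute for the citation.
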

\begin{proof}
For any $E\in \SG(H)$, we denote by $E^\bot$ the orthogonal complement of $E$, and set 
$$\prj_E:=\idp^{E^\bot}_E.$$ 
By the argument of Proposition \ref{prop:cont-sect-F-Ban-bundle}(b), the conclusion is obtained if one can show that $E\mapsto \prj_E$ 
is a real analytic map from $\SG(H)$ to $\CI(H)$, or equivalently from $\SG(H)$ to $\CL(H)$ (because $\CI(H)$ is a Banach submanifold of $\CL(H)$). 
However, this fact was already proved in \cite[Proposition 4(4)]{AM} (see also \cite[Proposition 4(1)]{AM}). 
\end{proof}

\medskip

Note that $E \mapsto \prj_{E}$ is never a complex analytic map from $\SG(H)$ to $\CL(H)$, because of the proof of Proposition \ref{prop:not-complex-anal}.

\medskip

When $K$ is  a real Hilbert space, one has the stronger conclusion that $(\CI(K), \SG(K), \kappa)$ can be identified with the tangent bundle of $\SG(K)$.
In fact, the total space $\mathbf{T}(\SG(K))$ of the tangent bundle  of $\SG(K)$ is the disjoint union ${\biguplus}_{E\in \SG(K)}\CL(E, E^\bot)$ of Banach spaces, equipped with an appropriate Banach manifold structure.  
By Lemma \ref{lem:I-F-affine}, for every $E\in \SG(K)$ and $T\in \CL(E, E^\bot)$, one knows that $T^*\bar \prj_{E^\bot} + \prj_{E}$ is in $\CI(X)$  (where $T^*$ is the adjoint of $T$), where $\bar \prj_{E^\bot}$ is the orthogonal projection $\prj_{E^\bot}$ regarding as a map from $K$ to $E^\bot$. 

\medskip

\begin{thm}\label{thm:Hil-tang}
Let $K$ be a real Hilbert space. 
The assignment $(E,T)\mapsto T^*\bar \prj_{E^\bot} + \prj_{E}$ induces a fiberwise affine bi-analytic bijection from $\mathbf{T}(\SG(K))$ onto $\CI(K)$, when $\CI(K)$ is equipped with the Banach submanifold structure induced from $\CL(K)$. 
\end{thm}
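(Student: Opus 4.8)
The plan is to reduce the theorem to the explicit analytic atlases already constructed. First I would recall the standard description of the tangent bundle of $\SG(K)$: using the charts $\big(\CC_{F_0}, \pi_{E_0,F_0}, \CL^{F_0}(X,F_0)\big)$ from \eqref{eqt:anal-loc-atlas}, the total space $\mathbf{T}(\SG(K))$ is trivialized over $\CC_{F_0}$ as $\CC_{F_0}\times \CL^{F_0}(K,F_0)$, and in the Hilbert-space setting one identifies $\CL^{F_0}(K,F_0)\cong \CL(E_0,F_0)$ via the restriction isomorphism $\Lambda_{E_0,F_0}$ of \eqref{eqt:defn:L-F0-X-F0}; taking $F_0 = E_0^\bot$, a point of $\mathbf{T}(\SG(K))$ over $E\in \CC_{E_0^\bot}$ is recorded as a pair $(E,T)$ with $T\in \CL(E,E^\bot)$ (using that the fiber over $E$ is canonically $\CL(E,E^\bot)$, independent of the chart). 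So I must first make precise the claim in the excerpt that $\mathbf{T}(\SG(K)) = \biguplus_{E\in\SG(K)}\CL(E,E^\bot)$ with a well-defined Banach-manifold structure, and spell out the transition maps.

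Next I would define the candidate map $\Xi\colon \mathbf{T}(\SG(K))\to \CI(K)$ by $\Xi(E,T) := T^*\bar\prj_{E^\bot} + \prj_E$. By Lemma \ref{lem:I-F-affine} with $E_0 := E$ and $F_0 := E^\bot$ (noting $\prj_E = \idp^{E^\bot}_E$ and that $T^*\bar\prj_{E^\bot}\in \CL^{E^\bot}(K,E^\bot)$ since $\mathrm{ran}\,T^*\subseteq E$ is wrong—actually $\mathrm{ran}\,T^* \subseteq E^{\bot\bot}=E$, so I should instead use $(E,T)\mapsto T^*\bar\prj_{E^\bot}+\prj_E$ and check $\mathrm{ran}(T^*\bar\prj_{E^\bot})\subseteq E^\bot$ is false—let me be careful: $T\colon E\to E^\bot$, so $T^*\colon E^\bot\to E$, hence $T^*\bar\prj_{E^\bot}\colon K\to E$; thus $T^*\bar\prj_{E^\bot}\in\CL^{E}(K,E)$ and $\Xi(E,T)=\prj_E + (\text{something in }\CL^E(K,E))$ lies in $\CI(K)_E$). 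So $\Xi$ maps into $\CI(K)$, is fiberwise affine (indeed, fiberwise \emph{linear} once $\prj_E$ is taken as the base point, by Lemma \ref{lem:I-F-affine}), and on each fiber is a bijection onto $\CI(K)_E$ with inverse $\idp\mapsto (E,\, (\bar\prj_{E^\bot}\idp^*|_E))$—this is exactly the formula announced in Theorem 2(a) of the introduction, and one checks $(\idp - \prj_E)^* = \bar\prj_{E^\bot}^{\;*}\,T^{**} = $ the operator recording $T$. This makes $\Xi$ a set-theoretic bijection; surjectivity uses that every $E'\in\SG(K)$ lies in some $\CC_{E_0^\bot}$ and every idempotent with range $E'$ arises this way.

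The remaining—and I expect main—step is bi-analyticity. I would do this chart by chart. Over $\CC_{E_0^\bot}$, compose $\Xi$ with the chart $\mu_{E_0, E_0^\bot}$ of $\CI(K)$ from \eqref{eqt:anal-atlas-IX} on the target and the tangent-bundle chart on the source, and show the resulting map $\CC_{E_0^\bot}\times\CL^{E_0^\bot}(K,E_0^\bot)\to \CL^{E_0^\bot}(K,E_0^\bot)\oplus\CL^{E_0}(K,E_0)$ is bi-analytic. Using \eqref{eqt:mu-E-F} the first coordinate is essentially $E\mapsto \idp^{E_0^\bot}_E - \prj_{E_0}$ (the identity on the base chart) and the second coordinate is $\prj_{E_0}\,\Xi(E,T)\,\idp^{E_0}_{E_0^\bot}$, which by \eqref{eqt:mu-E-F-inv} and the formulas for $\Xi$ is polynomial in $T^*$, $\prj_{E_0}$, and $\idp^{E_0^\bot}_E$; the only non-polynomial ingredient is $E\mapsto \prj_E$ (equivalently $E\mapsto \idp^{E_0^\bot}_E$ is analytic by \eqref{eqt:Xi-pi-inv}, and $E\mapsto\prj_E$ is real-analytic by Proposition \ref{prop:Hil-sp-Ban-bun} / \cite[Prop. 4]{AM}), together with the adjoint operation $T\mapsto T^*$, which is real-linear hence real-analytic. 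Thus the composite is real-analytic, and the same analysis applied to $\Xi^{-1}$ (using the explicit inverse $\mu_{E_0,E_0^\bot}^{-1}$ and the inverse formula for $\Xi$ on fibers) gives analyticity of the inverse. Finally I would remark that these local computations are compatible across overlapping charts precisely because both atlases have already been shown to be analytic atlases (Theorem \ref{thm:main} and the standard atlas of $\SG(K)$), so $\Xi$ is globally bi-analytic; the fiberwise-affine (indeed linear) assertion is immediate from Lemma \ref{lem:I-F-affine}. The subtlety to watch is the real-analyticity (not complex-analyticity, even if $K$ were allowed complex) forced by the adjoint and by $E\mapsto\prj_E$—but since $K$ is real this is exactly the category we want, and Proposition \ref{prop:not-complex-anal} confirms no better is possible.
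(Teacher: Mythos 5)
Your overall strategy coincides with the paper's: identify $\CI(K)_E - \prj_E$ with $\CL^E(K,E)\cong\CL(E^\bot,E)$ via Lemma \ref{lem:I-F-affine} and the restriction isomorphism, observe that the candidate map is fiberwise affine and bijective, and then verify real bi-analyticity chart by chart, using that $E\mapsto \prj_E$ is real analytic and that the adjoint is real-linear. The fiberwise and set-theoretic parts of your argument are fine (including your self-correction that $T^*\bar\prj_{E^\bot}\in\CL^E(K,E)$, so the image lies in $\CI(K)_E$).

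However, there is a genuine gap at exactly the point you flag and then skip: you write that a tangent vector over $E\in\CC_{E_0^\bot}$ "is recorded as a pair $(E,T)$ with $T\in\CL(E,E^\bot)$, using that the fiber over $E$ is canonically $\CL(E,E^\bot)$, independent of the chart," and you then treat $T$ as if it were the fiber coordinate of the local trivialization of $\mathbf{T}(\SG(K))$ over $\CC_{E_0^\bot}$. It is not. The trivialization over $\CC_{E_0^\bot}$ records a tangent vector at $E$ as an element of the model space $\CL^{E_0^\bot}(K,E_0^\bot)$ of the chart based at $E_0$, and the passage from the canonical representative $T\in\CL(E,E^\bot)$ to that coordinate is the derivative at $0$ of the Grassmannian transition function $p_{E_0,E_0^\bot}\circ p_{E,E^\bot}^{-1}$. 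Computing this derivative is the technical heart of the paper's proof: using Upmeier's formula $p_{E_0,E_0^\bot}\circ p_{E,E^\bot}^{-1}(S)=\bigl(AS(I-CS)^{-1}-B\bigr)D^{-1}$ one finds that the chart coordinate of $(E,T)$ is $\Psi(T)\bar\prj_{E_0}$ with $\Psi(T)=ATD^{-1}$, and the factors $A$ and $D^{-1}$ must then be expressed in terms of $\pi_{E_0,E_0^\bot}(E)$ and the projections $\prj_E$, $\prj_{E_0}$ (Relations \eqref{eqt:DE} and \eqref{eqt:CE-circ-P}, leading to \eqref{eqt:delta-E0}). Without this computation, your assertion that the composite with $\mu_{E_0,E_0^\bot}$ is "polynomial in $T^*$, $\prj_{E_0}$, and $\idp^{E_0^\bot}_E$" is unjustified: the actual formula carries the extra conjugating factors coming from $A$ and $D^{-1}$, and it is only after identifying these factors as analytic functions of $R=\pi_{E_0,E_0^\bot}(E)$ (and using real analyticity of $E\mapsto\prj_E$) that bi-analyticity of both the map and its inverse follows. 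You need to supply this transition-derivative computation to close the argument.
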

\begin{proof}
For any $E\in \SG(K)$ and $F\in \CC_E$, we can identify, via the Banach space isomorphism $\Lambda_{F,E}$ as in \eqref{eqt:defn:L-F0-X-F0}, 
\begin{equation}\label{eqt:id-LE}
\CL(F, E)\cong \CL^{E}(K,E);
\end{equation}
in this case, $T\in \CL(F,E)$ is identified with $T\bar \idp^E_F$ (note that $\bar\idp^E_{E^\bot} = \bar \prj_{E^\bot}$).
We may sometime regard $T\in \CL(F,E)$ as an operator from $F$ to $K$ as well. 

Let us set
$$\hat \CI(K):={\biguplus}_{E\in \SG(K)}\CL(E^\bot, E)$$
and denote by $\hat \kappa:\hat \CI(K)\to \SG(K)$ the map that sends $T\in \CL(E^\bot, E)$ to $E$. 

Through identification in \eqref{eqt:id-LE}, together with the equality 
$$\CL^E(K, E)=\CI(H)_E-\prj_E\qquad (E\in \SG(K)),$$ 
one may equate $(\CI (K), \SG(K), \kappa)$ with $(\hat \CI(K), \SG(K), \hat\kappa)$, and obtains a Banach manifold structure on $\hat \CI(K)$.
In this case, the following is an analytic atlas for this structure on $\hat \CI(K)$: 
$$\big\{\hat \kappa^{-1}(\CC_{E_0^\bot}), \hat\mu_{E_0}, \CL^{E_0^\bot}(K,E_0^\bot)\oplus \CL^{E_0}(K,E_0): E_0\in \SG(K)\big\},$$ 
where
\begin{equation}\label{eqt:defn-ti-mu}
\hat \mu_{E_0}(T): = \big(\idp^{E_0^\bot}_{E} - \prj_{E_0},  \prj_{E_0}(T\circ \prj_{E^\bot} +\prj_{E})\prj_{E_0^\bot}\big) \qquad \big(E\in \CC_{E_0^\bot}, T\in \CL(E^\bot,E)\big)
\end{equation}
(c.f. \eqref{eqt:mu-E-F}). 
Moreover, for $R\in \CL^{E_0^\bot}(K,E_0^\bot)$ and $S\in \CL^{E_0}(K,E_0)$, we have, via \eqref{eqt:mu-E-F-inv-1},
\begin{equation}\label{eqt:hat-mu-inv}
\hat \mu_{E_0}^{-1}(R,S) = (I+R)\big(S + \prj_{E_0}\big)(I-R)|_{(I+R)(E_0)^\bot}.
\end{equation}
In order to verify this proposition, it suffices to show that the fiberwise linear map 
$$\Phi: \hat \CI(K)\to \mathbf{T}(\SG(K))$$ 
that sends an operator to its adjoint is bi-analytic.  

To do this, let us fix $E_0\in \SG(K)$. 
Consider $E_1\in \CC_{E_0^\bot}$ and $T\in \CL(E_1,E_1^\bot)$.
We define 
$$\gamma(s):= p_{E_1,E_1^\bot}^{-1}(s T)\qquad (s\in (-1,1)).$$
We also define
\begin{equation}\label{eqt:def-delta}
\nu_{E_0}(T) := \big(\pi_{E_0,E_0^\bot}(E_1), \Psi(T)\bar \prj_{E_0}\big),
\end{equation}
where $\Psi(T) := (p_{E_0,E_0^\bot}\circ \gamma)'(0)$; i.e. the derivative of $p_{E_0,E_0^\bot}\circ \gamma$ at $0$. 
As in \cite[p.69 \& p.73]{Chu12}, 
$$\Big({\biguplus}_{E\in \CC_{E_0^\bot}}\CL(E, E^\bot), \nu_{E_0}, \CL^{E_0^\bot}(K,E_0^\bot)\times \CL^{E_0^\bot}(K,E_0^\bot) \Big)$$ 
is a local chart of $\mathbf{T}(\SG(K))$ around $E_0$.

Now, we are required to know the map $p_{E_0,E_0^\bot}\circ p_{E_1,E_1^\bot}^{-1}$, in order to express $\nu_{E_0}(T)$.
For this, we need to express the two maps $p_{E_0,E_0^\bot} \circ p_{E_1,E_0^\bot}^{-1}$ and $p_{E_1,E_0^\bot} \circ p_{E_1,E_1^\bot}^{-1}$.
Let us  put $B:= p_{E_1,E_0^\bot}(E_0)$ 
(i.e. $(I+B)(E_1) = E_0$) 
and  $C:=p_{E_1^\bot, E_1}(E_0^\bot)$  
(i.e. $(I+C)(E_1^\bot) = E_0^\bot$). 
Denote 
$$D:= I_{E_1} + B \quad \text{and} \quad A:= I_{E_1^\bot} + C,$$ 
where $I_{E_1}$ and $I_{E_1^\bot}$ are the identity maps on $E_1$ and $E_1^\bot$, respectively. 
Then $D = \big(I + \pi_{E_1,E_0^\bot}(E_0)\big)|_{E_1}$ is an operator from $E_1$ to $E_0$ and $A = \big(I + \pi_{E_1^\bot, E_1}(E_0^\bot)\big)|_{E_1^\bot}$ is an operator from $E_1^\bot$ to $E_0^\bot$. 
As in  the proof of \cite[Lemma 3.12]{Upm}, for any $S\in \CL(E_1, E_1^\bot)$, one has 
\begin{align*}
p_{E_0,E_0^\bot} \circ p_{E_1,E_0^\bot}^{-1} \circ p_{E_1,E_0^\bot} \circ p_{E_1,E_1^\bot}^{-1} (S) = \big(A S(I_{E_1} - CS)^{-1} - B\big)D^{-1}
\end{align*}
(note that in \cite{Upm}, the notation $a,b,c$ and $d$ were used, instead of $A,B,C$ and $D$).  
Hence, for $s\in (-1,1)$, we have  
$$(p_{E_0,E_0^\bot}\circ \gamma)'(s) = AT(I_{E_1} - sC T)^{-1}D^{-1} + sAT(I_{E_1} - sC T)^{-2}CTD^{-1},$$ 
which gives 
\begin{equation}\label{eqt:form-Psi}
\Psi(T) = A T D^{-1}  = (I + C\bar P_{E_1^\bot})TD^{-1}.
\end{equation}

Set $R:= \pi_{E_0,E_0^\bot}(E_1)$. 
It follows from the equality
$B\bar\idp^{E_0^\bot}_{E_1} = \pi_{E_1,E_0^\bot}(E_0)$
as well as Relation \eqref{eqt:converse-map} that $R = - B\bar\idp^{E_0^\bot}_{E_1}$. 
This means that 
\begin{equation}\label{eqt:DE}
D = (I - R)|_{E_1}, 
\end{equation}
and Lemma \ref{lem:square-zero} gives 
$D^{-1} = (I + R)|_{E_0}$.  
On the other hand, as $R= \idp^{E_0^\bot}_{E_1} - \prj_{E_0}$, we have 
\begin{align}\label{eqt:CE-circ-P}
C\bar \prj_{E_1^\bot}
& = \ \pi_{E_1^\bot,E_1}(E_0^\bot)
\ = \ \idp^{E_1}_{E_0^\bot} - \prj_{E_1^\bot}
\ = \ \prj_{E_1} - R - \prj_{E_0}.
\end{align}
From these, we conclude that 
\begin{equation}\label{eqt:delta-E0}
\nu_{E_0}(T)  = \Big(\pi_{E_0,E_0^\bot}(E_1), \big(\prj_{E_1} - \pi_{E_0,E_0^\bot}(E_1) + \prj_{E_0^\bot}\big)T \bar P_{E_1}\big(I+\pi_{E_0,E_0^\bot}(E_1)\big)\prj_{E_0}\Big).
\end{equation}

Now, the adjoint map $\Phi$ restricts to a fiberwise linear bijection 
$$\Phi_{E_0}: {\biguplus}_{E\in \CC_{E_0^\bot}}\CL(E^\bot, E) \to {\biguplus}_{E\in \CC_{E_0^\bot}}\CL(E, E^\bot).$$ 
Under the corresponding local charts of $\CI(K)$ and $\mathbf{T}(\SG(K))$ about $\CC_{E_0^\bot}$, the map $\Phi_{E_0}$ is transformed into a map that sends $(R,S)\in \CL^{E_0^\bot}(K,E_0^\bot)\oplus \CL^{E_0}(K,E_0)$ to 
$$\Big(R,(\prj_{E_0^\bot} + \prj_{(I+R)(E_0)} - R)(I-R^*)\big(S^* + \prj_{E_0}\big)(I+R^*)(I+R)\prj_{E_0}\Big)$$ 
(see  Relations \eqref{eqt:hat-mu-inv} and \eqref{eqt:delta-E0}).
This shows that $\Phi_{E_0}$ is real analytic, because $(I+R)(E_0)=\pi_{E_0,E_0^\bot}^{-1}(R)$ and the assignment $E\mapsto \prj_E$ is a real analytic map  (note that \cite[Proposition 4(4)]{AM} is also valid in the real case using the same argument). 

Conversely, consider again $E_0\in \SG(K)$ and $E_1\in \CC_{E_0^\bot}$. 
Suppose that $R\in \CL^{E_0^\bot}(K,E_0^\bot)$ satisfying $E_1= \pi_{E_0,E_0^\bot}^{-1}(R)$, and $S\in \CL^{E_0^\bot}(K,E_0^\bot)$.   
Let $A,B,C$ and $D$ be the operators as in the above.  
If $T\in \CL(E_1,E_1^\bot)$ such that $\Psi(T)\bar P_{E_0} = S$, then it follows from \eqref{eqt:form-Psi} that 
$$T = A^{-1}S|_{E_0}D.$$
Hence, we have $\nu_{E_0}^{-1}(R,S) = A^{-1}S|_{E_0}D$ (see \eqref{eqt:def-delta}). 

Since $A = \big(I + \pi_{E_1^\bot,E_1}(E_0^\bot)\big)|_{E_1^\bot}$, it follows from Lemma \ref{lem:square-zero} that $A^{-1} = \big(I - \pi_{E_1^\bot,E_1}(E_0^\bot)\big)|_{E_0^\bot}$. 
Consequently, Relations \eqref{eqt:DE} and \eqref{eqt:CE-circ-P} 
(notice that $C\bar \prj_{E_1^\bot} = \pi_{E_1^\bot,E_1}(E_0^\bot)$) 
imply
$$\nu_{E_0}^{-1}(R,S) = (I+ R + \prj_{E_0} - \prj_{E_1})S(I - R)|_{E_1}.$$
This, together with Relation \eqref{eqt:defn-ti-mu}, tells us that $\Phi_{E_0}^{-1}$ is transformed (under the corresponding local charts of $\CI(K)$ and $\mathbf{T}(\SG(K))$ near $\CC_{E_0^\bot}$) into a map sending $(R,S)\in \CL^{E_0^\bot}(K,E_0^\bot)\oplus \CL^{E_0^\bot}(K,E_0^\bot)$ to 
$$\Big(R, \prj_{E_0}\big(\prj_{E_1}(I - R^*)S^*(I + R^* + \prj_{E_0} -\prj_{E_1})\prj_{E_1^\bot} + \prj_{E_1}\big)\prj_{E_0^\bot} \Big).$$
Thus, $\Phi_{E_0}^{-1}$ is also real analytic. 
This completes the proof. 
\end{proof}

\medskip

We may also identify the tangent bundle of $\SG(K)$ with either $\Inv(K)$ or $\SG(K)\times_\CC \SG(K)$ (see the discussion following Theorem \ref{thm:main}). 
On the other hand, one can identify $\SG(K)$ with the real Banach submanifold $\Inv_\mathrm{sa}:= \{V\in \Inv(K): V^*=V \}\setminus \{I,-I\}$.

\medskip

Note that the corresponding statement of Theorem \ref{thm:Hil-tang} for general Banach spaces is in general false because $\CL(E,F)$ may not be isomorphic to $\CL(F,E)$ for $E,F\in \SG(X)$ with $E \smallT F$. 

\medskip

In the following, we will consider the case when $H$ is a $\BK$-Hilbert space. 
Let us define a map $\tau: \CI(H) \to \SG(H)\times \CL(H)$ by 
$$\tau(\idp) := (\kappa(\idp),\iota(\idp)) \qquad (\idp\in \CI(H)),$$ 
where $\iota: \CI(H)\to \CL(H)$ is the inclusion map.
Clearly, $\tau$ is a homeomorphism onto its image.  
Moreover, as in the proof of Theorem \ref{thm:main}, $\tau$ is an analytic immersion. 

\medskip

We set 
\begin{equation*}
\ti \CI(H):={\biguplus}_{E\in \SG(H)}\CL^E(H, E),
\end{equation*}
and define the bundle map $\ti \kappa : \ti \CI(H) \to \SG(H)$ canonically. 
One can see from the proofs of Proposition \ref{prop:Hil-sp-Ban-bun} and Theorem \ref{thm:Hil-tang}  that $(\ti \CI(H), \SG(H), \ti \kappa)$ is a locally trivial real analytic Banach bundle. 

\medskip

Consider  a ``$\CL(H)$-valued metric'' on $\ti \CI(H)$ given by 
\begin{equation}\label{eqt:def-oper-Riem-met}
\la S, T\ra_{\CL(H)} := ST^*, \quad \text{for any }S,T\in \ti \CI(H) \text{ with } \ti \kappa(S)=\ti \kappa(T).
\end{equation}
Notice that $\la \cdot, \cdot \ra_{\CL(H)}$ satisfies all the requirements of a metric (i.e. fiberwise inner product) except that it takes values in $\CL(H)$ instead of the scalar field. 
Note also that  
\begin{equation}\label{eqt:range-metric}
\la S, T\ra_{\CL(H)} \in \prj_{\ti \kappa(S)}\CL(H)\prj_{\ti \kappa(T)}.
\end{equation}
For each $x\in H$, we can also define 
\begin{equation*}\label{eqt:pseduo-met}
\la S, T\ra_x := \la ST^*x, x\ra_H. 
\end{equation*}
Then $\{\la \cdot, \cdot\ra_x \}_{x\in H}$ is a family of pseudo-metric on $(\ti \CI(H), \SG(X), \ti \kappa)$ which is \emph{separating}; in the sense that $S = 0$ whenever we have $\la S, S\ra_x = 0$ for all $x\in H$.  
It is easy to see, via the fiberwise linear analytic immersion induced by $\tau$, that $\la \cdot, \cdot\ra_x$ is real analytic. 
On the other hand, if $H$ is separable and $\{x_n\}_{n\in \BN}$ is a countable dense subset of $H$, then the sequence of pseudo-metric $\{\la \cdot, \cdot\ra_{x_n} \}_{n\in \BN}$ is also separating.

\medskip

Let $\SG(H)_\fin$ be the subset of $\SG(H)$ consisting of finite dimensional subspaces, and we put 
$$\ti \CI(H)_\fin:= \ti \kappa^{-1}(\SG(H)_\fin).$$ 
Then $\big(\ti \CI(H)_\fin, \SG(H)_\fin, \ti \kappa\big)$ is a locally trivial real analytic Banach bundle. 
For any $S,T\in \ti\CI(H)_\fin$ with $\ti \kappa(S) = \ti \kappa(T)$, the operator $\la S, T\ra_{\CL(H)}$ is of finite rank (see \eqref{eqt:range-metric}). 
Therefore, we can define a metric $\la \cdot, \cdot\ra_\fin$ on $\ti\CI(H)_\fin$ by  
\begin{equation}\label{eqt:def-metric-fin}
\la S, T\ra_\fin := \mathrm{Tr}(ST^*), \quad \text{for any }S,T\in \ti \CI(H)_\fin \text{ with } \ti \kappa(S)=\ti \kappa(T),
\end{equation}
where $\mathrm{Tr}$ is the canonical densely defined trace on $\CL(H)$. 

\medskip

In the particular case when $K$ is a real Hilbert space, we see from the identification as in \eqref{eqt:id-LE} that $\ti \CI(K)$ is the same as $\hat \CI(K)$. 
Hence, the above discussion, together with Theorem \ref{thm:Hil-tang}, gives the following (notice that both $(F, S)\mapsto (F, S+\prj_F)$ and $(F, R)\mapsto (F, R^*)$ are real bi-analytic maps from $\SG(K)\times \CL(K)$ to itself). 

\medskip

\begin{cor}\label{cor:tan-embed-trivial-bundle}
Let $K$ be a real Hilbert space. 

\smnoind
(a) The assignment $(E,T)\mapsto (E, T\bar \prj_{E})$ is analytic immersion from $\mathbf{T}(\SG(K))$ to the trivial Banach bundle $\big(\SG(K)\times \CL(K), \SG(K), \kappa_0\big)$ which is fiberwise linear.  
This immersion is a homeomorphism from $\mathbf{T}(\SG(K))$ onto the Banach subbundle $\big\{(E,S): E\in \SG(K);  S\in \CL^{E^\bot}(K,E^\bot)\big\}$. 

\smnoind
(b) The $\CL(K)$-valued metric $\la \cdot, \cdot \ra_{\CL(K)}$ as in \eqref{eqt:def-oper-Riem-met} induces a separating family of real analytic pseudo-metrics on $\mathbf{T}(\SG(K))$. 
If $K$ is separable, then one can find a countable separating family of real analytic pseudo-metrics on $\mathbf{T}(\SG(K))$. 

\smnoind
(c) The metric $\la \cdot, \cdot\ra_\fin$ as in \eqref{eqt:def-metric-fin} is a real analytic metric on $\mathbf{T}(\SG(K)_\fin)$.
\end{cor}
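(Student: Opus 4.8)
The plan is to derive all three parts from Theorem~\ref{thm:Hil-tang}, from the analytic immersion $\tau$ and the family of (pseudo-)metrics on $\ti\CI(K)$ set up just above, and from the fact --- valid also in the real case, by the argument of \cite[Proposition~4(4)]{AM} --- that $E\mapsto\prj_E$ is real analytic from $\SG(K)$ to $\CL(K)$. For part~(a), I would first check that, writing $\beta\colon(E,T)\mapsto T^*\bar\prj_{E^\bot}+\prj_E$ for the bi-analytic bijection of Theorem~\ref{thm:Hil-tang}, the assignment $(E,T)\mapsto(E,T\bar\prj_E)$ coincides with the composition of $\tau\circ\beta\colon\mathbf{T}(\SG(K))\to\SG(K)\times\CL(K)$ followed by the two bi-analytic self-maps $(F,S)\mapsto(F,S-\prj_F)$ and $(F,R)\mapsto(F,R^*)$ of $\SG(K)\times\CL(K)$; the only point here is the operator identity $(T^*\bar\prj_{E^\bot})^*=T\bar\prj_E$ on $K$, together with $\kappa(\beta(E,T))=E$. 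Now $\beta$ is bi-analytic, $\tau$ is an analytic immersion that is a homeomorphism onto its image, and the last two maps are bi-analytic (this is where the real analyticity of $E\mapsto\prj_E$ enters); hence the composite is an analytic immersion into $\SG(K)\times\CL(K)$ which is a homeomorphism onto its image. Fiberwise linearity is then automatic: $\beta$ is fiberwise affine and carries the zero of $\CL(E,E^\bot)$ to $\prj_E$, so subtracting $\prj_E$ turns it into a fiberwise linear map, and adjunction is linear.

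To identify the image in part~(a) I would use that $\tau(\CI(K))=\{(E,\idp):E\in\SG(K),\ \idp\in\CI(K)_E\}$ and apply Lemma~\ref{lem:I-F-affine} with $F_0=E^\bot$ (so that $\idp^{F_0}_{E}=\prj_E$), which gives $\CI(K)_E-\prj_E=\CL^E(K,E)$; since $R\mapsto R^*$ carries $\CL^E(K,E)$ bijectively onto $\CL^{E^\bot}(K,E^\bot)$, the image of the composite is exactly $\big\{(E,S):E\in\SG(K);\ S\in\CL^{E^\bot}(K,E^\bot)\big\}$. For part~(b) I would then transport the $\CL(K)$-valued metric $\la S,T\ra_{\CL(K)}=ST^*$ (see~\eqref{eqt:def-oper-Riem-met}) and the pseudo-metrics $\la S,T\ra_x=\la ST^*x,x\ra_K$ from $\ti\CI(K)$ to $\mathbf{T}(\SG(K))$ through the fiberwise linear bi-analytic identification of part~(a) --- equivalently, restrict the manifestly real analytic bilinear map $(S,T)\mapsto ST^*$ on $\SG(K)\times\CL(K)$ to the subbundle of~(a) and pull it back. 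Real analyticity is preserved, and the family $\{\la\cdot,\cdot\ra_x\}_{x\in K}$ stays separating because $\la S,S\ra_x=\|S^*x\|^2=0$ for all $x\in K$ forces $S=0$; in the separable case a countable dense subset of $K$ yields a countable separating subfamily by continuity.

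For part~(c), I would observe that for $E\in\SG(K)_\fin$ and $S,T$ in the fiber over $E$ (operators with range inside the finite dimensional subspace $E$, via~\eqref{eqt:id-LE}) the operator $ST^*$ has finite rank, so $\mathrm{Tr}(ST^*)$ is defined and $\la\cdot,\cdot\ra_\fin=\mathrm{Tr}\circ\la\cdot,\cdot\ra_{\CL(K)}$ (see~\eqref{eqt:def-metric-fin}). On a local chart over $\CC_{E_0^\bot}\cap\SG(K)_\fin$ every base point has the same dimension as $E_0$, so $|\mathrm{Tr}(A)|\le(\dim E_0)\,\|A\|$ for the operators $A$ arising there; thus $\mathrm{Tr}$ is, locally, the restriction of a bounded linear functional, hence real analytic, and composing with the real analytic $\la\cdot,\cdot\ra_{\CL(K)}$ of part~(b) makes $\la\cdot,\cdot\ra_\fin$ real analytic, while $\la S,S\ra_\fin=\mathrm{Tr}(SS^*)\ge0$ with equality only for $S=0$ gives positive definiteness. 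The heavy lifting is all in Theorem~\ref{thm:Hil-tang} and the discussion preceding this corollary; the two things that need care are the factorization in~(a) --- keeping accurate track of the adjoint and of the shift by $\prj_E$ --- and, in~(c), the local constancy of $\dim E$, which is what makes $\mathrm{Tr}$ a locally bounded linear functional.
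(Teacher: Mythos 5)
Your proposal is correct and follows essentially the same route as the paper, which derives the corollary from Theorem \ref{thm:Hil-tang}, the immersion $\tau$ and the metrics introduced just before it, explicitly noting (as you do) that $(F,S)\mapsto(F,S+\prj_F)$ and $(F,R)\mapsto(F,R^*)$ are real bi-analytic self-maps of $\SG(K)\times\CL(K)$. You simply spell out the factorization, the identity $(T^*\bar\prj_{E^\bot})^*=T\bar\prj_E$, and the image identification in more detail than the paper does.
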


\medskip

Note that since all the tangent spaces of $\SG(K)_\fin$ are isomorphic to Hilbert spaces, it is already known that a real analytic metric exists on $\mathbf{T}(\SG(K)_\fin)$. 
The above gives an explicit construction of such a metric.

\medskip

Notice also that the fiber of the Banach bundle $\hat \CI(K)$ over $E\in \SG(K)$ is a Hilbert $\CL(E^\bot)$-module with inner product $\la S, T\ra_0 := S^*T$ (in the usual convention, a Hilbert $C^*$-module is a right module and the operator-valued inner product is conjugate linear in the first variable). 
One may use it to define another family of pseudo-metrics on $\mathbf{T}(\SG(K))$.

\medskip

\section{$\CI(X)$ as a disjoint union of homogeneous spaces}

\medskip

In the case when $X$ is finite dimensional, it is well-known that $\SG(X)$ can be identified with a disjoint union of quotients of $\GL(X)$ by closed Lie subgroups. 
The corresponding fact for $\CI(X)$ may also be known. 
We are going to look at the infinite dimensional situation. 

\medskip

In the following, we will consider the analytic actions $\Ad$ and $\alpha$ of $\GL(X)$ on $\CI(X)$ and $\SG(X)$, respectively (see \eqref{eqt:defn-Ad-act} and \eqref{eqt:def-alpha}). 
We have already constructed in the proof of Theorem \ref{thm:main} an analytic local right inverse $\Xi_{E_0,F_0}$ (see \eqref{eqt:Xi-pi-inv}) for the evaluation map at $E_0$ from $\GL(X)$ to the orbit $\alpha(\GL(X), E_0)$, for every $E_0\in \SG(X)$.  
The following lemma gives  an analytic local right inverse for $\Ad$. 

\medskip

\begin{lem}\label{lem:right-inverse-Ad}
For $(E_0,F_0)\in \SG(X)\times_\CC \SG(X)$, the map from $\kappa^{-1}(\CC_{F_0})$ to $\GL(X)$ defined by 
$$\check \Xi_{E_0,F_0}(\idp^F_E) := \big(\idp^E_F+ \idp^{F_0}_E\big)\big(\idp^{F_0}_E+ \idp^{E_0}_{F_0}\big) \qquad (\idp^F_E\in \kappa^{-1}(\CC_{F_0}))$$
is an analytic local right inverse 
for the evaluation map at $\idp^{F_0}_{E_0}$ from $\GL(X)$ onto the orbit of $\idp^{F_0}_{E_0}$ under the action $\Ad$.
\end{lem}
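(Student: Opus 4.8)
The plan is to verify directly that $\check\Xi_{E_0,F_0}$ has two properties: (i) it takes values in $\GL(X)$ and is analytic as a map into $\GL(X)$; (ii) it is a right inverse for the evaluation $\ivt \mapsto \Ad(\ivt)(\idp^{F_0}_{E_0})$ restricted to the $\Ad$-orbit of $\idp^{F_0}_{E_0}$. The key structural observation is that $\check\Xi_{E_0,F_0}$ is a product of two factors, each of which we already understand: the second factor is precisely $\Xi_{E_0,F_0}(E)$ from \eqref{eqt:def-Xi} (here $E = \idp^F_E(X)$), and the first factor $\idp^E_F + \idp^{F_0}_E$ is a similar ``sum of two complementary idempotents with a common summand'' which, by Lemma \ref{lem:square-zero} applied to an appropriate square-zero operator, is invertible with an explicitly computable inverse. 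So the invertibility and analyticity in (i) reduce to combining the already-established invertibility and analyticity of $\Xi_{E_0,F_0}$ (see \eqref{eqt:Xi-inv} and \eqref{eqt:Xi-pi-inv}) with the analogous facts for the first factor, where analyticity follows because $\idp^E_F$ depends analytically on $(E,F)$ through the chart $\mu_{E_0,F_0}$ of Theorem \ref{thm:main} (cf. \eqref{eqt:mu-E-F}, \eqref{eqt:mu-E-F-inv}) and $\idp^{F_0}_E$ depends analytically on $E$ through $\Xi_{E_0,F_0}$.

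For (ii), the computation is a direct one using \eqref{eqt:defn-Ad-act}: one checks that conjugating $\idp^{F_0}_{E_0}$ by the second factor $\Xi_{E_0,F_0}(E)$ sends it to $\idp^{(\idp^{F_0}_E + \idp^{E_0}_{F_0})(F_0)}_E$ (using \eqref{eqt:Xi-E0}, which already records that this factor maps $E_0$ to $E$), and then conjugating the result by the first factor $\idp^E_F + \idp^{F_0}_E$ fixes the range $E$ and moves the kernel to $F$. The cleanest way to organize this is to verify the two identities
$$\check\Xi_{E_0,F_0}(\idp^F_E)(E_0) = E \quad\text{and}\quad \check\Xi_{E_0,F_0}(\idp^F_E)(F_0) = F,$$
which together with \eqref{eqt:defn-Ad-act} give $\Ad\big(\check\Xi_{E_0,F_0}(\idp^F_E)\big)(\idp^{F_0}_{E_0}) = \idp^F_E$; the first identity is immediate since the second factor sends $E_0$ to $E$ and the first factor (having range $E$) sends $E$ into $E$ bijectively, and for the second identity one tracks $F_0$ through both factors, noting that the second factor sends $F_0$ onto $\idp^{F_0}_E(F_0)$-type data that the first factor then carries onto $F$. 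This shows $\check\Xi_{E_0,F_0}$ lands in the $\Ad$-orbit of $\idp^{F_0}_{E_0}$ and is a section of the evaluation map there.

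The main obstacle I expect is purely bookkeeping rather than conceptual: one must be careful about the domains and codomains of the various idempotents $\idp^F_E$, $\idp^E_F$, $\idp^{F_0}_E$, $\idp^{E_0}_{F_0}$ (all regarded as operators on $X$) and keep straight which subspaces are complementary to which — in particular that $\idp^F_E$ lies in $\kappa^{-1}(\CC_{F_0})$ forces $E \in \CC_{F_0}$, so that $\idp^{F_0}_E$ and $\idp^E_{F_0}$ make sense, while $F \smallT E$ separately guarantees $\idp^E_F$ makes sense. Once these membership conditions are laid out at the start, each of the invertibility claim, the analyticity claim, and the two range/kernel identities is a short direct verification using Lemma \ref{lem:square-zero}, \eqref{eqt:defn-Ad-act}, \eqref{eqt:Xi-E0}, and the analytic atlas \eqref{eqt:anal-atlas-IX}.
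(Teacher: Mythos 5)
Your proposal is correct and takes essentially the same route as the paper: the paper likewise obtains invertibility of the two factors from Lemma \ref{lem:square-zero}, verifies the two identities $\check \Xi_{E_0,F_0}(\idp^F_E)(E_0) = E$ and $\check \Xi_{E_0,F_0}(\idp^F_E)(F_0)=F$ to conclude the right-inverse property via \eqref{eqt:defn-Ad-act}, and proves analyticity by writing $\check \Xi_{E_0,F_0}\circ\mu_{E_0,F_0}^{-1}(R,S)$ as an explicit polynomial expression in $(R,S)$, which is just the concrete form of your chart argument.
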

\begin{proof}
Consider $\idp^F_E\in \kappa^{-1}(\CC_{F_0})$.
We know that both $\idp^{F_0}_E+ \idp^{E_0}_{F_0}$ and $\idp^E_F+ \idp^{F_0}_E$ are invertible (see Lemmas \ref{lem:square-zero} and \ref{lem:I-F-affine}). 
Moreover, one easily check that 
$$\check \Xi_{E_0,F_0}(\idp^F_E)(E_0) = E \quad \text{and} \quad \check \Xi_{E_0,F_0}(\idp^F_E)(F_0)=F.$$ 
This implies that $\check \Xi_{E_0,F_0}: \kappa^{-1}(\CC_{F_0}) \to \GL(X)$ is a local right inverse for the evaluation map. 

Suppose that $(R,S)=\mu_{E_0,F_0}(\idp^F_E)$. 
It follows from \eqref{eqt:mu-E-F} and \eqref{eqt:mu-E-F-inv-1} that 
$$\idp^F_E = (I+R)(S+\idp^{F_0}_{E_0})(I-R)$$ 
and $R = \idp^{F_0}_E + \idp^{E_0}_{F_0} - I$. 
Consequently, 
$$\check \Xi_{E_0,F_0}\big(\mu_{E_0,F_0}^{-1}(R,S)\big) = \big(I - (I+R)(S+\idp^{F_0}_{E_0})(I-R)+R + \idp^{F_0}_{E_0}\big)(R+I),$$
which is analytic as required.  	
\end{proof}

\medskip

The existences of local analytic right inverses for the evaluation maps from $\GL(X)$ to the orbits of $\alpha$ and $\Ad$, respectively, imply that these two actions are locally transitive in the sense of \cite[Definition 8.20]{Upm}.
For any $\idp\in \CI(X)$ as well as $E\in \SG(X)$, let us set 
$$\GL(X)^\idp:= \{ \ivt\in \GL(X): \ivt \idp \ivt^{-1} = \idp\}\quad 
\text{and}\quad 
\GL(X)^E:= \{ \ivt\in \GL(X): \ivt E = E\}.$$ 
\cite[Proposition 8.21]{Upm} produces the following result.

\medskip

\begin{prop}\label{prop:id-homog}
Let $X$ be a $\BK$-Banach space. 
	Suppose that $\idp\in \CI(X)$ and $E\in \SG(X)$. 
Then both $\GL(X)^\idp$ and $\GL(X)^E$ are analytic $\BK$-Banach Lie subgroups of $\GL(X)$. 
The orbits $\Ad(\GL(X),\idp)$ and $\alpha(\GL(X), E)$ are clopen in $\CI(X)$ and $\SG(X)$, respectively. 
Moreover, the canonical bijections from, respectively, $\GL(X)/\GL(X)^\idp$ and  $\GL(X)/\GL(X)^E$ onto $\Ad(\GL(X),\idp)$  and $\alpha(\GL(X), E)$ are $\BK$-bi-analytic. 
\end{prop}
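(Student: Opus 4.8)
The plan is to obtain the entire statement as an application of Upmeier's theory of locally transitive analytic actions of Banach Lie groups; concretely, of \cite[Proposition 8.21]{Upm}. The two halves of the proposition --- the action $\alpha$ on $\SG(X)$ and the action $\Ad$ on $\CI(X)$ --- are treated identically; the only thing that changes is which local right inverse of an orbit map one feeds into that proposition, and both of these have in fact already been produced.

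Indeed, in the proof of Theorem \ref{thm:main} we constructed, for each $E_0\in \SG(X)$ and each $F_0\in \CC_{E_0}$, an analytic map $\Xi_{E_0,F_0}: \CC_{F_0}\to \GL(X)$ with $\alpha(\Xi_{E_0,F_0}(E),E_0)=E$ for every $E\in \CC_{F_0}$ (Relation \eqref{eqt:Xi-E0}); thus $\Xi_{E_0,F_0}$ is an analytic local right inverse, on the open neighbourhood $\CC_{F_0}$ of $E_0$, of the evaluation map $\ivt\mapsto \alpha(\ivt,E_0)$. Likewise, Lemma \ref{lem:right-inverse-Ad} furnishes an analytic map $\check\Xi_{E_0,F_0}: \kappa^{-1}(\CC_{F_0})\to \GL(X)$ that is an analytic local right inverse, on the open neighbourhood $\kappa^{-1}(\CC_{F_0})$ of $\idp^{F_0}_{E_0}$, of $\ivt\mapsto \Ad(\ivt)(\idp^{F_0}_{E_0})$. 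Since every element of $\SG(X)$ arises as such an $E_0$, and every $\idp\in \CI(X)$ is of the form $\idp^{F_0}_{E_0}$ with $E_0=\idp(X)$, $F_0=\ker\idp$ and $F_0\smallT E_0$, such local right inverses are available at every point. Left translation then upgrades a local right inverse at one point of an orbit to one at every point of that orbit (if $E_1=\ivt_1(E_0)$, the map $E\mapsto \ivt_1\Xi_{E_0,F_0}(\ivt_1^{-1}(E))$ works on the open set $\ivt_1(\CC_{F_0})$, and similarly for $\Ad$), so both $\alpha$ and $\Ad$ are \emph{locally transitive} in the sense of \cite[Definition 8.20]{Upm}.

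Now \cite[Proposition 8.21]{Upm} applies and yields at once: the stabilizers $\GL(X)^\idp$ and $\GL(X)^E$ are analytic $\BK$-Banach Lie subgroups of $\GL(X)$; the orbits $\Ad(\GL(X),\idp)$ and $\alpha(\GL(X),E)$ are analytic submanifolds; and the canonical bijections $\GL(X)/\GL(X)^\idp\to \Ad(\GL(X),\idp)$ and $\GL(X)/\GL(X)^E\to \alpha(\GL(X),E)$ are $\BK$-bi-analytic. For the clopenness it is enough to note that $\CC_{F_0}$ is open in $\SG(X)$ and, by \eqref{eqt:Xi-E0}, is contained in $\alpha(\GL(X),E_0)$; translating by the elements of $\GL(X)$ (which act as homeomorphisms of $\SG(X)$) then shows the orbit $\alpha(\GL(X),E_0)$ is a neighbourhood of each of its points, hence open, and the analogous observation $\kappa^{-1}(\CC_{F_0})\subseteq \Ad(\GL(X),\idp^{F_0}_{E_0})$ --- immediate from Lemma \ref{lem:right-inverse-Ad}, using that $\kappa^{-1}(\CC_{F_0})$ is open since $\kappa$ is continuous --- handles $\Ad$. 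As the orbits of each action partition the corresponding space and every orbit is open, each orbit is also closed.

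Most of the work has thus already been done --- the substantive constructions are the analytic right inverses $\Xi_{E_0,F_0}$ and $\check\Xi_{E_0,F_0}$ --- and the only steps that require genuine care are verifying that ``analytic local right inverse of an orbit map at the base point'' really matches the hypothesis of \cite[Proposition 8.21]{Upm} (the left-translation argument above), and then observing that the submanifold orbits produced by Upmeier are in fact \emph{open} subsets, for which one uses precisely the inclusions $\CC_{F_0}\subseteq\alpha(\GL(X),E_0)$ and $\kappa^{-1}(\CC_{F_0})\subseteq\Ad(\GL(X),\idp^{F_0}_{E_0})$.
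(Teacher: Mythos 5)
Your argument is correct and is essentially the paper's own: the paper likewise deduces local transitivity of $\alpha$ and $\Ad$ from the analytic local right inverses $\Xi_{E_0,F_0}$ and $\check\Xi_{E_0,F_0}$ and then cites \cite[Proposition 8.21]{Upm} for the whole statement. Your explicit left-translation step and the openness argument via $\CC_{F_0}\subseteq\alpha(\GL(X),E_0)$ and $\kappa^{-1}(\CC_{F_0})\subseteq\Ad(\GL(X),\idp^{F_0}_{E_0})$ are just spelled-out versions of details the paper leaves implicit.
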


\medskip

For any $W\in \GL(X)$, we denote by $[W]_{\idp}$ and $[W]_{E}$ the images of $W$ in  $\GL(X)/\GL(X)^\idp$ and $\GL(X)/\GL(X)^E$, respectively.  

\medskip

\begin{cor}\label{cor:I(X)-union-homog-sp}
(a) The $\BK$-Banach submanifold $\CI(X)$ of $\CL(X)$ can be identified with a disjoint union of homogeneous spaces of the form $\GL(X)/\GL(X)^\idp$ for some $\idp\in \CI(X)$, via the map $\Sigma_\idp: \GL(X)/\GL(X)^\idp \to \CI(X)$ given by $\Sigma_\idp\big([W]_{\idp}\big) := W\idp W^{-1}$. 

\smnoind
(b) The $\BK$-Banach manifold $\SG(X)$ can be identified with a disjoint union of homogeneous spaces of the form $\GL(X)/\GL(X)^E$ for some $E\in \SG(X)$, via the map $\Sigma^E: \GL(X)/\GL(X)^E \to \SG(X)$ given by $\Sigma^E\big([W]_{E}\big) := WE$. 

\smnoind
(c) Let $\idp\in \CI(X)$. 
The assignment $\upsilon_0: [W]_{\idp}\mapsto [W]_{\idp(X)}$ is a well-defined $\BK$-analytic map from $\GL(X)/\GL(X)^\idp$ to $\GL(X)/\GL(X)^{\idp(X)}$ such that $\kappa\circ \Sigma_Q = \Sigma^{\idp(X)}\circ \upsilon_0$. 
\end{cor}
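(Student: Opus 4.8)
The plan is to deduce the whole statement from Proposition \ref{prop:id-homog} (which itself rests on \cite[Proposition 8.21]{Upm}), together with the analyticity of $\kappa$ established in Theorem \ref{thm:main}.

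Parts (a) and (b) require almost no work. The orbits of the action $\Ad$ partition $\CI(X)$, and by Proposition \ref{prop:id-homog} each such orbit is clopen in $\CI(X)$ and is $\BK$-bi-analytically identified with $\GL(X)/\GL(X)^\idp$ through exactly the map $\Sigma_\idp\colon [W]_\idp \mapsto W\idp W^{-1}$. Choosing one idempotent in each orbit therefore exhibits the $\BK$-Banach submanifold $\CI(X)$ as the coproduct (disjoint union) of the homogeneous spaces $\GL(X)/\GL(X)^\idp$, with $\Sigma_\idp$ the identifying map on each piece. Part (b) is the verbatim translation of this argument, with $\Ad$ replaced by the action $\alpha$ of \eqref{eqt:def-alpha}, the stabilizer $\GL(X)^\idp$ by $\GL(X)^E$, and $\Sigma_\idp$ by $\Sigma^E$.

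For (c) I would proceed in three short steps. \textbf{Well-definedness of $\upsilon_0$.} If $V\in \GL(X)^\idp$, i.e. $V\idp = \idp V$, then $V^{-1}$ also commutes with $\idp$, and since both $V$ and $V^{-1}$ then map $\idp(X)$ into itself one gets $V(\idp(X)) = \idp(X)$; hence $\GL(X)^\idp \subseteq \GL(X)^{\idp(X)}$, so $[W]_\idp = [W']_\idp$ forces $[W]_{\idp(X)} = [W']_{\idp(X)}$. \textbf{The intertwining relation.} Evaluating on a representative and using \eqref{eqt:def-kappa} and \eqref{eqt:defn-Ad-act}, one has $\kappa\big(\Sigma_\idp([W]_\idp)\big) = (W\idp W^{-1})(X) = W(\idp(X)) = \Sigma^{\idp(X)}\big(\upsilon_0([W]_\idp)\big)$. \textbf{Analyticity.} The orbits $\Ad(\GL(X),\idp)$ and $\alpha(\GL(X),\idp(X))$ are clopen, hence open $\BK$-Banach submanifolds of $\CI(X)$ and $\SG(X)$ respectively, and by the previous computation $\kappa$ carries the first onto the second; thus $\kappa$ restricts to a $\BK$-analytic map between them (Theorem \ref{thm:main}), and consequently $\upsilon_0 = (\Sigma^{\idp(X)})^{-1}\circ \kappa\circ \Sigma_\idp$ is a composite of $\BK$-analytic maps, using the bi-analyticity of $\Sigma_\idp$ and $\Sigma^{\idp(X)}$ from Proposition \ref{prop:id-homog}.

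I do not anticipate a genuine obstacle. The only point that needs a moment's care is the last one, namely checking that $\kappa$ really does restrict to a well-defined analytic map between the two clopen orbits, so that the formula for $\upsilon_0$ as a composition is legitimate; but this is immediate once one observes that clopen orbits are open submanifolds, on which the restriction of an analytic map is again analytic.
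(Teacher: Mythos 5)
Your proposal is correct and follows the same route the paper intends: the corollary is stated as an immediate consequence of Proposition \ref{prop:id-homog} (with no separate proof given), and your verification of the stabilizer inclusion $\GL(X)^\idp\subseteq \GL(X)^{\idp(X)}$, the intertwining identity, and the factorization $\upsilon_0=(\Sigma^{\idp(X)})^{-1}\circ\kappa\circ\Sigma_\idp$ on the clopen orbits supplies exactly the missing details. No gaps.
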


\medskip

Our next question concerns with connected components of $\CI(X)$. 
Note that when $\GL(X)$ is connected, Proposition \ref{prop:id-homog} tells us that subsets of the form $\Ad(\GL(X),\idp)$ and $\alpha(\GL(X), E)$ are all the components of $\CI(X)$ and $\SG(X)$, respectively.
In this case, if we define an equivalence relation $\sim$ on $\SG(X)\times_\CC \SG(X)$ such that $(E_1,F_1) \sim (E_2,F_2)$ if and only if $E_1$ and $F_1$ are Banach space isomorphic to $E_2$ and $F_2$, respectively, then all the disjoint components of $\CI(X)$ are of the form 
$$\big\{\idp^F_E: (E,F)\in \SG(X)\times_\CC \SG(X) \text{ with } (E,F)\sim (E_0,F_0) \big\}$$ 
for some  $(E_0,F_0)\in \SG(X)\times_\CC \SG(X)$. 

\medskip

In the case when $H$ is a Hilbert space, Kuiper's theorem tells us that $\GL(H)$ is connected (see \cite{Kui}; see also \cite{Ill} for the case when the Hilbert space is non-separable). 
Therefore, one can determine connected components of both $\CI(H)$ and $\SG(H)$ through the dimensions of subspaces and those of their orthogonal complements. 
\medskip

However, $\GL(X)$ is in general not connected. 
Nonetheless, the above still holds for finite dimensional subspaces.
We will establish this fact in our next result.  

\medskip

\begin{prop}\label{prop:fin-dim-conn}
Suppose that $X$ is an infinite dimensional $\BK$-Banach space and $n\in \BN$. 
Then $\CI_n(X):= \{\idp\in \CI(X): \dim \idp(X) = n \}$ and $\SG_n(X):= \{E\in \SG(X):  \dim E =n \}$ are connected component of $\CI(X)$ and $\SG(X)$, respectively. 
\end{prop}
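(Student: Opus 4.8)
The plan is to show that each of $\SG_n(X)$ and $\CI_n(X)$ is nonempty, clopen, and connected; since a nonempty connected clopen subset of a topological space is automatically a connected component, this will suffice. Note that the description of $\CI(X)$ and $\SG(X)$ as unions of $\GL(X)$-orbits is of no help here, because $\GL(X)$ need not be connected; instead I would exploit the local triviality of $\kappa$ over the chart domains $\CC_F$, which are always connected, being homeomorphic to Banach spaces via the maps $p_{E,F}$.

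For the topological part: on each $\CC_F$ every subspace is a complement of $F$, hence isomorphic to $X/F$, so $E\mapsto \dim_\BK E$ is locally constant on $\SG(X)$; therefore $\SG_n(X)$ is clopen, and $\CI_n(X)=\kappa^{-1}(\SG_n(X))$ is clopen by the continuity of $\kappa$ (Theorem \ref{thm:main}). Both are nonempty since $X$ is infinite dimensional: any $n$-dimensional $E$ is complemented by some $F$, so $E\in\SG_n(X)$ and $\idp^F_E\in\CI_n(X)$. It then suffices to prove that $\CI_n(X)$ is path-connected, for then $\SG_n(X)=\kappa(\CI_n(X))$ inherits connectedness.

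To obtain path-connectedness of $\CI_n(X)$, I would take $\idp,\idp'\in\CI_n(X)$, set $E:=\idp(X)$ and $E':=\idp'(X)$ (both $n$-dimensional), and produce a single $F_0\in\SG(X)$ that is a complement of both $E$ and $E'$. Granting this, $E,E'\in\CC_{F_0}$, so $\idp,\idp'\in\kappa^{-1}(\CC_{F_0})\subseteq\CI_n(X)$, the inclusion holding because every member of $\CC_{F_0}$ has dimension $\dim_\BK X/F_0=n$. Choosing any $E_0\smallT F_0$ and using the trivialization $\Theta_{E_0,F_0}\colon\CC_{F_0}\times\CI(X)_{E_0}\to\kappa^{-1}(\CC_{F_0})$ constructed in the proof of Theorem \ref{thm:main}, the set $\kappa^{-1}(\CC_{F_0})$ is homeomorphic to the product of $\CC_{F_0}\cong\CL(E_0,F_0)$ with the convex set $\CI(X)_{E_0}$ (Lemma \ref{lem:I-F-affine}), which is path-connected; hence $\idp$ and $\idp'$ lie in one path-component of $\CI_n(X)$.

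The substantive step is thus the common complement $F_0$, which I expect to be the only part requiring thought. As $U:=E+E'$ is finite dimensional it is complemented, say $X=U\oplus G$ with $G$ closed, and it is enough to find a complement $W$ of both $E$ and $E'$ inside $U$ and set $F_0:=W\oplus G$ (this $F_0$ is closed and satisfies $X=E\oplus F_0=E'\oplus F_0$). Writing $E=(E\cap E')\oplus E_1$ and $E'=(E\cap E')\oplus E_1'$ inside $U$, one has $\dim E_1=\dim E_1'$, and for any linear isomorphism $\varphi\colon E_1\to E_1'$ the graph $W:=\{x+\varphi(x):x\in E_1\}$ is a common complement of $E$ and $E'$ in $U$, by a short check using $U=(E\cap E')\oplus E_1\oplus E_1'$. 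Everything else in the argument is routine verification.
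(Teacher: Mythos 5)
Your proposal is correct, and its central step is genuinely different from the paper's. For path-connectedness of $\CI_n(X)$ the paper never produces a common complement of the two ranges $E_1,E_2$: it first fixes a complement $F$ of the finite-dimensional space $E_1+E_2$, slides each $\idp^{F_i}_{E_i}$ within its (convex) fiber to an idempotent $\idp_i$ whose kernel contains $F$, picks an $n$-dimensional reference subspace $E_3\subseteq F$, and then joins $\idp_0:=\idp^{\check F\oplus(E_1+E_2)}_{E_3}$ to each $\idp_i$ by conjugating with an explicit rotation path $W_t\in\GL(X)$ built from an isomorphism $E_i\to E_3$. You instead construct a single $F_0$ complementing both $E$ and $E'$ (via the standard graph trick $W=\{x+\varphi(x)\}$ inside $E+E'$, padded by a complement $G$ of $E+E'$), and then observe that the entire chart $\kappa^{-1}(\CC_{F_0})\cong\CC_{F_0}\times\CI(X)_{E_0}$ is a product of a Banach space with a convex set, hence path-connected and contained in $\CI_n(X)$. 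Your route is shorter and reuses the trivialization $\Theta_{E_0,F_0}$ already built in Theorem \ref{thm:main}; the paper's rotation argument avoids the common-complement lemma at the cost of an explicit $2\times 2$-block computation. You also differ on the clopenness of $\SG_n(X)$: the paper gets it from the clopenness of $\GL(X)$-orbits (Proposition \ref{prop:id-homog}), whereas you get it more elementarily from the local constancy of $E\mapsto\dim E$ on the chart domains $\CC_F$; both are valid, and all the verifications you label routine (closedness of $W\oplus G$, $\kappa^{-1}(\CC_{F_0})\subseteq\CI_n(X)$, nonemptiness using $\dim X=\infty$) do go through.
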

\begin{proof}
In the following, for any $F, F_1, F_2\in \SG(X)$,  we will write $F = F_1\oplus F_2$ if $F_1\cap F_2 = (0)$ and $F = F_1 + F_2$. 

By Proposition \ref{prop:id-homog}, it suffices to show that $\CI_n(X)$ and $\SG_n(X)$ are connected.  
Moreover, since $\kappa$ is a continuous surjection from $\CI_n(X)$ onto $\SG_n(X)$, we only need to establish the path connectedness of $\CI_n(X)$. 
Fix $E_1, E_2\in \SG_n(X)$ as well as $F_1\in \CC_{E_1}$ and $F_2\in \CC_{E_2}$. 
Notice that $E_1+E_2$ is finite dimensional, and we fixed a subspace $F\in \CC_{E_1+E_2}$. 
There exist finite dimensional subspaces $\tilde E_1$ and $\tilde E_2$ such that 
$$E_1 \oplus \tilde E_2 = E_1 + E_2 = \tilde E_1\oplus E_2.$$ 
We know from Lemma \ref{lem:I-F-affine} that $\idp^{F_1}_{E_1}$ is joined to $\idp_1:=\idp^{F\oplus \tilde E_2}_{E_1}$ by a continuous path in $\CI(X)_{E_1}$. 
Similarly, $\idp^{F_2}_{E_2}$ is joined by a continuous path to $\idp_2:=\idp^{F\oplus \tilde E_1}_{E_2}$. 
Therefore, in order to show that $\idp^{F_1}_{E_1}$ and $\idp^{F_2}_{E_2}$ are in the same connected component, we only need to construct a continuous path joining $\idp_1$ and $\idp_2$. 

Pick a subspace $E_3$ of $F$ with dimension $n$  and choose $\check F\in \SG(F)$ with $F = E_3\oplus \check F$. 
Set $\idp_0:= \idp^{\check F \oplus (E_1+E_2)}_{E_3}$. 
Under the decomposition $X = E_3\oplus (\check F \oplus \tilde E_2) \oplus E_1$, one can write
$$\idp_0 = \begin{bmatrix}
I_{E_3} & 0 & 0\\
0 & 0 & 0\\
0 & 0 & 0
\end{bmatrix}
\quad \text{ and } \quad
\idp_1 = \begin{bmatrix}
0 & 0 & 0\\
0 & 0 & 0\\
0 & 0 & I_{E_1}
\end{bmatrix}.$$
Since $\dim E_1 = \dim E_3 = n$, there is an isomorphism $\Phi:E_1\to E_3$. 
For $t\in [0,\pi/2]$, we set 
$$W_t = \begin{bmatrix}
\cos t\cdot I_{E_3} & 0 & \sin t\cdot \Phi\\
0 & I_{\check F \oplus\tilde E_2 } & 0\\
-\sin t\cdot \Phi^{-1} & 0 & \cos t\cdot I_{E_1}
\end{bmatrix}\in \CL(X).$$
It is not hard to check that $W_t\in \GL(X)$. 
Moreover, as $W_0 \idp_0 W_0^{-1} = \idp_0$ and $W_{\pi/2}\idp_0 W_{\pi/2}^{-1} = \idp_1$, we see that $\idp_0$ and $\idp_1$ are joined by a continuous path of idempotents. 

In the same way, $\idp_0$ and $\idp_2$ are joined by a continuous path of idempotents, via the presentations of $\idp_0$ and $\idp_2$ under the decomposition $X = E_3\oplus (\check F \oplus \tilde E_1) \oplus E_2$. 
This completes the proof of the connectedness of $\CI_n(X)$. 
\end{proof}

\medskip 

In the remainder of this section, we will again look at the case of a $\BK$-Hilbert space $H$. 
For any $Q\in \CI(H)$, one can find $P\in \CI(H)$ and $W\in \GL(H)$ such that $P^* = P$ and $Q = WPW^{-1}$ (see e.g. \cite[Proposition 4.6.2]{BKD}). 
Hence, $\Ad\big(\GL(H), P\big) = \Ad\big(\GL(H), Q\big)$. 
This means that in order to study components of $\CI(H)$ it suffices to consider component generated by self-adjoint projections. 

\medskip

Let us denote $\CU(H):= \{V\in \CL(H): VV^* = I = V^*V \}$, and 
$$\CU(H)^P:= \CU(H)\cap \GL(H)^P.$$ 
It is well-known that $\CU(H)$ is a real analytic Banach manifold, and the assignment $V\mapsto VP(H)$ induces a real bi-analytic map 
from 
$\CU(H)/\CU(H)^P$ onto a clopen subset of $\SG(H)$ (see e.g. \cite[Proposition 3(3)]{AM}).

\medskip

Assume that $k:= \dim_\BK P(H) < \infty$. 
Then the image of $\CU(H)/\CU(H)^P$ in $\SG(H)$ is precisely the subset $\SG_k(H)$ of $k$-dimensional subspaces. 
Moreover, Corollary \ref{cor:I(X)-union-homog-sp}(b) and Proposition \ref{prop:fin-dim-conn} gives
$$\GL(H)/\GL(H)^{P(H)} \approx \SG_{k}(H).$$
By Proposition \ref{prop:Hil-sp-Ban-bun} as well as Corollary \ref{cor:I(X)-union-homog-sp}(c), one obtains a locally trivial real analytic $\BK$-Banach bundle 
$\big(\GL(H)/\GL(H)^P, \CU(H)/\CU(H)^P, \upsilon\big)$. 

\medskip

It is natural to ask if there is an explicit way to express this map $\upsilon$ (which is defined through several identifications). 
%
%
In the following, we will describe it via the Gram-Schmidt process.
Consider $W\in \GL(H)$. 
We are required to find $V\in \CU(H)$ satisfying 
$$WP(H) = VP(H).$$ 
Pick an orthogonal normal basis $\{\xi_1, \dots , \xi_k \}$ for the Hilbert space $P(H)$, and extend it to an orthogonal  basis $B$ of $H$. 
By applying the Gram-Schmidt process to $\{W\xi_1, \dots, W\xi_k \}$, one obtains a collection 
$\{\zeta_1, \dots, \zeta_k \}$ of orthogonal unit vectors, and we extends it to an orthogonal basis $D$ of $H$. 
Now, consider $V\in \CU(H)$ satisfying $V(B) = D$ and $V\xi_i = \zeta_i$ for $i\in \{1,\dots,k \}$. 
Then one clearly has $WP(H) = VP(H)$. 
This means that $\upsilon\big([W]_{P}\big) = [V]_{P}$. 

\medskip

In the case when $\dim H < \infty$, we can actually applies the Gram-Schmidt process to the finite subset $\{W\xi:\xi\in B \}$ and obtain an element $V\in \CU(H)$ satisfying our requirement. 
In particular, this gives the following.

\medskip

\begin{eg}\label{eg:Gram-Sch}
(a)	Consider an integer $n\geq 2$. 
	Suppose that $\idp\in \CI(\BK^n)$ with $k := \dim_\BK \idp(\BK^n)$.  
	There exists $\ivt\in \GL(\BK^n)$ such that $\ivt\idp \ivt^{-1}$ is the diagonal matrix $\prj_k$ with first $k$ entries in the diagonal being $1$ and all other entries being $0$. 
	In this case, 
	$\GL(\BK^n)^{\prj_k}= \GL(\BK^k)\times \GL(\BK^{n-k})$  and $\CU(\BK^n)^{\prj_k} = \CU(\BK^k)\times \CU(\BK^{n-k})$.
	As in the above, the map $\upsilon$ from $\GL(\BK^n)$ to $\CU(\BK^n)$ given by the Gram-Schmidt process on the column vectors of matrices produces a map 
	$$\upsilon: \GL(\BK^n)/\GL(\BK^k)\times \GL(\BK^{n-k}) \to \CU(\BK^n)/\CU(\BK^k)\times \CU(\BK^{n-k})$$ 
	and this induces a locally trivial real analytic $\BK$-Banach bundle structure on $\GL(\BK^n)/\GL(\BK^k)\times \GL(\BK^{n-k})$. 
	
\smnoind
(b) Let $n\geq 2$ and $k\in \{1,\dots,n-1\}$. 
Denote by $\OGL_n$ and $\OO_n$ the sets of all $n\times n$ real invertible matrices and orthogonal matrices, respectively.
Suppose that  
$$\upsilon: \OGL_n/\OGL_k\times \OGL_{n-k} \to \OO_n/\OO_k\times \OO_{n-k}$$ 
is the map given by the Gram-Schmidt process on the column vectors of matrices. 
Then there is a bi-analytic bijection from the tangent bundle of $\OO_n/\OO_k\times \OO_{n-k}$ onto $\OGL_n/\OGL_k\times \OGL_{n-k}$. 
\end{eg}

\medskip

\end{document}